\newtheorem{defn}{Definition}[section]
\newtheorem{cor}[defn]{Corollary}
\newtheorem{thm}[defn]{Theorem}
\newtheorem{prop}[defn]{Proposition}
\newtheorem{lemma}[defn]{Lemma}
\newtheorem{remark}[defn]{Remark}
\newtheorem{cond}[defn]{Condition}
\newcommand{\be}{\begin{equation}}
\newcommand{\ee}{\end{equation}}
\newcommand{\bea}{\begin{eqnarray}}
\newcommand{\eea}{\end{eqnarray}}
\newcommand{\beas}{\begin{eqnarray*}}
\newcommand{\eeas}{\end{eqnarray*}}
\newcommand{\R}{\mathbb{R}}
\newcommand{\ve}{\varepsilon}
\newcommand{\noi}{\noindent}
\newcommand{\goto}{\rightarrow}
\newcommand{\hsp}{\hspace{.3in}}
\newcommand{\bp}{\begin{proof}}
\newcommand{\ep}{\end{proof}}
\newcommand{\TV}{\tiny{\mbox{TV}}}
\newcommand{\mix}{\tiny{\mbox{mix}}}
\newcommand{\dstyle}{\displaystyle}
\begin{document}

\title[Path Coupling and  Aggregate Path Coupling]{Path Coupling and  Aggregate Path Coupling}

\author{Yevgeniy Kovchegov}
\address{Department of Mathematics, Oregon State University, Corvallis, OR  97331}
\email{kovchegy@math.oregonstate.edu}
\thanks{This work was partially supported by a grant from the Simons Foundation (\#284262 to Yevgeniy Kovchegov).}

\author{Peter T. Otto}
\address{Department of Mathematics, Willamette University, Salem, OR 97302,  phone: 1-503-370-6487}
\email{potto@willamette.edu}

\subjclass[2000]{Primary 60J10; Secondary 60K35}

\begin{abstract}
In this survey paper, we describe and characterize an extension to the classical path coupling method applied statistical mechanical models, referred to as aggregate path coupling.  In conjunction with large deviations estimates, we use this aggregate path coupling method to prove rapid mixing of Glauber dynamics for a large class of statistical mechanical models, including models that exhibit discontinuous phase transitions which have traditionally been more difficult to analyze rigorously.  The parameter region for rapid mixing for the generalized Curie-Weiss-Potts model is derived as a new application of the aggregate path coupling method.
\end{abstract}

\date{\today}

\maketitle

\tableofcontents

\section{Introduction}  

The theory of mixing times addresses a fundamental question that lies at the heart of statistical mechanics. How quickly does a physical system relax to equilibrium? A related problem arises in computational statistical physics concerning the accuracy of computer simulations of equilibrium data. One typically carries out such simulations by running Glauber dynamics or the closely related Metropolis algorithm, in which case the theory of mixing times allows one to quantify the running time required by the simulation.

\medskip

An important question driving the work in the field is the relationship between the mixing times of the dynamics and the equilibrium phase transition structure of the corresponding statistical mechanical models.  Many results for models that exhibit a continuous phase transition were obtained by a direct application of the standard path coupling method.  Standard path coupling \cite{BD} is a powerful tool in the theory of mixing times of Markov chains in which rapid mixing can be proved by showing that the mean coupling distance contracts between all neighboring configurations of a minimal path connecting two arbitrary configurations.

\medskip

For models that exhibit a discontinuous phase transition, the standard path coupling method fails.  In this survey paper, we show how to combine aggregate path coupling and large deviation theory to determine the mixing times of a large class of statistical mechanical models, including those that exhibit a discontinuous phase transition.  The aggregate path coupling method extends the use of the path coupling technique in the absence of contraction of the mean coupling distance between all neighboring configurations of a statistical mechanical model.  The primary objective of this survey is to characterize the assumptions required to apply this new method of aggregate path coupling.

\medskip

The manuscript is organized as follows: in Section \ref{mixtimepathcoupling}, we give a brief overview of mixing times, coupling and path coupling methods, illustrated with a new example of path coupling. Then, beginning in Section \ref{gibbs}, we introduce the class of statistical mechanical models considered in the survey.  In Sections \ref{MFBC} and \ref{genspinmodels}, we develop and characterize the theory of aggregate path coupling and apply it in Section  \ref{GCWP}, where we derive the parameter region for rapid mixing for the generalized Curie-Weiss-Potts model that was introduced recently in \cite{JKRW}.

\bigskip

\section{Mixing Times and Path Coupling} \label{mixtimepathcoupling} 

The mixing time is a measure of the convergence rate of a Markov chain to its stationary distribution and is defined in terms of 
the {\it total variation distance} between two distributions $\mu$ and $\nu$ defined by
\[ \| \mu - \nu \|_{\TV} = \sup_{A \subset \Omega} | \mu(A) - \nu(A)| = \frac{1}{2} \sum_{x \in \Omega} | \mu(x) - \nu(x)| \]
Given the convergence of the Markov chain, we define the {\it maximal distance to stationary} to be 
\[ d(t) = \max_{x \in \Omega} \| P^t(x, \cdot) - \pi \|_{\TV} \]
where $P^t(x, \cdot)$ is the transition probability of the Markov chain starting in configuration $x$ and $\pi$ is its stationary distribution.  Rather than obtaining bounds on $d(t)$, it is sometimes easier to bound the standardized maximal distance defined by 
\be
\label{eqn:StanMaxDist}
\bar{d}(t) := \max_{x,y \in \Omega} \|P^t(x, \cdot) - P^t(y, \cdot) \|_{\TV} 
\ee
which satisfies the following result.

\begin{lemma} {\em (\cite{LPW} Lemma 4.11)} \ With $d(t)$ and $\bar{d}(t)$ defined above, we have
\[ d(t) \leq \bar{d}(t) \leq 2 \, d(t). \]
\end{lemma}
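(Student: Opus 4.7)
The plan is to prove the two inequalities separately, each in a few lines, using only the triangle inequality and convexity of the total variation distance together with the stationarity of $\pi$.

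For the upper bound $\bar d(t) \leq 2 d(t)$, I would simply apply the triangle inequality to insert $\pi$: for any $x,y \in \Omega$,
\[
\|P^t(x,\cdot) - P^t(y,\cdot)\|_{\TV} \;\leq\; \|P^t(x,\cdot) - \pi\|_{\TV} + \|\pi - P^t(y,\cdot)\|_{\TV} \;\leq\; 2 d(t),
\]
and then take the maximum over $x,y$.

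For the lower bound $d(t) \leq \bar d(t)$, the key idea is to exploit stationarity. Since $\pi P^t = \pi$, I can write $\pi(\cdot) = \sum_{y \in \Omega} \pi(y) P^t(y,\cdot)$, so that for any fixed $x$,
\[
P^t(x,\cdot) - \pi(\cdot) \;=\; \sum_{y \in \Omega} \pi(y) \bigl( P^t(x,\cdot) - P^t(y,\cdot) \bigr).
\]
Taking total variation norms and using its convexity (equivalently, the triangle inequality for the $\ell^1$ representation of the TV distance), I get
\[
\|P^t(x,\cdot) - \pi\|_{\TV} \;\leq\; \sum_{y \in \Omega} \pi(y) \, \|P^t(x,\cdot) - P^t(y,\cdot)\|_{\TV} \;\leq\; \bar d(t),
\]
since each summand is at most $\bar d(t)$ and the $\pi(y)$'s sum to one. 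Taking the maximum over $x$ yields $d(t) \leq \bar d(t)$.

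There is really no serious obstacle here; the only slightly non-obvious move is the use of stationarity $\pi = \pi P^t$ to realize $\pi$ as a $\pi$-mixture of the one-step distributions $P^t(y,\cdot)$, which is exactly what allows convexity of TV to transfer a bound from pairs of starting points to the distance to $\pi$.
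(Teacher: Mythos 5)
Your proof is correct. The paper does not supply its own argument for this lemma---it simply cites Lemma 4.11 of \cite{LPW}---and your two steps (the triangle inequality through $\pi$ for $\bar{d}(t) \leq 2\,d(t)$, and stationarity $\pi = \pi P^t$ combined with convexity of the total variation distance for $d(t) \leq \bar{d}(t)$) are exactly the standard proof given in that reference.
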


\medskip

Given $\ve>0$, the {\it mixing time} of the Markov chain is defined by
\[ t_{\mix}(\ve) = \min\{ t : d(t) \leq \ve \} \smallskip \]
In the modern theory of Markov chains, the interest is in the mixing time as a function of the system size $n$ and thus, for emphasis, we will often use the notation $t_{\mix}^{(n)}(\ve)$. 
With only a handful of general techniques, rigorous analysis of mixing times is difficult and the proof of exact mixing time asymptotics (with respect to $n$) of even some basic chains remain elusive. See \cite{LPW} for a survey on the theory of mixing times. 

\medskip

Rates of mixing times are generally categorized into two groups: {\it rapid mixing} which implies that the mixing time exhibits polynomial growth with respect to the system size, and {\it slow mixing} which implies that the mixing time grows exponentially with the system size.  Determining where a model undergoes rapid mixing is of major importance, as it is in this region that the application of the dynamics is physically feasible.  

\bigskip

\subsection{Coupling Method}  

The application of coupling (and path coupling) to mixing times of Markov chains begins with the following lemma:

\begin{lemma}
Let $\mu$ and $\nu$ be two probability distributions on $\Omega$.  Then 
\[ \| \mu - \nu \|_{\text{{\em \TV}}} = \inf \{ P\{ X \neq Y \} : (X,Y) \mbox{ is a coupling of $\mu$ and $\nu$} \} \]
\end{lemma}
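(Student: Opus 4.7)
The plan is to prove the two inequalities separately, first the easy upper bound $\|\mu-\nu\|_{\TV} \leq P\{X \neq Y\}$ for every coupling, and then the matching lower bound by exhibiting an explicit optimal (``maximal'') coupling.

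For the upper bound, I would fix an arbitrary coupling $(X,Y)$ of $\mu$ and $\nu$ and an arbitrary event $A \subset \Omega$, and write
\[
\mu(A) - \nu(A) = P(X \in A) - P(Y \in A) = P(X \in A,\, Y \notin A) - P(Y \in A,\, X \notin A) \leq P(X \neq Y).
\]
Swapping the roles of $\mu$ and $\nu$ gives the same bound on $\nu(A) - \mu(A)$, so taking the supremum over $A \subset \Omega$ yields $\|\mu - \nu\|_{\TV} \leq P(X \neq Y)$. Infimizing over all couplings then gives one direction.

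For the lower bound, I would construct a coupling that achieves equality. Set $p(x) = \min\{\mu(x),\nu(x)\}$ and $\alpha = \sum_{x \in \Omega} p(x)$. The key identity is
\[
\tfrac{1}{2}\sum_{x \in \Omega} |\mu(x) - \nu(x)| = \sum_x \bigl(\mu(x) - p(x)\bigr) = 1 - \alpha,
\]
since $|\mu(x) - \nu(x)| = \mu(x) + \nu(x) - 2 p(x)$. The coupling itself: flip a biased coin with probability $\alpha$ of heads; on heads, draw $X = Y$ from the normalized distribution $p/\alpha$; on tails, draw $X$ from $(\mu - p)/(1-\alpha)$ and $Y$ from $(\nu - p)/(1-\alpha)$ independently. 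One then checks (routinely) that the marginals are correct and that $\{X \neq Y\}$ can only occur on tails, which have disjoint supports, giving $P(X \neq Y) = 1 - \alpha = \|\mu - \nu\|_{\TV}$.

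The main obstacle, if any, is the verification that the marginals of the constructed coupling are indeed $\mu$ and $\nu$; this requires checking that $\mu - p$ and $\nu - p$ are nonnegative measures with total mass $1 - \alpha$, which follows directly from the definition of $p$. All other steps are elementary manipulations with indicators and sums, and the only slightly subtle point is that the first displayed inequality uses the fact that $\{X \in A\} \setminus \{Y \in A\} \subset \{X \neq Y\}$. Once the maximal coupling is produced, the infimum in the statement is actually attained, and the proof is complete.
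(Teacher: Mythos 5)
Your proof is correct, and it is the standard argument (the one in Levin--Peres--Wilmer, Proposition 4.7) that this survey implicitly relies on: the paper states the lemma without proof, and your two steps --- the inequality $\mu(A)-\nu(A)\leq P(X\neq Y)$ for an arbitrary coupling, followed by the explicit maximal coupling built from $p(x)=\min\{\mu(x),\nu(x)\}$ showing the infimum is attained --- are exactly the expected route. The marginal check and the identity $\tfrac{1}{2}\sum_x|\mu(x)-\nu(x)|=1-\alpha$ are handled correctly, so there is nothing to add.
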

This lemma implies that the total variation distance to stationarity, and thus the mixing time, of a Markov chain can be bounded above by the probability $P(X_t \not= Y_t)$ for a coupling of the Markov chain $(X_t, Y_t)$ starting in different configurations; i.e. $(X_0,Y_0)=(\sigma, \tau)$, or if one of the coupled chains is  distributed by the stationary distribution $\pi$ for all $t$.  

\medskip

We run the coupling of the Markov chain, not necessarily independently, until they meet at time $\tau_c$. This is called the {\it coupling time}. After $\tau_c$, we run the chains together. We see that $X_t$ must have the stationary distribution for  $t \geq \tau_c$, since $X_t  = Y_t$ after coupling.

%
%

\begin{thm}[The Coupling Inequality]
Let $(X_t, Y_t)$ be a coupling of a Markov chain where $Y_t$ is distributed by the stationary distribution $\pi$.  The {\bf coupling time} of the Markov chain is defined by 
\[ \tau_{c} := \min\{t: X_t = Y_t\}. \]
Then, for all initial states $x$, 
\[ \|P^t(x,\cdot) - \pi \|_{\mbox{{\em \TV}}}  \leq P(X_t \neq Y_t) = P \ [\tau_{c} > t] \]
and thus $\tau_{\mbox{{\em \mix}}}(\ve) \leq \mathbb E[ \tau_{c} / \ve]$.
\end{thm}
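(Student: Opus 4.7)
The plan is to decompose the statement into three pieces and dispatch each in order: (i) the total variation bound $\|P^t(x,\cdot)-\pi\|_{\TV}\le P(X_t\ne Y_t)$, (ii) the identity $P(X_t\ne Y_t)=P[\tau_c>t]$, and (iii) the mixing-time bound $t_{\mix}(\ve)\le \mathbb E[\tau_c]/\ve$. The only tools needed are the coupling characterization of total variation distance (the lemma stated just above the theorem), the ``sticky'' property of the coupling (as stated in the paragraph preceding the theorem), and Markov's inequality.

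For (i), since $X_0=x$ the marginal law of $X_t$ is exactly $P^t(x,\cdot)$, and since $Y_0$ is distributed according to $\pi$ and $\pi$ is stationary, the marginal law of $Y_t$ is $\pi$ for every $t$. Thus the random pair $(X_t,Y_t)$ is a coupling of the distributions $P^t(x,\cdot)$ and $\pi$. The preceding lemma expresses $\|\mu-\nu\|_{\TV}$ as the infimum of $P(X\ne Y)$ over all such couplings, so in particular
\[
\|P^t(x,\cdot)-\pi\|_{\TV}\;\le\; P(X_t\ne Y_t).
\]
For (ii), the construction of the coupling forces $X_s=Y_s$ for every $s\ge \tau_c$ (this is exactly the prescription ``after $\tau_c$ we run the chains together''). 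Consequently $\{X_t\ne Y_t\}=\{\tau_c>t\}$ as events, yielding the stated equality.

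For (iii), apply Markov's inequality to the nonnegative random variable $\tau_c$: $P[\tau_c>t]\le \mathbb E[\tau_c]/t$. Combining with (i) and (ii),
\[
\|P^t(x,\cdot)-\pi\|_{\TV}\;\le\;\frac{\mathbb E[\tau_c]}{t}
\]
for every initial state $x$, so $d(t)\le \mathbb E[\tau_c]/t$. Choosing $t=\mathbb E[\tau_c]/\ve$ makes the right side at most $\ve$, and the definition of $t_{\mix}(\ve)$ gives $t_{\mix}(\ve)\le \mathbb E[\tau_c]/\ve=\mathbb E[\tau_c/\ve]$.

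There is no genuine obstacle in this argument, since each of the three steps is essentially one line once the right tool is invoked. The only point that deserves care is the dependence of $\tau_c$ on the starting configuration: strictly speaking one must either regard the bound as holding for each fixed $x$ (using the coupling launched from $(x,\pi)$) and take a supremum over $x$ afterward, or assume $\mathbb E[\tau_c]$ already denotes such a supremum, so that passing from $\|P^t(x,\cdot)-\pi\|_{\TV}\le \ve$ for all $x$ to $d(t)\le \ve$ is legitimate.
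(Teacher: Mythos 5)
Your proof is correct: the paper states this theorem without proof (it is the standard coupling inequality, cf.\ \cite{LPW}), and your three-step argument --- the coupling characterization of total variation applied to the pair $(X_t,Y_t)$, the sticky-coupling identity $\{X_t\neq Y_t\}=\{\tau_c>t\}$, and Markov's inequality --- is exactly the canonical argument the paper implicitly relies on, including your correct remark that one must take the supremum over the starting state $x$ to pass to $d(t)$. The only residual imprecision (that $t=\mathbb E[\tau_c]/\ve$ need not be an integer, so strictly one gets $t_{\mix}(\ve)\leq\lceil \mathbb E[\tau_c]/\ve\rceil$) is a feature of the theorem's own loose statement, not a gap in your argument.
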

From the Coupling Inequality, it is clear that in order to use the coupling method to bound the mixing time of a Markov chain, one needs to bound the coupling time for a coupling of the Markov chain starting in {\it all} pairs of initial states.  The advantage of the {\it path coupling method} described in the next section is that it only requires a bound on couplings starting in certain pairs of initial states.

\bigskip

\subsection{Path Coupling} \label{pathcoupling} 

The idea of the path coupling method is to view a coupling that starts in configurations $\sigma$ and $\tau$ as a sequence of couplings that start in {\it neighboring} configurations $(x_i, x_{i+1})$ such that $( \sigma = x_0, x_1, x_2, \ldots, x_r = \tau)$.  Then the contraction of the original coupling distance can be obtained by proving contraction between neighboring configurations which is often easier to show. 

\medskip
\noi
Let $\Omega$ be a finite sample space, and suppose $(X_t, Y_t)$ is a coupling of a Markov chain on $\Omega$.  Suppose also there is a neighborhood structure on $\Omega$, and suppose it is transitive in the following sense: for any $x$ and $y$, there is a neighbor-to-neighbor path $$x \sim x_1 \sim x_2 \sim \hdots \sim x_{r-1} \sim y,$$ 
where $u \sim v$ denotes that sites $u$ and $v$ are neighbors.\\

\noi
Let $d(x,y)$ be a metric over $\Omega$ such that $d(x,y) \geq 1$ for any $x \not= y$, and 
$$d(x,y)=\min\limits_{\rho: x \rightarrow y} \sum_{i=1}^r d(x_{i-1},x_i),$$
where the minimum is taken over all neighbor-to-neighbor paths 
$$\rho: ~x_0=x ~\sim x_1 \sim x_2 \sim \hdots \sim x_{r-1} \sim ~x_r=y$$
of any number of steps $r$. Such metric is called {\it path metric}. Next, we define the {\it diameter} of the sample space:
$$\text{diam} (\Omega)=\max\limits_{x,y \in \Omega} d(x,y).$$

\noi
Finally, the coupling construction allows us to define the {\it transportation metric} of Kantorovich (1942) as follows:
$$d_K(x,y):= E[d(X_{t+1},Y_{t+1}) ~|X_t=x,Y_t=y].$$
One can check $d_K(x,y)$ is a metric over $\Omega$.\\

\noi
Path coupling, invented by Bubley and Dyer in 1997,  is a method that employs an existing coupling construction in order to bound the mixing time from above. This method in its standard form usually requires certain metric contraction between neighbor sites.  Specifically,  we require that for any $x \sim y$,
\be \label{eqn:contr1}
d_K(x,y)=E[d(X_{t+1},Y_{t+1}) ~|X_t=x,Y_t=y] ~\leq ~\big(1-\delta(\Omega)\big) d(x,y),
\ee
where $0<\delta(\Omega)<1$ does not depend on $x$ and $y$.\\

\noi
The above contraction inequality (\ref{eqn:contr1}) has the following implication. 

\begin{prop}\label{classicalPC}
Suppose inequality {\em (\ref{eqn:contr1})} is satisfied. Then
$$t_{\text{{\em \mix}}}(\epsilon) \leq \left\lceil  {\log\text{{\em diam}} (\Omega)-\log\epsilon \over \delta(\Omega)} \right\rceil .$$
\end{prop}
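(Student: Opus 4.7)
The plan is to extend the neighbor-wise contraction $(\ref{eqn:contr1})$ to all pairs, iterate over time, and then convert the expected distance bound into a total variation bound via the coupling inequality.

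First, I would promote the contraction from neighbors to arbitrary pairs. Fix $x,y\in\Omega$ and choose a minimizing neighbor-to-neighbor path $x = x_0 \sim x_1 \sim \cdots \sim x_r = y$ realizing $d(x,y)=\sum_{i=1}^r d(x_{i-1},x_i)$. Since $d_K$ is stated to be a metric, the triangle inequality plus the neighbor bound give
\[
d_K(x,y) \;\leq\; \sum_{i=1}^r d_K(x_{i-1},x_i) \;\leq\; (1-\delta(\Omega))\sum_{i=1}^r d(x_{i-1},x_i) \;=\; (1-\delta(\Omega))\,d(x,y).
\]
In other words, $E[d(X_{t+1},Y_{t+1}) \mid X_t=x, Y_t=y] \leq (1-\delta(\Omega))\,d(x,y)$ for all $x,y$, not merely neighbors.

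Next, I iterate in $t$ using the tower property (Markov property of the coupled chain): conditioning on $(X_t,Y_t)$ and applying the extended contraction, induction yields
\[
E[d(X_t,Y_t)] \;\leq\; (1-\delta(\Omega))^t\, d(X_0,Y_0) \;\leq\; (1-\delta(\Omega))^t\,\text{diam}(\Omega).
\]
Since $d(x,y)\geq 1$ whenever $x\neq y$, Markov's inequality gives
\[
P(X_t\neq Y_t) \;\leq\; E[d(X_t,Y_t)] \;\leq\; (1-\delta(\Omega))^t\,\text{diam}(\Omega) \;\leq\; e^{-\delta(\Omega) t}\,\text{diam}(\Omega),
\]
using $1-\delta \leq e^{-\delta}$.

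Finally, I invoke the Coupling Inequality: $\|P^t(x,\cdot)-\pi\|_{\TV} \leq P(X_t\neq Y_t)$, maximized over initial states gives $d(t) \leq e^{-\delta(\Omega)t}\,\text{diam}(\Omega)$. Requiring the right-hand side to be at most $\epsilon$ and solving for $t$ produces
\[
t \;\geq\; \frac{\log\text{diam}(\Omega) - \log\epsilon}{\delta(\Omega)},
\]
so the smallest integer $t$ satisfying this, namely $\lceil(\log\text{diam}(\Omega)-\log\epsilon)/\delta(\Omega)\rceil$, is an upper bound on $t_{\mix}(\epsilon)$.

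The only nontrivial step is the first one, extending the contraction from neighboring pairs to all pairs. If one did not want to rely on $d_K$ being a metric, one would instead build a grand coupling by composing the neighborwise couplings along a geodesic path $x=x_0\sim\cdots\sim x_r=y$ and use linearity of expectation on the path-length functional; I would expect to comment briefly on why the composed coupling has the correct marginals. The rest is a routine iteration and application of the already-proved Coupling Inequality.
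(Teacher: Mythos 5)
Your proposal is correct and follows essentially the same route as the paper's proof: extend the neighborwise contraction to arbitrary pairs via the path decomposition and the (sub)additivity of $d_K$, iterate to get $E[d(X_t,Y_t)] \leq (1-\delta(\Omega))^t \, \text{diam}(\Omega)$, bound $P(X_t \neq Y_t)$ using $d \geq 1$ off the diagonal, and finish with the Coupling Inequality. The only cosmetic difference is where the elementary bound is applied — you use $1-\delta \leq e^{-\delta}$ before solving for $t$, while the paper solves with $-\log(1-\delta(\Omega))$ in the denominator and then relaxes it to $\delta(\Omega)$ — which yields the identical final estimate.
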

\begin{proof}
For any $x$ and $y$ in $\Omega$, consider the path metric minimizing path 
$$\rho: ~x_0=x ~\sim x_1 \sim x_2 \sim \hdots \sim x_{r-1} \sim ~x_r=y$$ 
such that
$$d(x,y)=\sum_{i=1}^r d(x_{i-1},x_i).$$
Then
\beas
E[d(X_{t+1},Y_{t+1}) ~|X_t=x,Y_t=y] & = & d_K(x,y)\\
& \leq & \sum_{i=1}^r d_K(x_{i-1},x_i) \\
& \leq & (1-\delta(\Omega)\big)  \sum_{i=1}^r d(x_{i-1},x_i) \\
& = & (1-\delta(\Omega)\big) d(x,y).
\eeas
Hence, after $t$ iterations,
$$E[d(X_t,Y_t)] \leq (1-\delta(\Omega)\big)^t d(X_0,Y_0) \leq \big(1-\delta(\Omega)\big)^t \text{diam} (\Omega)$$
for any initial $(X_0,Y_0)$, and
$$P(X_t \not= Y_t) = P\big( d(X_t,Y_t) \geq 1 \big) \leq E[d(X_t,Y_t)] \leq \big(1-\delta(\Omega)\big)^t \text{diam} (\Omega) \leq \epsilon$$
whenever 
$$t \geq {\log\text{diam} (\Omega)-\log\epsilon \over -\log\big(1-\delta(\Omega)\big)}.$$
Thus, by the Coupling Inequality,
\[ t_{\mix}(\epsilon) \leq \left\lceil  {\log\text{diam} (\Omega)-\log\epsilon \over -\log\big(1-\delta(\Omega)\big)} \right\rceil 
\leq \left\lceil  {\log\text{diam} (\Omega)-\log\epsilon \over \delta(\Omega)} \right\rceil . \]
\end{proof}

\noi
{\bf Example.} Consider the Ising model over a $d$-dimensional torus $\mathbb{Z}^d /n\mathbb{Z}^d$. There $\Omega=\{-1,+1\}^{n^d}$ is the space of all spin configurations, and for any pair of configurations $\sigma,\tau \in \Omega$, the path metric $d(\sigma,\tau)$ is the number of discrepancies between them
$$d(\sigma, \tau) = \sum_{x \in \mathbb{Z}^d /n\mathbb{Z}^d} \mathbf{1}\{ \sigma_x \neq \tau_x \}.$$
The diameter $~\text{diam} (\Omega)=n^d$. It can be checked that if the inverse temperature parameter $\beta$ satisfies $\tanh(\beta)<{1 \over 2d}$, the contraction inequality (\ref{eqn:contr1}) is satisfied with 
$$\delta(n)={1-2d\tanh(\beta) \over n}.$$
Hence
$$t_{mix}(\epsilon) \leq \left\lceil  {\log\text{diam} (\Omega)-\log\epsilon \over \delta(\Omega)} \right\rceil
=\left\lceil  n{d\log n-\log\epsilon \over 1-2d\tanh(\beta)} \right\rceil =O\Big(Cn\log n \Big),$$ 
where $~C={d \over 1-2d\tanh(\beta)}$.

\vskip 0.3 in

The emergence of the path coupling technique \cite{BD} has allowed for a greater simplification in the use of the coupling argument, as rigorous analysis of coupling can be significantly easier when one considers only neighboring configurations. However, the simplification of the path coupling technique comes at the cost of the strong assumption that the coupling distance for all pairs of neighboring configurations must be contracting. Observe that although the contraction between all neighbors is a sufficient condition for the above mixing time bound, it is far from being a necessary condition. In fact, this condition is an artifact of the method.

\medskip

There had been some successful generalizations of the path coupling method. Specifically in \cite{DGJM}, \cite{HV} and \cite{BordD}. In \cite{DGJM}, the path coupling method is generalized to account for contraction after a specific number of time-steps, defined as a random variable.  In \cite{HV} a multi-step non-Markovian coupling construction is considered that evolves via partial couplings of variable lengths determined by stopping times.  In order to bound the coupling time, the authors of  \cite{HV} introduce a technique they call {\it variable length path coupling} that further generalizes the approach in \cite{DGJM}. 

\bigskip

\subsection{Random-to-Random Shuffling} 

An example illustrating the idea of path coupling can be found in the REU project \cite{reu} of Jennifer Thompson that was supervised by Yevgeniy Kovchegov in the summer of 2010 at Oregon State University. There, we consider the shuffling algorithm whereby on each iteration we select a card uniformly from the deck, remove it from  the deck, and place it in one of the $n$ positions in the deck, selected uniformly and independently. Each iteration being done independently of the others. This is referred to as the random-to-random card shuffling algorithm.  We need to shuffle the deck so that when we are done with shuffling the deck each of $n!$ possible permutations is obtained with probability close to ${1 \over n!}$. Its mixing time can be easily shown to be of order $O(n \log{n})$ using the notion of {\it strong stationary time}. For this one would consider the time it takes for each card in the deck to be selected at least once. Then use the {\it coupon collector problem} to prove the $O(n \log{n})$ upper bound on the mixing time. The same coupon collector problem is applied to show that we need at least $O(n \log n)$ iterations of the shuffling algorithm to mix the deck. The goal of the REU project in \cite{reu} was to arrive with the $O(n \log{n})$ upper bound using the coupling method. 

\subsubsection{The Coupling}

Take two decks of $n$ cards,  A and B.

\begin{itemize}
\item Randomly choose $i \in [1,n]$.
\item Remove card with label $i$ from each deck.
\item Randomly reinsert card $i$ in deck A.
\item \begin{enumerate}
\item If the new location of $i$ in A is the top of A, then insert $i$ on the top of B.
\item If the new location of $i$ in A is below card $j$, insert $i$ below $j$ in B.
\end{enumerate}
\end{itemize}

\begin{figure}[ht]
\begin{center} 
 \includegraphics[scale=1.0]{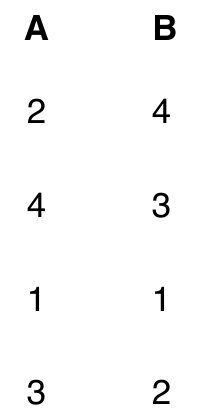} 
\end{center}
\caption{One configuration of matchings between two decks of $n=4$ cards.}
\label{perm0}
\end{figure}

Let $A_t \in S_n$ and $B_t \in S_n$ denote the card orderings (permutations)  in decks A and B after $t$ iterations.

\subsubsection{Computing the coupling time with a laces approach}

We introduce the following path metric $d(\cdot,\cdot)  : ~S_n \times S_n \rightarrow \mathbb Z_+$ by letting  $d(\sigma, \sigma ')$ be the minimal number of nearest neighbour transpositions to traverse between the two permutations, $\sigma$ and $\sigma'$. For example, for the two decks A and B in Figure \ref{perm0}, a distance minimizing path connecting the two permutations is given in Figure \ref{perm1}.
\begin{figure}[ht]
\begin{center} 
  \includegraphics[scale=1.0]{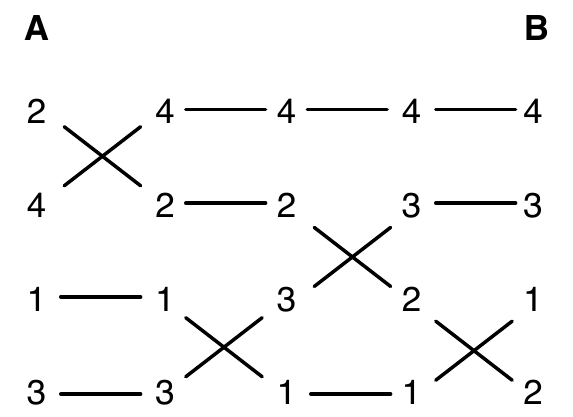} 
\end{center}
\caption{Minimal number of crossings between the two permutations is four.} 
\label{perm1}
\end{figure}

\medskip
\noindent
Note that $d(\sigma, \sigma ') \leq  { n \choose 2}$. We consider the quantity $d_t = d(A_t, B_t)$, the distance between our two decks at time $t$. We want to find the relationship between $\mathbb E[d_{t+1}]$ and $\mathbb E[d_{t}]$.

\medskip
\noindent
We consider a $d(\cdot,\cdot)$-metric minimizing path. We call the path taken by a card label a {\it lace}. Thus each lace representing a card label is involved in a certain number of crossings. Let $r_t$  be the number crossings per lace, averaged over all $n$ card labels. Then we have $d_t = \frac{nr_t}{2}$.

\begin{figure}[ht]
\begin{center} 
  \includegraphics[scale=1.0]{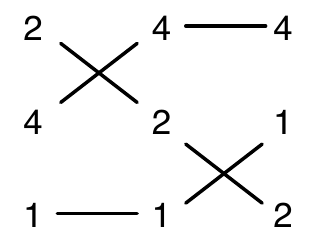} 
\end{center}
\caption{Removing lace $3$ decreases the number of crossings to two.} 
\label{perm2}
\end{figure}

\medskip
\noindent
The evolution of the path connecting $A_t$ to $B_t$  can be described as following. At each timestep we pick a lace (corresponding to a card label, say $i$) at random and remove it. For example, take a minimal path connecting decks A and B in Figure \ref{perm1}, and remove a lace corresponding to label $3$, obtaining Figure \ref{perm2}.
\medskip
\noindent
Then we reinsert the removed lace back. There will be two cases:
\begin{enumerate}
\item With probability $\frac {1}{n}$ we place the lace corresponding to card label $i$ to the top of the deck. See Figure \ref{perm3}. Then there will be no new crossings.

\begin{figure}[ht]
\begin{center} 
  \includegraphics[scale=1.0]{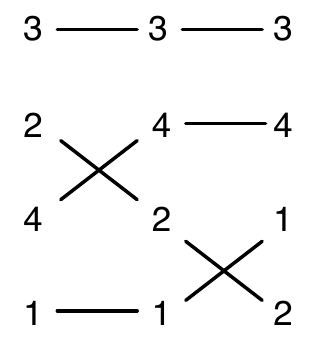} 
\end{center}
\caption{Placing lace $3$ on top does not add new crossings.} 
\label{perm3}
\end{figure}
 
\item We choose a lace $j$ randomly and uniformly chosen among the remaining $n-1$ laces, and place lace $i$ directly below lace $j$. This has probability $\frac{n-1}{n}$. Then the number of additional new crossings is the same as the number of crossings of lace $j$, as in Figure \ref{perm4}.
Here $$ \mathbb E [\textrm{new crossings}] = \mathbb E [\textrm{average number of crossings for the remaining laces} ] = \left(\frac{nr_t} 2 - r_t \right) \frac {1}{n-1}.$$
\end{enumerate}

\begin{figure}[ht]
\begin{center} 
  \includegraphics[scale=1.0]{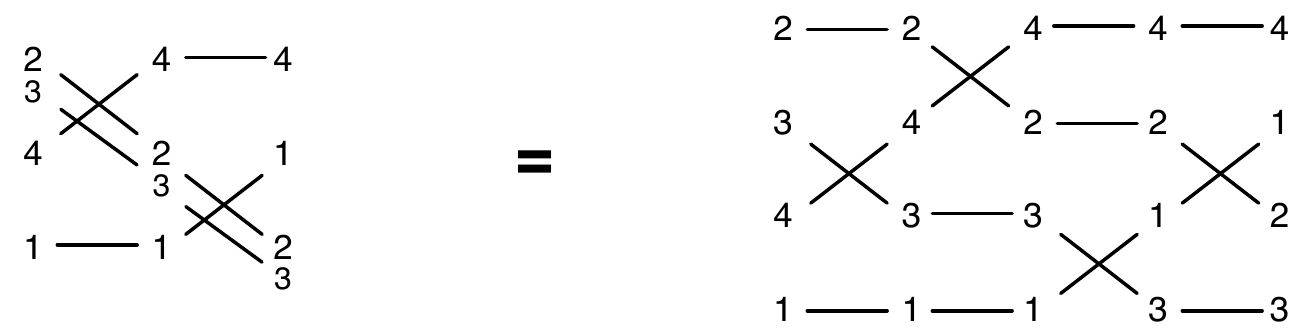} 
\end{center}
\caption{Inserting lace $3$ directly below lace $2$ adds the same number of crossing as there were of lace $2$.} 
\label{perm4}
\end{figure}

\noindent
Then 
$$\mathbb E[d_{t+1} | A_t, B_t] = \frac{nr_t} 2 - r_t + \left(\frac{n-1}{n} \right) \left( \frac{nr_t} 2 - r_t\right)\frac {1}{n-1}= \left( 1 - \frac{1}{n} - \frac{2}{n^2} \right) d_t .$$

\noindent
Hence
$$\mathbb E[d_{t+1}] = \left( 1 - \frac{1}{n} - \frac{2}{n^2} \right) \mathbb E[d_t],$$
and therefore
$$P(A_t \not= B_t)=P(d_t \geq 1) \leq  \mathbb E[d_t] = \left(1 - \frac{1}{n} - \frac{2}{n^2}\right)^t \mathbb E[d_0] \leq \left( 1 - \frac{1}{n} - \frac{2}{n^2}\right)^t {n \choose 2} \leq \epsilon$$
whenever
$$ t \geq {- 2 \log n + \log 2 +\log \epsilon \over \log  \left( 1 - \frac 1 n - \frac 2 {n^2} \right)} = 2 n \log n + O (n).$$

\medskip
\noindent
Thus providing an upper bound on mixing time.

\bigskip

\section{Gibbs Ensembles and Glauber Dynamics} \label{gibbs} 

\medskip

In recent years, mixing times of dynamics of statistical mechanical models have been the focus of much probability research, drawing interest from researchers in mathematics, physics and computer science.  The topic is both physically relevant and mathematically rich.  But up to now, most of the attention has focused on particular models including rigorous results for several mean-field models.  A few examples are (a) the Curie-Weiss (mean-field Ising) model \cite{DLP1, DLP2, EL, LLP}, (b) the mean-field Blume-Capel model \cite{EKLV, KOT}, (c) the Curie-Weiss-Potts (mean-field Potts) model \cite{BR, CDLLPS}.  A good survey of the topic of mixing times of statistical mechanical models can be found in the recent paper by Cuff et.\ al.\ \cite{CDLLPS}.

The aggregate path coupling method was developed in \cite{KOT} and \cite{KO} to obtain rapid mixing results for statistical mechanical models, in particular, those models that undergo a first-order, discontinuous phase transition.  For this class of models, the standard path coupling method fails to be applicable.  The remainder of this survey is devoted to the exposition of the aggregate path coupling method applied to statistical mechanical models.

\medskip

As stated in \cite{Ellis}, ``In statistical mechanics, one derives macroscopic properties of a substance from a probability distribution that describes the complicated interactions among the individual constituent particles.''  The distribution referred to in this quote is called the Gibbs ensemble or Gibbs measure which are defined next.

\vskip 0.1 in
\noindent

A {\it configuration} of the model has the form $\omega = (\omega_1, \omega_2, \ldots, \omega_n) \in \Lambda^n$, where $\Lambda$ is some finite, discrete set.  We will consider a configuration on a graph with $n$ vertices and let $X_i(\omega) = \omega_i$ denote the {\it spin} at vertex $i$.  The random variables $X_i$'s for $i=1, 2, \ldots, n$ are independent and identically distributed with common distribution $\rho$.  The interactions among the spins are defined through the {\it Hamiltonian} function $H_n$ and we denote by $M_n(\omega)$ the relevant macroscopic quantity corresponding to the configuration $\omega$.  The lift from the microscopic level of the configurations to the macroscopic level of $M_n$ is through the {\it interaction representation function} $H$ that satisfies 
\be
\label{eqn:IntRepFunct} 
H_n(\omega) = n H(M_n(\omega)). 
\ee

\begin{defn}
\label{defn: genGibbs}
The {\bf Gibbs measure} or {\bf Gibbs ensemble} in statistical mechanics is defined as
\be 
\label{eqn:gibbs}
P_{n, \beta} \, (B) = \frac{1}{Z_n(\beta)} \int_B \exp \left\{ -\beta H_n(\omega) \right\} dP_n = \frac{1}{Z_n(\beta)} \int_B \exp \left\{  -\beta n \, H\left(M_n(\omega) \right) \right\} dP_n 
\ee
where $P_n$ is the product measure with identical marginals $\rho$ and $Z_n(\beta) = \int_{\Lambda^n} \exp \left\{ -\beta H_n(\omega) \right\} dP_n$ is the {\bf partition function}.  The positive parameter $\beta$ represents the inverse temperature of the external heat bath.
\end{defn}

\begin{defn}
\label{defn: genGlauber}
On the configuration space $\Lambda^n$, we define the {\bf Glauber dynamics} for the class of spin models considered in this paper.  These dynamics yield a reversible Markov chain $X^t$ with stationary distribution being the Gibbs ensemble $P_{n, \beta}$.

\medskip

{\em (i)} Select a vertex $i$ from the underlying graph uniformly, 

\smallskip

{\em (ii)} Update the spin at vertex $i$ according to the distribution $P_{n, \beta}$, conditioned on the event that the spins at all vertices not equal to $i$ remain unchanged.  
\end{defn}

\noi
For more on Glauber dynamics, see \cite{Bre}.

\medskip

An important question of mixing times of dynamics of statistical mechanical models is its relationship with the thermodynamic phase transition structure of the system.  More specifically, as a system undergoes an equilibrium phase transition with respect to some parameter; e.g. temperature, how do the corresponding mixing times behave?  The answer to this question depends on the type of thermodynamic phase transition exhibited by the model.  In the next section we define the two types of thermodynamic phase transition via the large deviation principle of the macroscopic quantity.

\bigskip

\section{Large Deviations and Equilibrium Macrostate Phase Transitions} \label{LDP} 

\medskip

The application of the aggregate path coupling method to prove rapid mixing takes advantage of large deviations estimates that these models satisfy.  In this section, we give a brief summary of large deviations theory used in this paper, written in the context of Gibbs ensembles defined in the previous section.  For a more complete theory of large deviations see for example \cite{DZ} and \cite{Ellis}.

A function $I$ on $\R^q$ is called a {\bf rate function} if $I$ maps $\R^q$ to $[0, \infty]$ and has compact level sets.  

\begin{defn}
Let $I_\beta$ be a rate function on $\R^q$.  The sequence $\{M_n\}$ with respect to the Gibbs ensemble $P_{n, \beta}$ is said to satisfy the {\bf large deviation principle} {\em (LDP)} on $\R^q$ with rate function $I_\beta$ if the following two conditions hold.

For any closed subset $F$,
\be
\label{eqn:upperldp}
\limsup_{n \goto \infty} \frac{1}{n} \log P_{n, \beta} \{ M_n \in F \} \leq - I_\beta (F) 
\ee
and for any open subset $G$, 
\be
\label{lowerldp} 
\liminf_{n \goto \infty} \frac{1}{n} \log P_{n, \beta} \{ M_n \in G \} \geq - I_\beta (G)
\ee
where $I_\beta (A) = \inf_{z \in A} I_\beta (z)$.
\end{defn}

The LDP upper bound in the above definition implies that values $z$
satisfying $I_\beta(z) > 0$ have an exponentially small probability
of being observed as $n \goto \infty$. Hence we define 
the set of {\bf equilibrium macrostates} of the system by
\[
\mathcal{E}_\beta = \{z : I_\beta(z) = 0\}.
\]

For the class of Gibbs ensembles studied in this survey paper, the set of equilibrium macrostates exhibits the following general behavior.  There exists a phase transition critical value of the parameter $\beta_c$ such that 

\smallskip

(a) for $0 < \beta < \beta_c$, the set $\mathcal{E}_\beta$ consists of a single equilibrium macrostate (single phase); i.e. 
\[ \mathcal{E}_\beta = \{\tilde{z}_\beta \} \]

\smallskip

(b) for $\beta_c < \beta$, the set $\mathcal{E}_\beta$ consists of a multiple equilibrium macrostates (multiple phase); i.e.
\[ \mathcal{E}_\beta = \{z_{\beta, 1}, z_{\beta, 2}, \ldots, z_{\beta, q}   \} \]

\smallskip

\noi
The transition from the single phase to the multiple phase follows one of two general types.

\smallskip

Continuous, second-order phase transition: For all $j=1, 2, \ldots, q$, $\lim_{\beta \goto \beta_c^+} z_{\beta, j} = \tilde{z}_\beta$

\smallskip

Discontinuous, first-order phase transition: For all $j=1, 2, \ldots, q$, $\lim_{\beta \goto \beta_c^+} z_{\beta, j} \neq \tilde{z}_\beta$

\smallskip

\noi
As mentioned in the Introduction, understanding the relationship between the mixing times of the Glauber dynamics and the equilibrium phase transition structure of the corresponding Gibbs ensembles is a major motivation for the work discussed in this paper.

\vskip 0.1 in
\noindent

Recent rigorous results for statistical mechanical models that undergo continuous, second-order phase transitions, like the famous Ising model, have been published in \cite{LPW, LLP, DLP1}.  For these models, it has been shown that the mixing times undergo a transition at precisely the thermodynamic phase transition point.  In order to show rapid mixing in the subcritical parameter regime ($\beta < \beta_c$) for these models, the classical path coupling method can be applied directly.

\vskip 0.1 in
\noindent

However, for models that exhibit the other type of phase transition: discontinuous, first-order; e.g. Potts model with $q > 2$ \cite{Wu, CET} and the Blume-Capel model \cite{Blu, BEG, Cap1, Cap2, Cap3, EOT} with weak interaction, the mixing time transition does not coincide with the thermodynamic equilibrium phase transition.

\vskip 0.1 in
\noindent

Discontinuous, first-order phase transitions are more intricate than their counterpart, which makes rigorous analysis of these models traditionally more difficult.  Furthermore, the more complex phase transition structure causes certain parameter regimes of the models to fall outside the scope of standard mixing time techniques including the classical path coupling method discussed in subsection \ref{pathcoupling}.  This was the motivation for the development of the aggregate path coupling method.

\vskip 0.1 in
\noindent

In the following two sections, we define and characterize the aggregate path coupling method to two distinct classes of statistical mechanical spin models.  
We begin with the mean-field Blume-Capel (BC) model, a model ideally suited for the analysis of the relationship between the thermodynamic equilibrium behavior and mixing times due to its intricate phase transition structure.  Specifically, the phase diagram of the BC model includes a curve at which the model undergoes a second-order, continuous phase transition, a curve where the model undergoes a first-order, discontinuous phase transition, and a tricritical point which separates the two curves.  Moreover, the BC model clearly illustrates the strength of the aggregate path coupling method within the simpler setting where the macroscopic quantity for the model is one dimensional.

\vskip 0.1 in
\noindent

In section \ref{genspinmodels}, we generalize the ideas applied to the BC model and define the aggregate path coupling method to a large class of statistical mechanical models with macroscopic quantities in arbitrary dimensions.  We end the survey paper with new mixing time results for Glauber dynamics that converge to the so-called generalized Potts model on the complete graph \cite{JKRW} by applying the general aggregate path coupling method derived in that section .

\bigskip

\section{Mean-field Blume-Capel model} \label{MFBC} 

\medskip

The Hamiltonian function on the configuration space $\Lambda^n = \{ -1, 0, 1\}^n$ for the mean-field Blume-Capel model is defined by 
\[ H_{n, K}(\omega) = \sum_{j=1}^n \omega_j^2 - \frac{K}{n} \left( \sum_{j=1}^n \omega_j \right)^2 \]
for configurations $\omega = (\omega_1, \ldots, \omega_n)$. Here $K$ represents the interaction strength of the model.  Then for inverse temperature $\beta$, the mean-field Blume-Capel model is defined by the sequence of probability measures
\[ P_{n, \beta, K}(\omega) = \frac{1}{Z_n(\beta,K)} \exp \left[-\beta H_{n,K}(\omega) \right] \]
where $Z_n(\beta, K)= \sum_{\omega \in \Lambda^n} \exp [-\beta H_{n, K}(\omega)] $ is the normalizing constant called the {\it partition function}.

\medskip

\subsection{Equilibrium Phase Structure}\label{eps}

In \cite{EOT}, using large deviation theory \cite{EHT}, the authors proved the phase transition structure of the BC model.  The analysis of $P_{n, \beta, K}$ was facilitated by expressing it
 in the form of a Curie-Weiss (mean-field Ising)-type model.  This is done by absorbing 
the noninteracting component of the Hamiltonian into the product measure $P_n$ that assigns the probability $3^{-n}$ to each $\omega \in \Lambda^n$, obtaining
\be
\label{eqn:rewritecanon}
P_{n,\beta,K}(d\omega) = 
\frac{1}{\tilde{Z}_n(\beta,K)} \cdot \exp\!\left[ n \beta K 
\!\left(\frac{S_n(\omega)}{n} \right)^2 \right] P_{n,\beta}(d\omega)
\ee
In this formula $S_n(\omega)$ equals
the total spin $\sum_{j=1}^n \omega_j$,
$P_{n,\beta}$ is the product measure on $\Lambda^n$ with identical one-dimensional marginals
\be
\label{eqn:rhobeta}
\rho_\beta(d \omega_j) = 
\frac{1}{Z(\beta)} \cdot \exp(-\beta \omega_j^2) \, \rho(d \omega_j),
\ee
$Z(\beta)$ is the normalizing constant 
$\int_\Lambda \exp(-\beta \omega_j^2) \rho(d \omega_j) = 1 + 2 e^{-\beta}$,
and $\tilde{Z}_n(\beta,K)$ is the normalizing constant
$[Z(\beta)]^n/Z_n(\beta,K)$. 

\vskip 0.1 in
\noindent
Although $P_{n,\beta,K}$ has the form of a Curie-Weiss (mean-field Ising) model
when rewritten as in (\ref{eqn:rewritecanon}), 
it is much more complicated because of the $\beta$-dependent
product measure $P_{n,\beta}$ and the presence of the parameter $K$. 
These complications introduce new features to the BC model described above that are not present in the Curie-Weiss model \cite{Ellis}.

\vskip 0.1 in
\noindent
The starting point of the analysis of the 
phase-transition structure of the BC model is the large deviation
principle (LDP) satisfied by the spin per site or {\it magnetization} $S_n/n$ with respect to $P_{n,\beta,K}$.  
In order to state the form of the rate function, we introduce the cumulant generating function $c_\beta$ of the measure
$\rho_\beta$ defined in (\ref{eqn:rhobeta}); for $t \in \R$ this function is defined by 
\be
\label{eqn:cbeta}
c_\beta(t) = \log \int_\Lambda \exp (t\omega_1) \, \rho_\beta (d\omega_1) = 
\log \!\left[ \frac{1+e^{-\beta}(e^t+e^{-t})}{1+2e^{-\beta}} \right] \nonumber 
\ee
We also introduce the Legendre-Fenchel transform of $c_\beta$, which is defined for $z \in [-1,1]$ by
\[
J_\beta(z) = \sup_{t \in \R} \{tz - c_\beta(t)\}
\]
and is finite for $z \in [-1,1]$.
$J_\beta$ is the rate function in Cram\'{e}r's theorem, which 
is the LDP for $S_n/n$ with respect to the product measures
$P_{n,\beta}$ \cite[Thm.\ II.4.1]{Ellis} and is one of the components of
the proof of the LDP for $S_n/n$ with respect to the BC model $P_{n,\beta,K}$.
This LDP is stated in the next theorem and is proved in Theorem 3.3 in \cite{EOT}.

\begin{thm}
\label{thm:ldppnbetak}  
For all $\beta > 0$ and $K > 0$, with respect to $P_{n,\beta,K}$,
$S_n/n$ satisfies the large deviation principle on $[-1,1]$ with exponential speed $n$ and rate function
\[
I_{\beta,K}(z) = J_\beta(z) - \beta K z^2 - \inf_{y \in \R}
\{J_\beta(y) - \beta K y^2 \}.
\]
\end{thm}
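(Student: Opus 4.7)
The plan is to exploit the representation (\ref{eqn:rewritecanon}), which expresses $P_{n,\beta,K}$ as an exponential tilt of the product measure $P_{n,\beta}$ by $\exp[n\beta K(S_n/n)^2]$, and to transfer the LDP from $P_{n,\beta}$ (Cram\'er's theorem) to $P_{n,\beta,K}$ via Varadhan's lemma. Because $S_n/n$ takes values in the compact set $[-1,1]$ and the tilting functional $F(z) = \beta K z^2$ is bounded and continuous there, every integrability hypothesis of Varadhan's lemma is automatic, so no truncation step is needed.

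First I would invoke Cram\'er's theorem for the i.i.d.\ $\rho_\beta$-distributed spins to record that $S_n/n$ satisfies the LDP under $P_{n,\beta}$ on $[-1,1]$ with speed $n$ and rate $J_\beta$. Next I would compute the limiting free energy by applying Varadhan's lemma to $F$:
$$\lim_{n\to\infty} \frac{1}{n}\log \tilde{Z}_n(\beta,K) \;=\; \sup_{z\in[-1,1]}\{\beta K z^2 - J_\beta(z)\} \;=\; -\inf_{y\in\R}\{J_\beta(y) - \beta K y^2\},$$
where the second equality uses $J_\beta \equiv +\infty$ outside $[-1,1]$.

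For the LDP upper bound under $P_{n,\beta,K}$, I would combine (\ref{eqn:rewritecanon}) with the upper-bound half of Varadhan's lemma: for any closed $F \subset [-1,1]$,
$$\limsup_{n\to\infty} \frac{1}{n}\log \int \mathbf{1}_F(S_n/n)\exp[n\beta K(S_n/n)^2]\, dP_{n,\beta} \;\leq\; \sup_{z\in F}\{\beta K z^2 - J_\beta(z)\}.$$
Subtracting the limit of $\frac{1}{n}\log \tilde{Z}_n(\beta,K)$ from the previous display yields exactly $-\inf_{z\in F} I_{\beta,K}(z)$, as required. The matching lower bound on open sets $G$ is obtained analogously: for each $z_0 \in G$ pick a small neighborhood $U \subset G$ of $z_0$ on which $z^2$ differs from $z_0^2$ by at most $\varepsilon$, apply the Cram\'er lower bound on $U$, then let $\varepsilon \downarrow 0$ and optimize over $z_0 \in G$.

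Finally I would check that $I_{\beta,K}$ is a bona fide rate function: it is nonnegative by construction, lower semi-continuous because $J_\beta$ is lsc and $z \mapsto z^2$ is continuous, and its level sets are closed subsets of the compact interval $[-1,1]$, hence compact. The main obstacle is essentially absent in this setting; compactness of the state space together with the boundedness and continuity of $\beta K z^2$ trivialize the tail integrability condition of Varadhan's lemma, so the argument reduces to a clean Boltzmann--Gibbs-type tilt of the Cram\'er LDP.
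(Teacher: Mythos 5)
Your argument is correct and is essentially the same proof the paper relies on: the theorem is quoted from Theorem 3.3 of \cite{EOT}, where the LDP is obtained precisely by viewing $P_{n,\beta,K}$ through the representation (\ref{eqn:rewritecanon}) as an exponential tilt of the product measure $P_{n,\beta}$ by the bounded continuous functional $\beta K z^2$, and transferring Cram\'er's LDP for $S_n/n$ (rate $J_\beta$) to the tilted measure via the Varadhan/Laplace-principle machinery of \cite{EHT}. Your remark that compactness of $[-1,1]$ and boundedness of the tilt make the integrability hypotheses automatic is exactly why the argument is clean in this setting, so no further comment is needed.
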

\noi
The LDP in the above theorem implies that those $z \in [-1,1]$
satisfying $I_{\beta,K}(z) > 0$ have an exponentially small probability
of being observed as $n \goto \infty$. Hence we define 
the set of equilibrium macrostates by
\[
\tilde{\mathcal{E}}_{\beta, K} = \{z \in [-1,1] : I_{\beta,K}(z) = 0\}.
\]

\vskip 0.1 in
\noindent
For $z \in \R$ we define
\be
\label{eqn:gbetak} 
G_{\beta,K}(z) = \beta K z^2 - c_\beta(2\beta K z)
\ee
and as in \cite{EMO1} and \cite{EMO2} refer to it as the {\it free energy functional} of the model.  The calculation of the zeroes of $I_{\beta,K}$ --- equivalently, the global minimum points
of $J_{\beta,K}(z) - \beta K z^2$ --- is greatly facilitated by the following observations
made in Proposition 3.4 in \cite{EOT}: 
\begin{enumerate}
\item 
The global minimum points of 
$J_{\beta,K}(z) - \beta K z^2$ coincide with the global minimum points of $G_{\beta,K}$,
which are much easier to calculate. 

\item 
The minimum values $\min_{z \in \R}\{J_{\beta,K}(z) - \beta K z^2\}$ 
and $\min_{z \in \R}\{G_{\beta,K}(z)\}$ coincide.
\end{enumerate}
Item (1) gives the alternate characterization that 
\be
\label{eqn:ebetak}
\tilde{\mathcal{E}}_{\beta, K} = \{z \in [-1,1] : z \mbox{ minimizes } G_{\beta,K}(z)\}.
\ee

\vskip 0.1 in
\noindent
The free energy functional $G_{\beta, K}$ exhibits two distinct behaviors depending on whether $\beta \leq \beta_c = \log 4$ or $\beta > \beta_c$.  In the first case, the behavior is similar to the Curie-Weiss (mean-field Ising) model.  Specifically, there exists a critical value $K_c^{(2)}(\beta)$ defined in (\ref{eqn:kcbeta}) such that for $K < K_c^{(2)}(\beta)$, $G_{\beta, K}$ has a single minimum point at $z = 0$.  At the critical value $K = K_c^{(2)}(\beta)$, $G_{\beta, K}$ develops symmetric non-zero minimum points and a local maximum point at $z =0$.  This behavior corresponds to a continuous, second-order phase transition and is illustrated in Figure \ref{continuous}.

\begin{figure}[h]
\begin{center}
\includegraphics[height=1.6in]{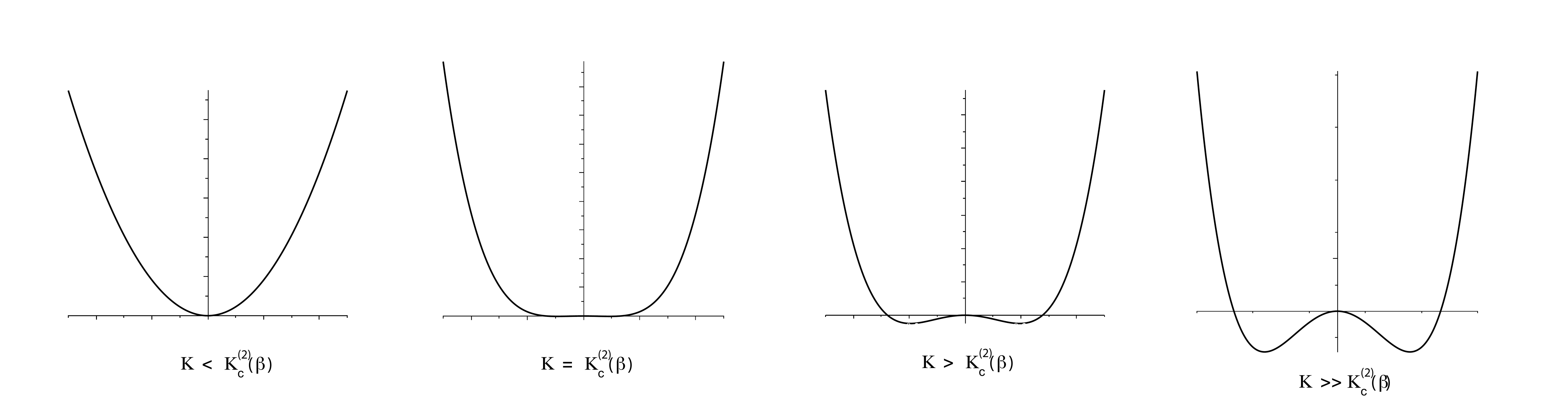}
\caption{\footnotesize The free-energy functional $G_{\beta, K}$ for $\beta \leq \beta_c$} \label{continuous}
\end{center}
\end{figure} 

\vskip 0.1 in
\noindent
On the other hand, for $\beta > \beta_c$, $G_{\beta, K}$ undergoes two transitions at the values denoted by $K_1(\beta)$ and $K_c^{(1)}(\beta)$.  For $K < K_1(\beta)$, $G_{\beta, K}$ again possesses a single minimum point at $z = 0$.  At the first critical value $K_1(\beta)$, $G_{\beta, K}$ develops symmetric non-zero local minimum points in addition to the global minimum point at $z=0$.  These local minimum points are referred to as {\it metastable states} and we refer to $K_1(\beta)$ as the {\it metastable critical value}.  This value is defined implicitly in Lemma 3.9 of \cite{EOT} as the unique value of $K$ for which there exists a unique $z >0$ such that
\[  G_{\beta, K_1(\beta)}'(z) = 0 \hsp \mbox{and} \hsp G_{\beta, K_1(\beta)}''(z) = 0 \] 
As $K$ increases from $K_1(\beta)$ to $K_c^{(1)}(\beta)$, the local minimum points decrease until at $K=K_c^{(1)}(\beta)$, the local minimum points reach zero and $G_{\beta, K}$ possesses three global minimum points.  Therefore, for $\beta > \beta_c$, the BC model undergoes a phase transition at $K=K_c^{(1)}(\beta)$, which is defined implicitly in \cite{EOT}.  Lastly, for $K > K_c^{(1)}(\beta)$, the symmetric non-zero minimum points drop below zero and thus $G_{\beta, K}$ has two symmetric non-zero global minimum points.  This behavior corresponds to a discontinuous, first-order phase transition 
 and is illustrated in Figure \ref{discont}.

\begin{figure}[h]
\begin{center}
\includegraphics[height=2.75in]{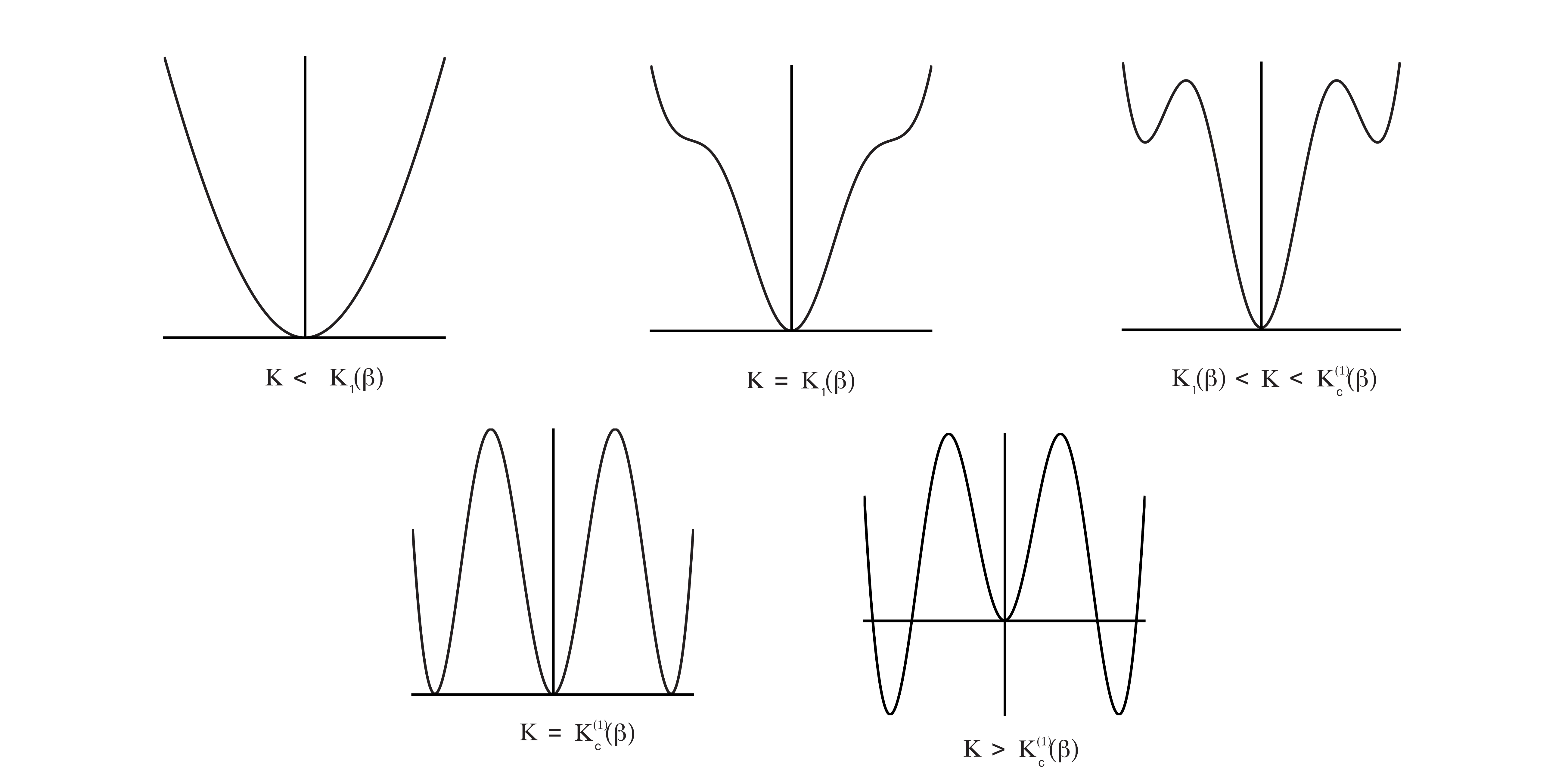}
\caption{\footnotesize The free-energy functional $G_{\beta, K}$ for $\beta > \beta_c$} \label{discont}
\end{center}
\end{figure} 

\vskip 0.1 in
\noindent
In the next two theorems, the 
structure of $\tilde{\mathcal{E}}_{\beta, K}$ corresponding to the behavior of $G_{\beta, K}$ just described is stated which depends on the relationship between $\beta$ and the
critical value $\beta_c = \log 4$.
We first describe $\tilde{\mathcal{E}}_{\beta, K}$ for 
$0 < \beta \leq \beta_c$ and then for $\beta > \beta_c$.  
In the first case $\tilde{\mathcal{E}}_{\beta, K}$ undergoes a continuous
bifurcation as $K$ increases through the critical value $K_c^{(2)}(\beta)$
defined in (\ref{eqn:kcbeta}); physically, this bifurcation
corresponds to a second-order phase transition.  The following theorem 
is proved in Theorem 3.6 in \cite{EOT}.

\begin{thm}
\label{thm:secondorder} 
For $0 < \beta \leq \beta_c$, we define
\be
\label{eqn:kcbeta}
K_c^{(2)}(\beta) = \frac{1}{2\beta c''_\beta(0)} =
\frac{e^\beta + 2}{4\beta}.
\ee
For these values of $\beta$, $\tilde{\mathcal{E}}_{\beta, K}$ has the following structure.

{\em(a)} For $0 < K \leq K_c^{(2)}(\beta)$,
$\tilde{\mathcal{E}}_{\beta,K} = \{0\}$.

{\em(b)} For $K > K_c^{(2)}(\beta)$, there exists 
${z}(\beta,K) > 0$ such that
$\tilde{\mathcal{E}}_{\beta,K} = \{\pm z(\beta,K) \}$.

{\em(c)} ${z}(\beta,K)$ is
a positive, increasing, continuous function for $K > K_c^{(2)}(\beta)$, and
as $K \goto (K_c^{(2)}(\beta))^+$, $z(\beta,K) \goto 0$. 
Therefore, $\tilde{\mathcal{E}}_{\beta,K}$ exhibits a continuous bifurcation
at $K_c^{(2)}(\beta)$.
\end{thm}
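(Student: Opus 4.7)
The plan is to reduce all three claims to the analysis of the scalar critical-point equation for $G_{\beta,K}$, exploiting a single structural property of the cumulant generating function $c_\beta$ that holds precisely in the regime $\beta \leq \beta_c = \log 4$. Since $\rho_\beta$ is symmetric on $\{-1,0,1\}$, $c_\beta$ is even and strictly convex, and $c'_\beta$ maps $\R$ into $(-1,1)$. Consequently $G_{\beta,K}$ is even and
\[
G'_{\beta,K}(z) = 2\beta K\bigl[z - c'_\beta(2\beta K z)\bigr], \qquad G''_{\beta,K}(0) = 2\beta K\bigl(1 - 2\beta K c''_\beta(0)\bigr),
\]
so the nonnegative critical points of $G_{\beta,K}$ are exactly the fixed points of $\Phi_K(z) := c'_\beta(2\beta K z)$, with $z=0$ always among them, and $K_c^{(2)}(\beta) = 1/(2\beta c''_\beta(0))$ is the value at which $G''_{\beta,K}(0)$ changes sign.

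The analytic ingredient that I expect to do most of the work, and which I regard as the main obstacle, is strict concavity of $c'_\beta$ on $[0,\infty)$ for every $\beta \in (0,\beta_c]$. From the explicit formula $c_\beta(t) = \log\!\bigl[(1+2e^{-\beta}\cosh t)/(1+2e^{-\beta})\bigr]$ one verifies that the threshold $\log 4$ is pinned by the fourth cumulant at the origin, $c^{(4)}_\beta(0) = \mu_2(1 - 3\mu_2)$ with $\mu_2 = c''_\beta(0) = 2e^{-\beta}/(1+2e^{-\beta})$: this is negative for $\beta < \log 4$ and vanishes at $\beta = \log 4$. Integrating against $c'''_\beta(0) = 0$, one obtains the strict inequality $c'_\beta(s) < c''_\beta(0)\, s$ for all $s > 0$; at the boundary $\beta = \beta_c$ this strict inequality must be retained via the next nonzero even cumulant. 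The global verification that $c'''_\beta(t) < 0$ for all $t > 0$, together with this endpoint analysis, is elementary but requires some care.

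Granted concavity of $c'_\beta$, the three parts follow in order. For (a), whenever $K \leq K_c^{(2)}(\beta)$ and $z > 0$ one has $c'_\beta(2\beta K z) < c''_\beta(0)\cdot 2\beta K z \leq z$, hence $G'_{\beta,K}>0$ on $(0,1]$ and, by evenness, $z=0$ is the unique global minimizer. For (b), with $K > K_c^{(2)}(\beta)$, the function $h(z) := z - c'_\beta(2\beta K z)$ is strictly convex on $[0,\infty)$ with $h(0)=0$, $h'(0)<0$, and $h(1) = 1 - c'_\beta(2\beta K) > 0$; hence $h$ has a unique positive zero $z(\beta,K) \in (0,1)$, and combined with the sign analysis of $G'_{\beta,K}$ and the evenness of $G_{\beta,K}$ this yields $\tilde{\mathcal{E}}_{\beta,K} = \{\pm z(\beta,K)\}$. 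For (c), I would apply the implicit function theorem to $F(z,K) := z - c'_\beta(2\beta K z)$ at $(z(\beta,K),K)$: strict convexity of $h$ gives $\partial_z F = 1 - 2\beta K c''_\beta(2\beta K z(\beta,K)) > 0$, while $\partial_K F = -2\beta z c''_\beta(2\beta K z) < 0$ for $z>0$, yielding $C^1$ dependence and $\partial_K z(\beta,K) > 0$. Finally, a subsequential limit $z(\beta,K_n) \to z^* > 0$ as $K_n \downarrow K_c^{(2)}(\beta)$ would yield $z^* = c'_\beta(2\beta K_c^{(2)}(\beta)z^*)$, contradicting $c'_\beta(s) < c''_\beta(0)s$ at $s = 2\beta K_c^{(2)}(\beta)z^* > 0$; hence $z(\beta,K) \to 0^+$.
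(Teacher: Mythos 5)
Your proposal is correct and follows essentially the same route as the proof the paper relies on (the survey does not prove this theorem itself but cites Theorem 3.6 of \cite{EOT}): reduce all three parts to the critical-point equation $z = c_\beta'(2\beta K z)$ for $G_{\beta,K}$ and exploit strict concavity of $c_\beta'$ on $(0,\infty)$ for $\beta \leq \beta_c$. The concavity input you flag as the main analytic obstacle is exactly Theorem \ref{thm:concavity} (Theorem 3.5 of \cite{EOT}), which the paper states and which you may quote rather than reprove.
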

\noi
For $\beta \in (0,\beta_c)$, the curve $(\beta,K_c^{(2)}(\beta))$ is the curve of second-order 
critical points.   As we will see in a moment, for $\beta \in (\beta_c,\infty)$
the BC model also has a curve
of first-order critical points, which we denote by $(\beta,K_c^{(1)}(\beta))$.

\vskip 0.1 in
\noindent
We now describe $\tilde{\mathcal{E}}_{\beta, K}$ for 
$\beta > \beta_c$.  In this case $\tilde{\mathcal{E}}_{\beta, K}$ undergoes a discontinuous
bifurcation as $K$ increases through an implicitly defined critical value.
Physically, this bifurcation
corresponds to a first-order phase transition.  The following theorem 
is proved in Theorem 3.8 in \cite{EOT}.  

\begin{thm}
\label{thm:firstorder} 
For all $\beta > \beta_c $, $\tilde{\mathcal{E}}_{\beta, K}$ has
the following structure in terms of the quantity $K_c^{(1)}(\beta)$ defined implicitly for $\beta > \beta_c$ on page {\em 2231} of
{\em \cite{EOT}}.

{\em(a)} For $0 < K < K_c^{(1)}(\beta)$,
$\tilde{\mathcal{E}}_{\beta,K} = \{0\}$.

{\em(b)} There exists $z(\beta,K_c^{(1)}(\beta)) > 0$
such that $\tilde{\mathcal{E}}_{\beta,K_c^{(1)}(\beta)} =
\{0,\pm z(\beta,K_c^{(1)}(\beta))\}$.

{\em(c)} For $K > K_c^{(1)}(\beta)$ 
there exists $z(\beta,K) > 0$
such that $\tilde{\mathcal{E}}_{\beta,K} =
\{\pm z(\beta,K)\}$.

{\em(d)} $z(\beta,K)$
is a positive, increasing, continuous function for $K \geq K_c^{(1)}(\beta)$, and 
as $K \goto K_c^{(1)}(\beta)^+$, $z(\beta,K) \goto 
z(\beta,K_c^{(1)}(\beta)) > 0$.  Therefore,
$\tilde{\mathcal{E}}_{\beta,K}$ exhibits a discontinuous bifurcation
at $K_c^{(1)}(\beta)$.
\end{thm}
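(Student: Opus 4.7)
Fix $\beta > \beta_c = \log 4$. By the characterization (\ref{eqn:ebetak}) and the defining formula (\ref{eqn:gbetak}), the set $\tilde{\mathcal{E}}_{\beta,K}$ coincides with the set of global minimizers of the smooth, even, coercive function $G_{\beta,K}(z) = \beta K z^2 - c_\beta(2\beta K z)$ with $G_{\beta,K}(0)=0$. The critical-point equation $G'_{\beta,K}(z)=0$ reduces to the fixed-point equation $z = c'_\beta(2\beta K z)$, and by evenness it suffices to classify positive solutions as $K$ varies. The whole theorem then becomes a bifurcation analysis of this one-dimensional variational problem, with the qualitative distinction versus Theorem \ref{thm:secondorder} driven solely by the shape of $c_\beta$ when $\beta$ exceeds the tricritical value $\log 4$.

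\textbf{Step 1: critical-point structure.} Substitute $u = 2\beta K z$ and consider $g_\beta(u) := c'_\beta(u)/u$ (extended by $c''_\beta(0)$ at $0$). Positive critical points correspond to $g_\beta(u) = 1/(2\beta K)$. Using the closed form (\ref{eqn:cbeta}), I would verify that for $\beta > \log 4$ the function $g_\beta$ is strictly unimodal on $(0,\infty)$: $g_\beta(0^+) = c''_\beta(0) = 2e^{-\beta}/(1+2e^{-\beta}) < 1/3$, $g_\beta$ increases strictly up to a unique maximum $g_\beta(u^*)>1/3$, then decreases to $0$. Letting $K_1(\beta)$ be the value for which $1/(2\beta K_1(\beta)) = g_\beta(u^*)$, this produces $0$, $1$ (tangentially), or $2$ positive solutions of the fixed-point equation for $K<K_1(\beta)$, $K=K_1(\beta)$, or $K>K_1(\beta)$ respectively; the tangency identifies $K_1(\beta)$ with the implicit metastable value. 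By the second-derivative test, for $K>K_1(\beta)$ the smaller positive critical point $z_-(K)$ is a local maximum and the larger one $z_+(K)$ is a local minimum of $G_{\beta,K}$, while $z=0$ is a local minimum for $K<K_c^{(2)}(\beta)$ (as in (\ref{eqn:kcbeta})) and a local maximum thereafter.

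\textbf{Step 2: the first-order transition and parts (a)--(c).} Consider $F(K) := G_{\beta,K}(z_+(K))$ for $K > K_1(\beta)$. Applying the envelope identity along the critical branch yields
\beas
F'(K) \;=\; \partial_K G_{\beta,K}(z)\Big|_{z=z_+(K)} \;=\; \beta z_+(K)^2 \;-\; 2\beta z_+(K)\, c'_\beta(2\beta K z_+(K)) \;=\; -\beta z_+(K)^2 \;<\; 0.
\eeas
At $K = K_1(\beta)^+$ one has $z_+ = u^*/(2\beta K_1(\beta)) > 0$ and $F(K_1(\beta)) > 0$, whereas as $K \to \infty$ we have $z_+(K) \to 1$ and $F(K) \to -\infty$. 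Hence there is a unique $K_c^{(1)}(\beta) \in (K_1(\beta),\infty)$ with $F(K_c^{(1)}(\beta))=0 = G_{\beta,K_c^{(1)}(\beta)}(0)$. For $K<K_c^{(1)}(\beta)$ every nonzero critical point $z$ of $G_{\beta,K}$ satisfies $G_{\beta,K}(z)>0$, so $\tilde{\mathcal{E}}_{\beta,K}=\{0\}$, proving (a); at $K_c^{(1)}(\beta)$ exactly the three points $\{0,\pm z_+(K_c^{(1)}(\beta))\}$ are minimizers, proving (b) with $z(\beta,K_c^{(1)}(\beta)) := z_+(K_c^{(1)}(\beta))>0$; and for $K>K_c^{(1)}(\beta)$, $F(K)<0$ while $G_{\beta,K}(0)=0$, so $\tilde{\mathcal{E}}_{\beta,K}=\{\pm z(\beta,K)\}$ where $z(\beta,K):=z_+(K)$, proving (c).

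\textbf{Step 3: part (d) and main obstacle.} At every $K \geq K_c^{(1)}(\beta)$, $G''_{\beta,K}(z(\beta,K))>0$ by the local-minimum property, so the implicit function theorem applied to $G'_{\beta,K}(z(\beta,K))=0$ gives smoothness of $K\mapsto z(\beta,K)$. Differentiating implicitly and using that the partial $\partial_K G'_{\beta,K}$ evaluated at $z_+(K)$ has the appropriate sign produces $\partial_K z(\beta,K)>0$, giving strict monotonicity. The left limit at $K_c^{(1)}(\beta)$ is $0$ (since $\tilde{\mathcal{E}}_{\beta,K}=\{0\}$ there) while the right limit is $z(\beta,K_c^{(1)}(\beta))>0$, yielding the discontinuous bifurcation. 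The main technical hurdle is Step 1: rigorously establishing the strict unimodality of $g_\beta$ on $(0,\infty)$ precisely when $\beta > \log 4$. This comes down to showing that $u c''_\beta(u) - c'_\beta(u)$ changes sign exactly once on $(0,\infty)$, which requires a careful calculus argument using the explicit rational-exponential form of $c_\beta$ and the threshold $e^{-\beta}=1/4$; everything downstream is standard envelope-theorem and implicit-function-theorem bookkeeping.
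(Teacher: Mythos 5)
First, a point of reference: the survey does not prove Theorem \ref{thm:firstorder} at all --- it quotes it from Theorem 3.8 of \cite{EOT} --- so your proposal has to be measured against that analysis. Your route (reduce via (\ref{eqn:ebetak}) and (\ref{eqn:gbetak}) to the global minimizers of $G_{\beta,K}$, rewrite positive critical points as solutions of $g_\beta(u)=c_\beta'(u)/u=1/(2\beta K)$, define $K_1(\beta)$ by tangency, and locate $K_c^{(1)}(\beta)$ as the unique zero of $F(K)=G_{\beta,K}(z_+(K))$ using the envelope identity $F'(K)=-\beta z_+(K)^2$) is exactly the standard variational/bifurcation analysis behind the cited result, and the envelope computation, the implicit-function-theorem monotonicity of $z_+(K)$, and the conclusions (a)--(d) all follow correctly from it. Note also that your ``main technical hurdle'' is not really one: strict unimodality of $g_\beta$ on $(0,\infty)$ is immediate from Theorem \ref{thm:concavity}, since $h(u):=u\,c_\beta''(u)-c_\beta'(u)$ satisfies $h(0)=0$ and $h'(u)=u\,c_\beta'''(u)$, which is positive on $(0,w_c(\beta))$ and negative beyond, while $h(u)\to -1$ as $u\to\infty$; so $h$ changes sign exactly once and $g_\beta$ increases and then decreases.

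Two things do need repair. The load-bearing one is the bare assertion $F(K_1(\beta))>0$: this is precisely what forces $K_c^{(1)}(\beta)>K_1(\beta)$ and gives (a) on $(K_1,K_c^{(1)})$, and without an argument the possibility $K_c^{(1)}=K_1$ (i.e.\ part (b) failing) is not excluded. It is, however, easy to supply: by the definition of $K_1$ as the tangency level, $c_\beta'(u)\leq u/(2\beta K_1)$ for all $u\geq 0$, hence $G_{\beta,K_1}'(z)=2\beta K_1\bigl(z-c_\beta'(2\beta K_1 z)\bigr)\geq 0$ on $[0,\infty)$ with equality only at $z=0$ and $z=z^*$, so $G_{\beta,K_1}(z^*)=\int_0^{z^*}G_{\beta,K_1}'(s)\,ds>0$. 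Second, your parenthetical $g_\beta(u^*)>1/3$ is false in general (for large $\beta$ the maximum of $c_\beta'(u)/u$ is of order $1/\beta$); fortunately you never use it --- all you need is $g_\beta(u^*)>g_\beta(0^+)=c_\beta''(0)$, which gives $K_1(\beta)<K_c^{(2)}(\beta)$. Relatedly, the count ``two positive solutions for $K>K_1$'' is only correct for $K_1<K<K_c^{(2)}(\beta)$; once $1/(2\beta K)\leq c_\beta''(0)$ only the larger branch $z_+$ survives. This does not damage Step 2 (which only uses that $\pm z_+$ are the sole nonzero local minima and that any $z_-$, being a strict local maximum reached by an increasing stretch of $G_{\beta,K}$ from $0$, has $G_{\beta,K}(z_-)>0$), but it should be stated correctly.
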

\noi
The phase diagram of the BC model is depicted in Figure \ref{phase}.  The LDP stated in Theorem \ref{thm:ldppnbetak} implies the following weak convergence result used in the proof of rapid mixing in the first-order, discontinuous phase transition region.  It is part (a) of Theorem 6.5 in \cite{EOT}.

\begin{thm}
\label{thm:weakconv}
For $\beta$ and $K$ for which $\tilde{\mathcal{E}}_{\beta, K} = \{ 0 \}$, 
\[ P_{n, \beta, K} \{ S_n/n \in dx \} \Longrightarrow \delta_0 \hsp \mbox{as} \ \ n \goto \infty. \]
\end{thm}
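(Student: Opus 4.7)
The plan is to derive the weak convergence directly from the large deviation principle of Theorem \ref{thm:ldppnbetak}, combined with the hypothesis that the set of equilibrium macrostates reduces to $\{0\}$. The key observation is that weak convergence of a sequence of probability measures on the line to a Dirac mass at $0$ is equivalent to convergence in probability of the associated random variables, and here the LDP upper bound delivers something far stronger, namely exponential decay of the probability of being outside any neighborhood of $0$.

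Concretely, the steps I would carry out are the following. First, fix $\delta > 0$ and consider the closed set
\[ F_\delta = \{z \in [-1,1] : |z| \geq \delta\}. \]
Since $\tilde{\mathcal{E}}_{\beta,K} = \{0\}$, we have $I_{\beta,K}(z) > 0$ for every $z \in F_\delta$. Second, I would invoke the fact that $I_{\beta,K}$ is a rate function (so lower semicontinuous with compact level sets) together with the compactness of $F_\delta$: a lower semicontinuous function attains its infimum on a compact set, so
\[ I_{\beta,K}(F_\delta) = \inf_{z \in F_\delta} I_{\beta,K}(z) =: c_\delta > 0. \]
Third, applying the LDP upper bound \eqref{eqn:upperldp} to $F_\delta$ gives
\[ \limsup_{n \to \infty} \frac{1}{n} \log P_{n,\beta,K}\{|S_n/n| \geq \delta\} \leq -c_\delta < 0, \]
so $P_{n,\beta,K}\{|S_n/n| \geq \delta\} \to 0$ as $n \to \infty$, in fact exponentially fast.

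Finally, since $\delta > 0$ was arbitrary, $S_n/n$ converges to $0$ in probability under $P_{n,\beta,K}$, which is exactly the assertion that the laws $P_{n,\beta,K}\{S_n/n \in dx\}$ converge weakly to $\delta_0$. I do not anticipate any serious obstacle: the only mildly delicate point is justifying that the infimum of $I_{\beta,K}$ over $F_\delta$ is strictly positive, which is immediate from lower semicontinuity of a rate function on the compact set $F_\delta$ together with the hypothesis $\tilde{\mathcal{E}}_{\beta,K} = \{0\}$. Everything else is a direct application of the previously established LDP.
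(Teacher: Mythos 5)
Your proof is correct, and it is precisely the argument the paper has in mind: the paper gives no proof of its own, merely noting that the LDP of Theorem \ref{thm:ldppnbetak} implies the result and citing part (a) of Theorem 6.5 of \cite{EOT}. Your deduction (lower semicontinuity of the rate function on the compact set $\{|z|\geq\delta\}$ gives a strictly positive infimum since $\tilde{\mathcal{E}}_{\beta,K}=\{0\}$, the LDP upper bound then gives exponential decay, and convergence in probability to $0$ is equivalent to weak convergence to $\delta_0$) is the standard way to fill in that implication.
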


\begin{figure}[t]
\begin{center}
\includegraphics[height=2.7in]{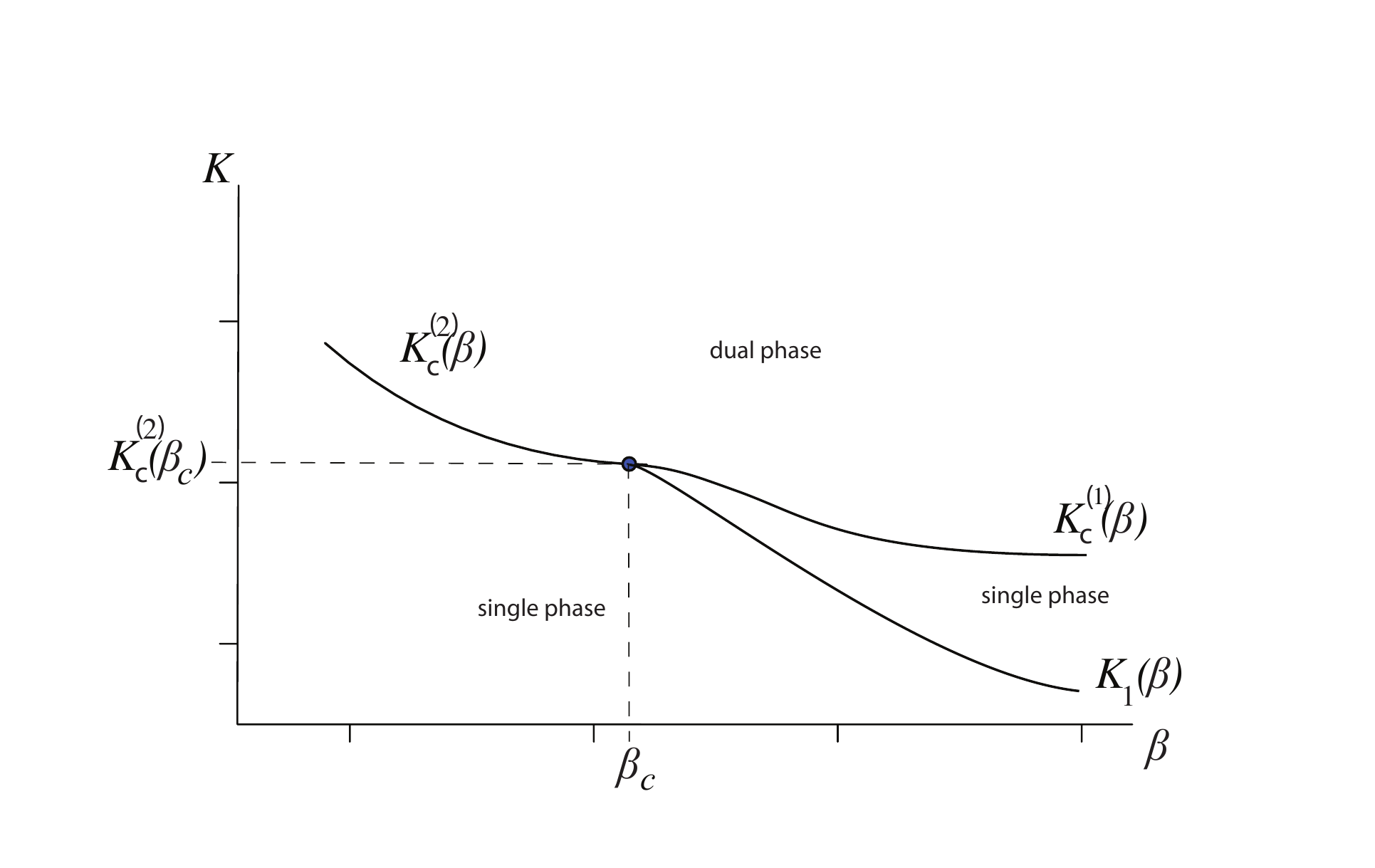}
\caption{\footnotesize Equilibrium phase transition structure of the mean-field Blume-Capel model} \label{phase}
\end{center}
\end{figure} 

\vskip 0.1 in
\noindent
We end this section with a final result that was not included in the original paper \cite{EOT} but will be used in the proof of the slow mixing result for the BC model.  The result states that not only do the global minimum point of $G_{\beta, K}$ and $I_{\beta, K}$ coincide, but so do the local minimum points.

\begin{lemma}
\label{lemma:equalmin}
In the case where $G_{\beta, K}$
and $I_{\beta, K}$ are strictly convex at their minimum points, a point $\tilde{z}$ is a local minimum point of $G_{\beta, K}$ if and only if it is a local minimum point of $I_{\beta, K}$.
\end{lemma}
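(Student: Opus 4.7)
The plan is to exploit the Legendre--Fenchel duality between $c_\beta$ and $J_\beta$ to match up both the critical points and the second-derivative behavior of $I_{\beta,K}$ and $G_{\beta,K}$. Since the quadratic term $-\beta K z^2$ in $I_{\beta,K}$ is the same quadratic that reappears (with the opposite sign) in $G_{\beta,K}$, all differences between the two functions are packaged into the $c_\beta$/$J_\beta$ pair.

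First I would compute the first derivatives directly:
\[
I_{\beta,K}'(z) = J_\beta'(z) - 2\beta K z, \qquad G_{\beta,K}'(z) = 2\beta K z - 2\beta K\, c_\beta'(2\beta K z).
\]
By the standard Legendre duality for the strictly convex, smooth cumulant generating function $c_\beta$ on $(-1,1)$, the relation $J_\beta'(z) = t$ is equivalent to $c_\beta'(t) = z$. Setting $t = 2\beta K z$ gives the equivalence
\[
I_{\beta,K}'(\tilde{z}) = 0 \iff J_\beta'(\tilde{z}) = 2\beta K \tilde{z} \iff \tilde{z} = c_\beta'(2\beta K \tilde{z}) \iff G_{\beta,K}'(\tilde{z}) = 0.
\]
Thus the critical points of the two functions coincide exactly. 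This already recovers the known matching of global minima and extends it to all critical points.

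Next I would compare the second derivatives at a common critical point $\tilde{z}$. Differentiating again yields
\[
I_{\beta,K}''(\tilde{z}) = J_\beta''(\tilde{z}) - 2\beta K, \qquad G_{\beta,K}''(\tilde{z}) = 2\beta K - (2\beta K)^2\, c_\beta''(2\beta K \tilde{z}).
\]
Writing $u = c_\beta''(2\beta K \tilde{z}) > 0$ and invoking the dual identity $J_\beta''(\tilde{z}) \, c_\beta''(2\beta K \tilde{z}) = 1$ (valid at interior points by the smoothness and strict convexity of $c_\beta$), we obtain
\[
I_{\beta,K}''(\tilde{z}) = \frac{1 - 2\beta K u}{u}, \qquad G_{\beta,K}''(\tilde{z}) = 2\beta K \,(1 - 2\beta K u).
\]
Both expressions are positive multiples of the same factor $(1 - 2\beta K u)$, and so share the same sign. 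Consequently, strict positivity of $I_{\beta,K}''(\tilde{z})$ is equivalent to strict positivity of $G_{\beta,K}''(\tilde{z})$.

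Under the lemma's hypothesis that both functions are strictly convex at their minimum points, the second-derivative test then makes a critical point $\tilde{z}$ a local minimum of one function if and only if it is a local minimum of the other. The one subtlety, which I view as the main obstacle to present cleanly, is justifying the Legendre identity $J_\beta'' = 1/c_\beta''$ in our setting: this requires $\tilde{z}$ to lie in the interior of the effective domain $(-1,1)$ of $J_\beta$ and $c_\beta$ to be $C^2$ with strictly positive second derivative on the dual point, both of which follow from the explicit formula for $c_\beta$ recorded above. With this identity in hand, the argument is then a direct equivalence of signs.
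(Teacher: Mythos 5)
Your proposal is correct and follows essentially the same route as the paper's proof: identifying the critical points of $G_{\beta,K}$ and $I_{\beta,K}$ via the Legendre--Fenchel relation $J_\beta' = (c_\beta')^{-1}$, and then matching the second-derivative tests through the dual identity $J_\beta''(\tilde{z}) = 1/c_\beta''(2\beta K \tilde{z})$. The only cosmetic differences are that you state the sign equivalence symmetrically in one pass (the paper argues one direction and reverses it) and that you flag explicitly the interior-point/smoothness justification of the duality identity, which the paper leaves implicit.
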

\bp
Assume that $\tilde{z}$ is a local minimum point of $G_{\beta, K}$.  Then $\tilde{z}$ is a critical point of $G_{\beta, K}$ which implies that $\tilde{z} = c_\beta '(2 \beta K \tilde{z})$.  By the theory of Legendre-Fenchel transforms, $J_\beta '(z) = (c_\beta ')^{-1}(z)$ and thus 
\[ I_{\beta, K}'(\tilde{z}) = J_{\beta} '(\tilde{z}) - 2 \beta K \tilde{z} = (c_\beta ')^{-1}(\tilde{z}) - 2 \beta K \tilde{z} = 0. \]

\vskip 0.1 in
\noindent
Next, since $\tilde{z}$ is a local minimum point of $G_{\beta, K}$, 
\[ G_{\beta, K} ''(\tilde{z}) > 0 \hsp \Longleftrightarrow \hsp c_{\beta}''(2 \beta K \tilde{z}) < \frac{1}{2 \beta K} \]
Therefore,
\[ I_{\beta, K}''(\tilde{z}) = J_{\beta}''(\tilde{z}) - 2 \beta K = \frac{1}{c_{\beta}''(2 \beta K \tilde{z})} - 2 \beta K > 0 \]
and we conclude that $\tilde{z}$ is a local minimum point of $I_{\beta, K}$.  The other direction is obtained by reversing the argument.
\ep

\medskip

\subsection{Glauber Dynamics}  \label{mt}

The Glauber dynamics, defined in general in section \ref{gibbs}, for the mean-field Blume-Capel model evolve by selecting a vertex $i$ at random and updating the spin at $i$ according to the distribution $P_{n, \beta, K}$, conditioned to agree with the spins at all vertices not equal to $i$.  If the current configuration is $\omega$ and vertex $i$ is selected, then the chance of the spin at $i$ is updated to $+1$ is equal to 
\be
\label{eqn:plustran}
p_{+1} (\omega, i) = \frac{e^{2\beta K \tilde{S}(\omega, i)/n }}{e^{2\beta K \tilde{S}(\omega, i)/n} + e^{\beta-(\beta K)/n} + e^{-2\beta K \tilde{S}(\omega, i)/n} }  \ee
where $\tilde{S}(\omega, i) = \sum_{j\neq i} \omega_j$ is the total spin of the neighboring vertices of $i$.
Similarly, the probabilities of $i$ updating to $0$ and $-1$ are
\be 
\label{eqn:zerotran}
p_{0}(\omega, i) = \frac{e^{\beta-(\beta K)/n}}{e^{2\beta K \tilde{S}(\omega, i)/n} + e^{\beta-(\beta K)/n} + e^{-2\beta K \tilde{S}(\omega, i)/n} } \ee
and
\be 
\label{eqn:minustran}
p_{-1}(\omega, i) = \frac{e^{-2\beta K \tilde{S}(\omega, i)/n} }{e^{2\beta K \tilde{S}(\omega, i)/n} + e^{\beta-(\beta K)/n} + e^{-2\beta K \tilde{S}(\omega, i)/n} }
\ee
$p_{+1} (\omega, i)$ is increasing with respect to $\tilde{S}(\omega, i)$, $p_{-1} (\omega, i)$ is decreasing with respect to $\tilde{S}(\omega, i)$, and $p_{0} (\omega, i)$ is decreasing for $\tilde{S}(\omega, i) > 0$ and increasing for $\tilde{S}(\omega, i) < 0$.

\medskip

A classical tool in proving rapid mixing for Markov chains defined on graphs, including the Glauber dynamics of statistical mechanical models, is the path coupling technique discussed in subsection \ref{pathcoupling}.  It will be shown that this technique can be directly applied to the BC model in the second-order, continuous phase transition region but fails in a subset of the first-order, discontinuous phase transition region.  It is for the latter region that we developed the {\it aggregate path coupling}  method to prove rapid mixing.  First, the standard path coupling method for the BC model is introduced in the next section.

\medskip

\subsection{Path Coupling}  \label{pc}

We begin by setting up the coupling rules for the Glauber dynamics of the mean-field Blume-Capel model.  Define the path metric $\rho$ on $\Omega^n = \{-1, 0, 1\}^n$ by 
\be
\label{eqn:pathmetricBC} 
\rho(\sigma, \tau) = \sum_{j=1}^n \big|\sigma_j - \tau_j \big|.
\ee

\begin{remark}
In the original paper \cite{KOT} on the mixing times of the mean-field Blume-Capel model, the incorrect path metric was used.  In that paper, the path metric was defined by 
\[ \tilde{\rho}(\sigma, \tau) = \sum_{j=1}^n \bf{1} \{\sigma_j \neq \tau_j \} \]
With the correct metric defined in (\ref{eqn:pathmetricBC}), the proofs in \cite{KOT} remains the same.
\end{remark}

\medskip

Let $\sigma$ and $\tau$ be two configurations with $\rho(\sigma, \tau) = 1$; i.e. $\sigma$ and $\tau$ are neighboring configurations.  The spins of $\sigma$ and $\tau$ agree everywhere except at a single vertex $i$, where either $\sigma_i=0$ and $\tau_i \not=0$, or $\sigma_i \not=0$ and $\tau_i =0$. Assume that $\sigma_i=0$ and  $\tau_i=1$.  We next describe the path coupling $(X, Y)$ of one step of the Glauber dynamics starting in configuration $\sigma$ with one starting in configuration $\tau$.
Pick a vertex $k$ uniformly at random.  We use a single random variable as the common source of noise to update both chains, so the two chains agree as often as possible.  In particular, let $U$ be a uniform random variable on $[0,1]$ and set
\[ X(k) = \left\{ \begin{array}{rl} -1 & \mbox{if \ $0 \leq U \leq p_{-1}(\sigma, k)$} \\ 0 & \mbox{if \ $p_{-1}(\sigma, k) < U \leq p_{-1}(\sigma, k) + p_{0}(\sigma, k)$} \\ +1 & \mbox{if \ $p_{-1}(\sigma, k) + p_{0}(\sigma, k) < U \leq 1$} \end{array} \right. \]
and
\[ Y(k) = \left\{ \begin{array}{rl} -1 & \mbox{if \ $0 \leq U \leq p_{-1}(\tau, k)$} \\ 0 & \mbox{if \ $p_{-1}(\tau, k) < U \leq p_{-1}(\tau, k) + p_{0}(\tau, k)$} \\ +1 & \mbox{if \ $p_{-1}(\tau, k) + p_{0}(\tau, k) < U \leq 1$} \end{array} \right. \]
Set $X(j) = \sigma_j$ and $Y(j) = \tau_j$ for $j \neq k$.

\vskip 0.2 in
\noindent
Since $\sigma_i < \tau_i$, for all $j \neq i$, $\tilde{S}(\sigma, j) < \tilde{S}(\tau, j)$ and thus
\[ p_{+1}(\tau, k) > p_{+1}(\sigma, k) \ \ \ \mbox{and} \ \ \ p_{-1}(\tau, k) < p_{-1}(\sigma, k) \]
The path metric $\rho$ on the coupling above takes on the following possible values.
\[ \rho (X,Y) = \left\{ \begin{array}{lll} 0 & \mbox{if} & k = i \\ 1 & \mbox{if} & \mbox{$k \neq i$ and both chains updates the same}
\\ 2 & \mbox{if} & \mbox{$k \neq i$ and the chains update differently} \end{array} \right.  \]
Note that since $\sigma$ and $\tau$ are neighbor configurations, $\rho(X,Y) \neq 3$ because the update probabilities of $X$ and $Y$ are sufficiently close.

\medskip

The application of the path coupling technique to prove rapid mixing is dependent on whether the mean coupling distance with respect to the path metric $\rho$, denoted by $\mathbb{E}_{\sigma, \tau} [ \rho(X,Y)] $, contracts over {\it all} pairs of neighboring configurations.

\vskip 0.2 in
\noindent
In the lemma below and following corollary, we derive a working form for the mean coupling distance.

\begin{lemma}
\label{lemma:meanpath}
Let $\rho$ be the path metric defined in {\em (\ref{eqn:pathmetric})} and $(X,Y)$ be the path coupling of one step of the Glauber dynamics of the mean-field Blume-Capel model where $X$ and $Y$ start in neighboring configurations $\sigma$ and $\tau$.  Define 
\be
\label{eqn:varphi} 
\varphi_{\beta, K}(x) = \frac{2\sinh (\frac{2\beta K}{n}x)}{ 2\cosh (\frac{2\beta K}{n} x) + e^{\beta - \frac{\beta K}{n}}} 
\ee
Then 
\[ \mathbb{E}_{\sigma, \tau} [ \rho(X,Y)]  = \frac{n-1}{n} + \frac{(n-1)}{n} [\varphi_{\beta, K}(S_n(\tau)) - \varphi_{\beta, K}(S_n(\sigma))] + O\left(\frac{1}{n^2} \right) \]
\end{lemma}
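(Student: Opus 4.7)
The plan is to condition on which vertex $k$ is chosen by the Glauber update, split into the cases $k=i$ and $k\neq i$ (where $i$ is the unique vertex at which $\sigma$ and $\tau$ disagree), and then identify the combination $p_{+1}(\omega,k)-p_{-1}(\omega,k)$ with $\varphi_{\beta,K}(\tilde{S}(\omega,k))$.

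First I would observe that when $k=i$, the neighboring sum $\tilde{S}(\sigma,i)=\sum_{j\neq i}\sigma_j$ equals $\tilde{S}(\tau,i)=\sum_{j\neq i}\tau_j$ because $\sigma$ and $\tau$ agree off of $i$; hence the three update probabilities $p_{-1},p_0,p_{+1}$ coincide at $(\sigma,i)$ and $(\tau,i)$, and the common-noise coupling yields $X(i)=Y(i)$. Since every other coordinate is left untouched, $\rho(X,Y)=0$ on this event, which has probability $1/n$.

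Next, for each $k\neq i$ the coordinate at $i$ is untouched and still contributes $|\sigma_i-\tau_i|=1$ to $\rho(X,Y)$, while the contribution at $k$ is $|X(k)-Y(k)|$. Using $p_{+1}(\tau,k)>p_{+1}(\sigma,k)$ and $p_{-1}(\tau,k)<p_{-1}(\sigma,k)$, the only discrepant outcomes under the monotone coupling are $(X(k),Y(k))=(-1,0)$, occurring when $U\in(p_{-1}(\tau,k),p_{-1}(\sigma,k)]$, and $(X(k),Y(k))=(0,+1)$, occurring when $U\in(1-p_{+1}(\tau,k),1-p_{+1}(\sigma,k)]$; the outcome $(-1,+1)$ (which would give $\rho=3$) is ruled out by the disjointness of these intervals, which in turn follows from $p_{-1}(\sigma,k)+p_{+1}(\tau,k)<1$ for $n$ large enough, because $p_{+1}(\tau,k)-p_{+1}(\sigma,k)=O(1/n)$ while $p_0(\sigma,k)$ is bounded away from $0$. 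Each discrepant outcome contributes $1$ to $|X(k)-Y(k)|$, giving
\[
\mathbb{E}_{\sigma,\tau}[\,|X(k)-Y(k)|\,]=[p_{-1}(\sigma,k)-p_{-1}(\tau,k)]+[p_{+1}(\tau,k)-p_{+1}(\sigma,k)].
\]
A direct computation from the expressions in (\ref{eqn:plustran})-(\ref{eqn:minustran}) shows $p_{+1}(\omega,k)-p_{-1}(\omega,k)=\varphi_{\beta,K}(\tilde{S}(\omega,k))$, so the bracket above equals $\varphi_{\beta,K}(\tilde{S}(\tau,k))-\varphi_{\beta,K}(\tilde{S}(\sigma,k))$.

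Combining the two cases gives
\[
\mathbb{E}_{\sigma,\tau}[\rho(X,Y)]=\frac{n-1}{n}+\frac{1}{n}\sum_{k\neq i}\bigl[\varphi_{\beta,K}(\tilde{S}(\tau,k))-\varphi_{\beta,K}(\tilde{S}(\sigma,k))\bigr].
\]
To arrive at the form stated in the lemma, the final step is a Taylor expansion replacing $\tilde{S}(\omega,k)=S_n(\omega)-\omega_k$ by $S_n(\omega)$. Because $\varphi_{\beta,K}$ is a smooth bounded function of $(2\beta K/n)x$, each derivative carries a factor $1/n$, so $\varphi_{\beta,K}'=O(1/n)$ and $\varphi_{\beta,K}''=O(1/n^2)$ uniformly on $[-n,n]$. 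Together with $\sigma_k=\tau_k$ for $k\neq i$ and $|S_n(\tau)-S_n(\sigma)|=1$, a second-order expansion yields $\varphi_{\beta,K}(\tilde{S}(\tau,k))-\varphi_{\beta,K}(\tilde{S}(\sigma,k))=\varphi_{\beta,K}(S_n(\tau))-\varphi_{\beta,K}(S_n(\sigma))+O(1/n^2)$ term by term, and summing over $k\neq i$ and dividing by $n$ gives the claimed formula. The only non-routine point is verifying that the $\rho=3$ event is genuinely excluded and that the error terms assemble into a single $O(1/n^2)$, both of which reduce to careful bookkeeping of the $1/n$ factors in $\varphi_{\beta,K}$ and its derivatives.
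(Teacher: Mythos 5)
Your proposal is correct and follows essentially the same route as the paper's proof: condition on the selected vertex, express the probability of a discrepant update as the difference of the cumulative update probabilities under the common-noise coupling, identify $p_{+1}-p_{-1}=\varphi_{\beta,K}(\tilde{S})$, and Taylor-expand to replace $\tilde{S}(\omega,k)$ by $S_n(\omega)$ with an $O(1/n^2)$ error (the paper organizes the sum by grouping vertices of equal spin, which is only a cosmetic difference). Your explicit verification that the $(-1,+1)$ discrepancy (the $\rho=3$ event) is excluded is a welcome detail that the paper only asserts.
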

\bp
Let $n_{-1}, n_0$ and $n_{+1}$ denote the number of $-1, 0$ and $+1$ spins, respectively, in configuration $\sigma$, not including the spin at vertex $i$, where the configurations differ.  Note that $n_{-1} + n_0 + n_{+1} = n-1$.  

\vskip 0.2 in
\noindent
Define $\varepsilon(-1)$ to be the probability that $X$ and $Y$ update differently when the chosen vertex $k \neq i$ is a $-1$ spin.  Similarly, define $\varepsilon(0)$ and $\varepsilon(+1)$.  Then the mean coupling distance can be expressed as
\beas
\mathbb{E}_{\sigma, \tau} [ \rho(X,Y)] & = & \frac{n_{-1}}{n} (1 - \varepsilon(-1)) +  \frac{n_{0}}{n} (1 - \varepsilon(0)) +  \frac{n_{+1}}{n} (1 - \varepsilon(+1)) \\
 & & + 2 \left[ \frac{n_{-1}}{n} \varepsilon(-1) +  \frac{n_{0}}{n} \varepsilon(0) +  \frac{n_{+1}}{n} \varepsilon(+1) \right] \\
 & = & \frac{n-1}{n} + \frac{n_{-1}}{n} \varepsilon(-1) +  \frac{n_{0}}{n} \varepsilon(0) +  \frac{n_{+1}}{n} \varepsilon(+1)
\eeas
The probability that $X$ and $Y$ update differently when the chosen vertex $k \neq i$ is a $-1$ spin is given by
\beas 
\ve(-1) & = & \left[ p_{-1}(\sigma, k) - p_{-1}(\tau, k) \right] + \left[(p_{-1}(\sigma, k) + p_{0}(\sigma, k)) - (p_{-1}(\tau, k) + p_{0}(\tau, k)) \right] \\
& = & [ p_{+1}(\tau, k) - p_{+1}(\sigma, k) ] + \left[ p_{-1}(\sigma, k) - p_{-1}(\tau, k) \right] \\
& = & [ p_{+1}(\tau, k) - p_{-1}(\tau, k) ] + \left[ p_{-1}(\sigma, k) -  p_{+1}(\sigma, k) \right] \\
& = & \frac{2\sinh (\frac{2\beta K}{n}(S_n(\tau)+1))}{ 2\cosh (\frac{2\beta K}{n} (S_n(\tau)+1)) + e^{\beta - \frac{\beta K}{n}}} - \frac{2\sinh (\frac{2\beta K}{n}(S_n(\sigma)+1))}{ 2\cosh (\frac{2\beta K}{n} (S_n(\sigma)+1)) + e^{\beta - \frac{\beta K}{n}}} \\
& = & \varphi_{\beta, K} ((S_n(\tau) +1)) - \varphi_{\beta, K} ((S_n(\sigma) +1))\\
& = & \varphi_{\beta, K} (S_n(\tau) ) - \varphi_{\beta, K} (S_n(\sigma))+ O\left(\frac{1}{n^2} \right)
\eeas
Similarly, we have 
$$\ve(0) =\varphi_{\beta, K} (S_n(\tau) ) - \varphi_{\beta, K} (S_n(\sigma))$$
 and 
 $$\ve(+1) = \varphi_{\beta, K} ((S_n(\tau) -1)) - \varphi_{\beta, K} ((S_n(\sigma) -1)) =  \varphi_{\beta, K} (S_n(\tau) ) - \varphi_{\beta, K} (S_n(\sigma))+ O\left(\frac{1}{n^2} \right)$$  
and the proof is complete.
\ep

\noindent
For $c_\beta$ defined in (\ref{eqn:cbeta}), we have
\[ \varphi_{\beta,K}(x)= c_\beta ' \left( \frac{2 \beta K}{n} x \right) (1+O(1/n)) \]
which yields the following corollary.

\medskip

\begin{cor} \label{cor:meanpath}
Let $\rho$ be the path metric defined in {\em (\ref{eqn:pathmetric})} and $(X,Y)$ be the path coupling where $X$ and $Y$ start in neighboring configurations $\sigma$ and $\tau$.  Then 
\[ \mathbb{E}_{\sigma, \tau} [ \rho(X,Y)]  = \frac{n-1}{n} + \frac{(n-1)}{n} \left[c_\beta ' \left( 2\beta K \frac{S_n(\tau)}{n} \right) - c_\beta ' \left( 2 \beta K \frac{S_n(\sigma)}{n} \right) \right] + O\left(\frac{1}{n^2} \right). \]
\end{cor}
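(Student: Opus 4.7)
The plan is to derive the corollary as a direct consequence of Lemma~\ref{lemma:meanpath} by replacing $\varphi_{\beta,K}$ with $c_\beta'$ modulo controllable error. The only ingredient needed is the explicit relationship between these two functions, which is essentially algebraic.

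First, I would compute $c_\beta'$ explicitly from the definition in (\ref{eqn:cbeta}): differentiating $c_\beta(t) = \log\bigl[(1+e^{-\beta}(e^t+e^{-t}))/(1+2e^{-\beta})\bigr]$ gives
\[
c_\beta'(t) \;=\; \frac{2\sinh t}{e^\beta + 2\cosh t}.
\]
Comparing this with the definition of $\varphi_{\beta,K}$ in (\ref{eqn:varphi}) shows that the only discrepancy between $\varphi_{\beta,K}(x)$ and $c_\beta'(2\beta K x/n)$ is the factor $e^{-\beta K/n}$ attached to $e^\beta$ in the denominator. Since $e^{-\beta K/n} = 1 + O(1/n)$, a short manipulation gives
\[
\varphi_{\beta,K}(x) \;=\; c_\beta'\!\left(\tfrac{2\beta K}{n}x\right)\bigl(1 + \eta_n(x)\bigr),
\qquad \eta_n(x) = O(1/n)
\]
uniformly in $x$ with $|x|\le n$, which is the identity mentioned in the paragraph preceding the corollary.

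Next, I would apply this identity to $x = S_n(\sigma)$ and $x = S_n(\tau)$. Because $\sigma$ and $\tau$ are neighboring configurations differing at exactly one vertex, the arguments $u_\sigma = 2\beta K S_n(\sigma)/n$ and $u_\tau = 2\beta K S_n(\tau)/n$ satisfy $u_\tau - u_\sigma = O(1/n)$. Smoothness of $c_\beta'$ and of the explicit rational expression for $\eta_n$ then yields $c_\beta'(u_\tau) - c_\beta'(u_\sigma) = O(1/n)$ and $\eta_n(S_n(\tau)) - \eta_n(S_n(\sigma)) = O(1/n^2)$. Taking the difference,
\[
\varphi_{\beta,K}(S_n(\tau)) - \varphi_{\beta,K}(S_n(\sigma))
= \bigl[c_\beta'(u_\tau) - c_\beta'(u_\sigma)\bigr] + O(1/n^2),
\]
where the error absorbs both the cross-term $c_\beta'(u_\sigma)[\eta_n(S_n(\tau))-\eta_n(S_n(\sigma))]$ and the product $\eta_n(S_n(\tau))[c_\beta'(u_\tau)-c_\beta'(u_\sigma)]$, each of which is a product of two $O(1/n)$ quantities.

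Finally, I would substitute this into the formula of Lemma~\ref{lemma:meanpath}. The prefactor $(n-1)/n$ is bounded, so the $O(1/n^2)$ correction survives at the same order, and it merges with the existing $O(1/n^2)$ remainder. The main piece of care in the argument is the error bookkeeping in the previous step: a naive bound would give $O(1/n)$ rather than $O(1/n^2)$, and one must exploit the fact that the two arguments are only $O(1/n)$ apart (so that both $c_\beta'$ and $\eta_n$ vary by $O(1/n)$ between them) to recover the stated rate. Beyond that, everything is a transparent substitution and Taylor estimate.
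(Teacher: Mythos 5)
Your proposal is correct and follows essentially the same route as the paper, which derives the corollary from Lemma~\ref{lemma:meanpath} via the one-line identity $\varphi_{\beta,K}(x)= c_\beta'\!\left(\tfrac{2\beta K}{n}x\right)(1+O(1/n))$. You merely make explicit the error bookkeeping (that the $O(1/n)$ multiplicative corrections cancel to $O(1/n^2)$ in the difference because $S_n(\tau)-S_n(\sigma)=O(1)$, so the arguments differ by $O(1/n)$), which the paper leaves implicit.
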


\noi
By the above corollary, we conclude that the mean coupling distance of a coupling starting in neighboring configurations contracts; i.e. $\mathbb{E}_{\sigma, \tau}[\rho(X,Y)] < \rho(\sigma, \tau) = 1$, if 

\[ \left[c_\beta ' \left( 2\beta K \frac{S_n(\tau)}{n} \right) - c_\beta ' \left( 2 \beta K \frac{S_n(\sigma)}{n} \right) \right] \approx 2 \beta K \left[ \frac{S_n(\tau)}{n} - \frac{S_n(\sigma)}{n} \right] c_\beta ''\left(2 \beta K \frac{S_n(\sigma)}{n} \right) < \frac{1}{n-1} \]

\noi
Since $\sigma$ and $\tau$ are neighboring configurations and $S_n (\tau) > S_n(\sigma)$, this is equivalent to 
\be
\label{eqn:cpp} 
c_\beta ''\left(2 \beta K \frac{S_n(\sigma)}{n} \right) < \frac{1}{2 \beta K} 
\ee
Therefore, contraction of the mean coupling distance, and thus rapid mixing, depends on the concavity behavior of the function $c_\beta'$.  This is also precisely what determines the type of thermodynamic equilibrium phase transition (continuous, second-order versus discontinuous, first-order) that is exhibited by the mean-field Blume-Capel model.  We state the concavity behavior of $c_\beta'$ in the next theorem which is proved in Theorem 3.5 in \cite{EOT}.  The results of the theorem are depicted in Figure \ref{cprime}

\begin{thm} 
\label{thm:concavity}
For $\beta > \beta_c = \log 4$ define 
\be
\label{eqn:wcrit}
w_c(\beta) = \cosh^{-1} \! \left( \frac{1}{2} e^\beta -
4e^{-\beta}  \right) \geq 0.  
\ee 
The following conclusions hold.

{\em (a)} For $0 < \beta \leq \beta_c$, $c_\beta'(w)$ is strictly concave  for
$w > 0$.

{\em (b)} For $\beta > \beta_c$,
$c_\beta'(w)$ is strictly convex for $0 < w < w_c(\beta)$ \ 
and $c_\beta'(w)$ is strictly concave for $w > w_c(\beta)$.
\end{thm}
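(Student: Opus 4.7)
The plan is to differentiate $c_\beta$ explicitly and reduce the statement to the sign of a simple function of $\cosh(w)$. Starting from $c_\beta(t)=\log[1+e^{-\beta}(e^t+e^{-t})]-\log[1+2e^{-\beta}]$, a direct computation gives
$$c_\beta'(w)=\frac{2\sinh w}{e^\beta+2\cosh w}.$$
Differentiating once more and using $\cosh^2 w-\sinh^2 w=1$ collapses the numerator nicely:
$$c_\beta''(w)=\frac{2(e^\beta\cosh w+2)}{(e^\beta+2\cosh w)^2}.$$
This form makes the next step clean: treat $c_\beta''$ as $f(\cosh w)$ where $f(u)=2(au+2)/(a+2u)^2$ with $a=e^\beta$.

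Next, I would apply the chain rule to get $c_\beta'''(w)=f'(\cosh w)\sinh w$. A routine differentiation of $f$ yields
$$f'(u)=\frac{2(a^2-2au-8)}{(a+2u)^3}.$$
For $w>0$ we have $\sinh w>0$ and the denominator is positive, so the sign of $c_\beta'''(w)$ equals the sign of the linear expression $a^2-2a\cosh w -8$ in $\cosh w$. Thus $c_\beta'''(w)$ changes sign precisely when $\cosh w=\tfrac{a^2-8}{2a}=\tfrac12 e^\beta-4e^{-\beta}$, which is exactly $w_c(\beta)$ whenever this quantity is at least $1$.

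To finish, I would determine when this critical threshold lies in the admissible range $[1,\infty)$ of $\cosh w$. Setting $y=e^\beta$, the condition $\tfrac12 e^\beta-4e^{-\beta}\geq 1$ is equivalent to $y^2-2y-8\geq 0$, i.e.\ $(y-4)(y+2)\geq 0$, i.e.\ $\beta\geq\log 4=\beta_c$. So for $0<\beta\leq\beta_c$ we have $\tfrac12 e^\beta-4e^{-\beta}\leq 1\leq\cosh w$ with strict inequality for $w>0$, hence $a^2-2a\cosh w-8<0$ and $c_\beta'''(w)<0$ throughout $w>0$, proving part (a). For $\beta>\beta_c$, the threshold $\tfrac12 e^\beta-4e^{-\beta}$ strictly exceeds $1$, so $\cosh w$ crosses it at exactly $w_c(\beta)>0$: the quantity $a^2-2a\cosh w-8$ is strictly positive for $0<w<w_c(\beta)$ and strictly negative for $w>w_c(\beta)$, giving the stated convexity/concavity dichotomy in part (b).

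The calculations above are mechanical; the only subtlety is verifying that the root $u=\tfrac{a^2-8}{2a}$ lies in $[1,\infty)$ precisely when $\beta\geq\log 4$, which pins down the phase-transition threshold $\beta_c$. I expect no genuine obstacle, merely bookkeeping in the chain-rule differentiation and the hyperbolic identity that trims $c_\beta''$ to a tractable form. The key algebraic observation driving the whole argument is the identity $\cosh^2 w-\sinh^2 w=1$, which produces the clean linear (in $\cosh w$) sign-determining factor of $c_\beta'''$.
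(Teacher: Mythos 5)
Your proof is correct: the formulas $c_\beta'(w)=\tfrac{2\sinh w}{e^\beta+2\cosh w}$, $c_\beta''(w)=\tfrac{2(e^\beta\cosh w+2)}{(e^\beta+2\cosh w)^2}$, and the sign factor $e^{2\beta}-2e^\beta\cosh w-8$ for $c_\beta'''$ all check out, and the threshold analysis $(e^\beta-4)(e^\beta+2)\geq 0$ correctly identifies $\beta_c=\log 4$ and $w_c(\beta)$. The survey defers this statement to Theorem 3.5 of \cite{EOT}, and your argument is essentially that same direct differentiation-and-sign-analysis route, so nothing further is needed.
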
 

\noi
By part (a) of the above theorem, for $\beta \leq \beta_c$, $~~c_\beta''(x)\leq c_\beta''(0) = 1/(2 \beta K_c^{(2)}(\beta))$.  Therefore, by (\ref{eqn:cpp}), the mean coupling distance contracts between {\it all} pairs of neighboring states whenever $K<K_c^{(2)}(\beta)$. 

\vskip 0.2 in
\noi
By contrast, for $\beta>\beta_c$, we will show that rapid mixing occurs whenever $K < K_1(\beta)$ where $K_1(\beta)$ is the metastable critical value introduced in Subsection \ref{eps} and depicted in Figure \ref{discont}.   However, since the supremum  $~~\sup_{[-1,1]} c_\beta''(x)>{1 \over 2\beta K_1(\beta)}$, the condition $K<K_1(\beta)$ is not sufficient for (\ref{eqn:cpp}) to hold. That is, $K<K_1(\beta)$ does not imply the contraction of the mean coupling distance between {\it all} pairs of neighboring states.
However, we prove rapid mixing for all $K<K_1(\beta)$ in Subsection \ref{rapid2} by using an extension to the path coupling method that we refer to as {\it aggregate path coupling}.

\begin{figure}[t]
\begin{center}
\includegraphics[height=2.5in]{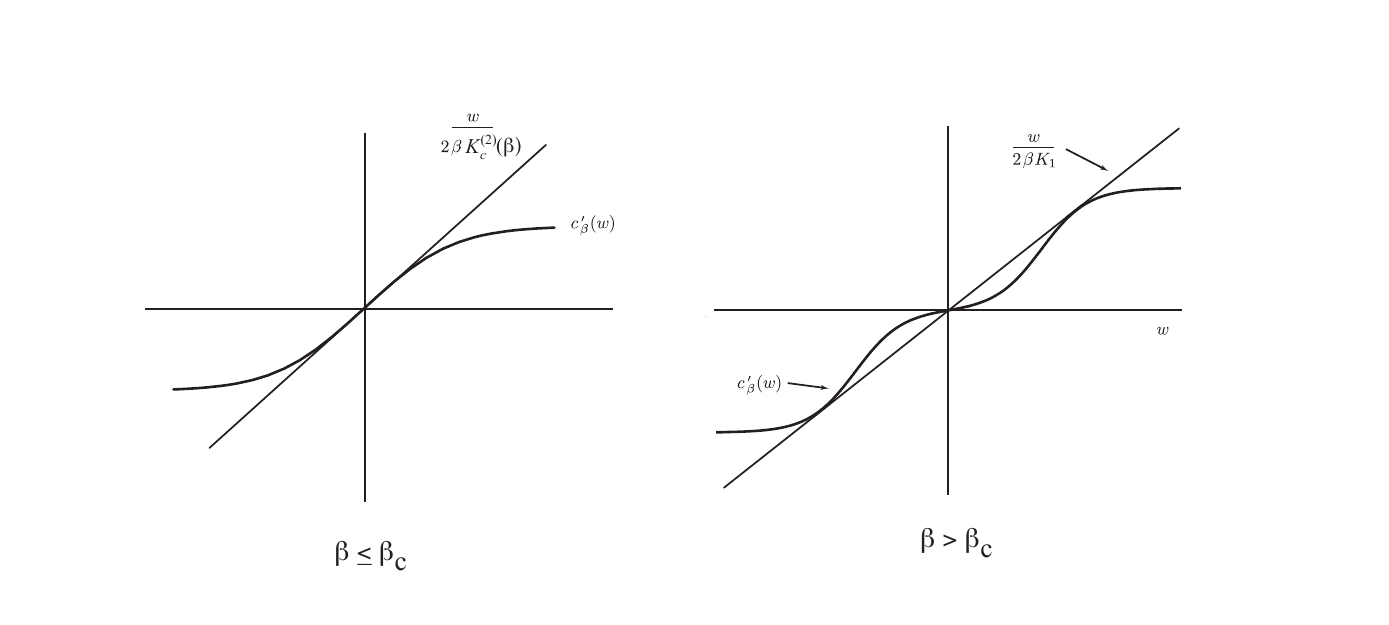}
\caption{\footnotesize Behavior of $c_\beta'(w)$ for large and small $\beta$.} \label{cprime}
\end{center}
\end{figure} 

\vskip 0.2 in
\noi
We now prove the mixing times for the mean-field Blume-Capel model, which varies depending on the parameter values $(\beta, K)$ and their position with respect to the thermodynamic phase transition curves.  We begin with the case $\beta \leq \beta_c$ where the model undergoes a continuous, second-order phase transition and $K \leq K_c^{(2)}(\beta)$ which corresponds to the single phase region.

\medskip

\subsection{Standard Path Coupling in the Continuous Phase Transition Region} \label{rapid1}  

\medskip

We begin by stating the standard path coupling argument used to prove rapid mixing for the mean-field Blume-Capel model in the continuous, second-order phase transition region.  The result is proved in Proposition \ref{classicalPC}.

\begin{thm}
\label{thm:path}
Suppose the state space $\Omega$ of a Markov chain is the vertex set of a graph with path metric $\rho$.  Suppose that for each edge $\{\sigma,\tau\}$ there exists a coupling $(X,Y)$ of the distributions $P(\sigma, \cdot)$ and $P(\tau, \cdot)$ such that 
\[ \mathbb{E}_{\sigma, \tau}[\rho (X,Y)] \leq \rho(\sigma, \tau) e^{-\alpha} \quad \text{ for some }\alpha>0 \]
Then
\[ t_{\text{{\em \mix}}}(\ve) \leq \left\lceil \frac{-\log (\ve) + \log (\mbox{{\em diam}}(\Omega))}{\alpha} \right\rceil \]
\end{thm}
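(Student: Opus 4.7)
The plan is to mimic the argument in Proposition \ref{classicalPC}, with the single modification that the neighbor-wise contraction factor $1-\delta(\Omega)$ is replaced by $e^{-\alpha}$. This actually makes the bookkeeping slightly cleaner, since we bypass the step $-\log(1-\delta(\Omega))\geq \delta(\Omega)$ used at the end of that earlier proof.

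First I would extend contraction from neighboring pairs to arbitrary pairs via the path metric. Fix any $\sigma,\tau\in\Omega$ and choose a path realizing the infimum,
$$\rho(\sigma,\tau)=\sum_{i=1}^{r}\rho(x_{i-1},x_i),\qquad \sigma=x_0\sim x_1\sim\cdots\sim x_r=\tau.$$
Using the transportation metric $d_K$ from Section \ref{pathcoupling} together with the triangle inequality $d_K(\sigma,\tau)\leq\sum_i d_K(x_{i-1},x_i)$, and then applying the neighbor-wise hypothesis $d_K(x_{i-1},x_i)\leq e^{-\alpha}\rho(x_{i-1},x_i)$ edge by edge, yields
$$\mathbb{E}_{\sigma,\tau}[\rho(X_1,Y_1)]\;=\;d_K(\sigma,\tau)\;\leq\;e^{-\alpha}\sum_{i=1}^{r}\rho(x_{i-1},x_i)\;=\;e^{-\alpha}\rho(\sigma,\tau).$$

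Next I would iterate this one-step contraction. Running the Markov chain coupling for $t$ steps and conditioning successively on $(X_s,Y_s)$ gives
$$\mathbb{E}[\rho(X_t,Y_t)]\;\leq\; e^{-\alpha t}\,\rho(X_0,Y_0)\;\leq\; e^{-\alpha t}\,\mathrm{diam}(\Omega)$$
for every initial pair $(X_0,Y_0)$. Since $\rho(x,y)\geq 1$ whenever $x\neq y$, Markov's inequality gives
$$P(X_t\neq Y_t)\;=\;P(\rho(X_t,Y_t)\geq 1)\;\leq\;\mathbb{E}[\rho(X_t,Y_t)]\;\leq\; e^{-\alpha t}\,\mathrm{diam}(\Omega).$$
The right side is $\leq\ve$ precisely when $t\geq (\log\mathrm{diam}(\Omega)-\log\ve)/\alpha$.

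Finally I would invoke the Coupling Inequality from Section \ref{mixtimepathcoupling}: taking a maximum over starting pairs gives $\bar d(t)\leq \max_{x,y}P(X_t\neq Y_t)$, which combined with the Lemma relating $d(t)$ and $\bar d(t)$ bounds the mixing time. Choosing $t=\lceil(-\log\ve+\log\mathrm{diam}(\Omega))/\alpha\rceil$ then forces $d(t)\leq\ve$, establishing the claimed bound. There is no real obstacle here; the only point that warrants care is the triangle-inequality step for $d_K$, which must be justified by concatenating the couplings across the minimal path, exactly as in the proof of Proposition \ref{classicalPC}.
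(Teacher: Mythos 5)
Your proposal is correct and follows essentially the same route as the paper, which proves this theorem by the argument of Proposition \ref{classicalPC}: contract along a metric-minimizing path using the neighbor-wise hypothesis, iterate, apply Markov's inequality, and conclude via the Coupling Inequality. The only difference is cosmetic — with the factor $e^{-\alpha}$ you skip the final estimate $-\log\bigl(1-\delta(\Omega)\bigr)\geq\delta(\Omega)$ used there.
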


\vskip 0.2 in
\noi
In this section, we assume $\beta \leq \beta_c$ which implies that the BC model undergoes a continuous, second-order phase transition at $K = K_c^{(2)}(\beta)$ defined in (\ref{eqn:kcbeta}).  By Theorem \ref{thm:concavity}, for $\beta \leq \beta_c$, $c_\beta'(x)$ is concave for $x > 0$.  See the first graph of Figure \ref{cprime} as reference. 
We next state and prove the rapid mixing result for the mean-field Blume-Capel model in the second-order, continuous phase transition regime.

\begin{thm}
\label{thm:mfBC}
Let $t_{mix}(\ve)$ be the mixing time for the Glauber dynamics of the mean-field Blume-Capel model on $n$ vertices and $K_c^{(2)}(\beta)$ the continuous phase transition curve defined in {\em (\ref{eqn:kcbeta})}.  Then for $\beta \leq \beta_c = \log 4$ and $K < K_c^{(2)}(\beta)$,
\[ t_{mix}(\ve) \leq {n \over \alpha} (\log n + \log (1/\ve)) \]
for any $\alpha \in \left(0, {K_c^{(2)}(\beta) -K  \over K_c^{(2)}(\beta)}\right)$ and $n$ sufficiently large.
\end{thm}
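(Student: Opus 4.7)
The plan is to directly apply the standard path coupling framework (Theorem \ref{thm:path}) by combining the neighbor-coupling expression from Corollary \ref{cor:meanpath} with the concavity information about $c_\beta'$ from Theorem \ref{thm:concavity}(a). The target is a per-step contraction factor of the form $1 - \alpha/n + O(1/n^2)$ valid uniformly over all pairs of neighboring configurations.

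First I would fix neighboring configurations $\sigma \sim \tau$, so $\rho(\sigma,\tau)=1$ and $|S_n(\tau)-S_n(\sigma)|=1$. By Corollary \ref{cor:meanpath},
\[
\mathbb{E}_{\sigma,\tau}[\rho(X,Y)] = \frac{n-1}{n} + \frac{n-1}{n}\Bigl[c_\beta'\bigl(2\beta K S_n(\tau)/n\bigr) - c_\beta'\bigl(2\beta K S_n(\sigma)/n\bigr)\Bigr] + O(1/n^2).
\]
Applying the Mean Value Theorem to the bracketed difference yields
\[
c_\beta'\bigl(2\beta K S_n(\tau)/n\bigr) - c_\beta'\bigl(2\beta K S_n(\sigma)/n\bigr) = \frac{2\beta K}{n}\,\bigl(S_n(\tau)-S_n(\sigma)\bigr)\, c_\beta''(\xi)
\]
for some $\xi$ lying between $2\beta K S_n(\sigma)/n$ and $2\beta K S_n(\tau)/n$.

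Next I would invoke Theorem \ref{thm:concavity}(a) together with the fact that $c_\beta$ is even (so $c_\beta'$ is odd and $c_\beta''$ is even). Concavity of $c_\beta'$ on $w>0$ for $\beta\leq\beta_c$ gives that $c_\beta''$ is decreasing in $|w|$, so $c_\beta''(\xi)\leq c_\beta''(0)=1/(2\beta K_c^{(2)}(\beta))$. Since $|S_n(\tau)-S_n(\sigma)|=1$, plugging this bound in gives
\[
\mathbb{E}_{\sigma,\tau}[\rho(X,Y)] \leq \frac{n-1}{n} + \frac{n-1}{n}\cdot\frac{1}{n}\cdot\frac{K}{K_c^{(2)}(\beta)} + O(1/n^2) = 1 - \frac{1}{n}\left(\frac{K_c^{(2)}(\beta)-K}{K_c^{(2)}(\beta)}\right) + O(1/n^2).
\]
Consequently, for any fixed $\alpha \in (0, (K_c^{(2)}(\beta)-K)/K_c^{(2)}(\beta))$ and $n$ large, we have $\mathbb{E}_{\sigma,\tau}[\rho(X,Y)] \leq 1 - \alpha/n \leq e^{-\alpha/n} = \rho(\sigma,\tau)\,e^{-\alpha/n}$.

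Finally I would close by feeding this into Theorem \ref{thm:path}. The diameter of $\Omega^n=\{-1,0,1\}^n$ under $\rho$ satisfies $\mathrm{diam}(\Omega)\leq 2n$, so
\[
t_{\mix}(\ve) \leq \left\lceil \frac{\log(2n) + \log(1/\ve)}{\alpha/n} \right\rceil \leq \frac{n}{\alpha}\bigl(\log n + \log(1/\ve)\bigr)
\]
for $n$ large enough to absorb the $\log 2$ term and the $O(1/n^2)$ error. There is no real obstacle here beyond being careful about two points: confirming that the maximum of $c_\beta''$ on $[-1,1]$-images indeed occurs at $0$ (which is where symmetry plus Theorem \ref{thm:concavity}(a) is used), and verifying that the $O(1/n^2)$ correction can be absorbed into the choice of any $\alpha$ strictly less than the critical ratio $(K_c^{(2)}(\beta)-K)/K_c^{(2)}(\beta)$. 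Both are straightforward, so the argument is essentially a direct verification of the hypothesis of classical path coupling in the subcritical regime.
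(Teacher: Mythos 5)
Your proposal is correct and follows essentially the same route as the paper: Corollary \ref{cor:meanpath} combined with the mean value theorem, the bound $\sup_x c_\beta''(x)=c_\beta''(0)=1/(2\beta K_c^{(2)}(\beta))$ from Theorem \ref{thm:concavity}(a) and evenness, and then the standard path coupling bound of Theorem \ref{thm:path}. The only (harmless) difference is your use of $\mathrm{diam}(\Omega)\leq 2n$ under the metric (\ref{eqn:pathmetricBC}) where the paper states $n$; the extra $\log 2$ is absorbed for large $n$ exactly as you note.
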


\begin{proof}
Let $(X,Y)$ be a coupling of the Glauber dynamics of the BC model that begin in neighboring configurations $\sigma$ and $\tau$ with respect to the path metric $\rho$ defined in (\ref{eqn:pathmetric}).  By Corollary \ref{cor:meanpath} of Lemma \ref{lemma:meanpath}, 
\[
\mathbb{E}_{\sigma, \tau} [ \rho(X,Y)]  = 1 - \left( \frac{1}{n} - \frac{(n-1)}{n} \left[c_\beta ' \left( 2\beta K \frac{S_n(\tau)}{n} \right) - c_\beta ' \left( 2 \beta K \frac{S_n(\sigma)}{n} \right) \right] \right) + O\left(\frac{1}{n^2} \right) \]
Observe that $c_\beta ''$ is an even function and that for $\beta \leq \beta_c$, ${\dstyle \sup_x c_\beta ''(x) = c_\beta ''(0)}$.  Therefore, by the mean value theorem and Theorem \ref{thm:secondorder},
\beas 
\mathbb{E}_{\sigma, \tau} [ \rho(X,Y)] & \leq & 1 - \frac{[1 - (n-1)(2\beta K /n) c_\beta ''(0)]}{n} + O\left(\frac{1}{n^2} \right) \\
& \leq & \exp \left\{- \frac{1-2\beta K c_\beta ''(0)}{n} + O\left( \frac{1}{n^2} \right) \right\} \\
& = &   \exp \left\{\frac{1}{n} \left({K_c^{(2)}(\beta) -K  \over K_c^{(2)}(\beta)} \right) + O \left(\frac{1}{n^2} \right) \right\} \\
& < & e^{-\alpha/n}
\eeas
for any $\alpha \in \left(0, {K_c^{(2)} (\beta) -K  \over K_c^{(2)}(\beta)}\right)$ and $n$ sufficiently large.
Thus, for $K < K_c^{(2)}(\beta)$, we can apply Theorem \ref{thm:path}, where the diameter of the configuration space of the BC model $\Omega^n$ is $n$, to complete the proof.
\ep

\medskip

\subsection{Aggregate Path Coupling in the Discontinuous Phase Transition Region} \label{rapid2}  

\medskip

Here we consider the region $\beta>\beta_c$, where the mean-field Blume-Capel model undergoes a first-order discontinuous phase transition.  In this region, the function $c_\beta'(x)$ which determines whether the mean coupling distance contracts (Corollary \ref{cor:meanpath}) is no longer strictly concave for $x > 0$ (Theorem \ref{thm:concavity}).  See the second graph in Figure \ref{cprime} for reference.  We will show that rapid mixing occurs whenever $K < K_1(\beta)$ where $K_1(\beta)$ is the metastable critical value defined in subsection \ref{eps} and depicted in Figure \ref{discont}.

\vskip 0.2 in
\noi
As shown in Section \ref{pc}, in order to apply the standard path coupling technique of Theorem \ref{thm:path}, we need the inequality (\ref{eqn:cpp}) to hold for all values of $S_n(\sigma)$ and thus $~~\sup_{[-1,1]} c_\beta''(x)<{1 \over 2\beta K}$.  However since $~~\sup_{[-1,1]} c_\beta''(x)>{1 \over 2\beta K_1(\beta)}$, the condition $K<K_1(\beta)$ is not sufficient for the contraction of the mean coupling distance between {\it all pairs} of neighboring states which is required to prove rapid mixing using the standard path coupling technique stated in Theorem \ref{thm:path}. 

\vskip 0.2 in
\noi 
In order to prove rapid mixing in the region where $\beta>\beta_c$ and $K<K_1(\beta)$, we take advantage of the result in Theorem \ref{thm:weakconv} which states the weak convergence of the magnetization $S_n/n$ to a point-mass at the origin.   Thus, in the coupling of the dynamics, the magnetization of the process that starts at equilibrium will stay mainly near the origin.  As a result, for two starting configurations $\sigma$ and $\tau$, one of which has near-zero magnetization ($S_n(\sigma)/n \approx 0$), the mean coupling distance of a coupling starting in these configurations will be the aggregate of the mean coupling distances between neighboring states along a minimal path connecting the two configurations. Although not all pairs of neighbors in the path will contract, we show that in the {\it aggregate}, contraction between the two configurations still holds.

\vskip 0.2 in
\noi 
In the next lemma we prove contraction of the mean coupling distance in the aggregate and then the rapid mixing result for the mean-field Blume-Capel model is proved in the theorem following the lemma by applying the new aggregate path coupling method.

\begin{lemma}
\label{lemma:aggregate}
Let $(X,Y)$ be a coupling of one step of the Glauber dynamics of the BC model that begin in configurations $\sigma$ and $\tau$, not necessarily neighbors with respect to the path metric $\rho$ defined in {\em (\ref{eqn:pathmetric})}.  Suppose $\beta > \beta_c$ and $K < K_1(\beta)$. Then for any $\alpha \in \left(0,{K_1(\beta)-K \over K_1(\beta)}\right)$ there exists an $\varepsilon>0$ such that, asymptotically as $n \goto \infty$,
\be
\label{eqn:contract} 
\mathbb{E}_{\sigma, \tau} [ \rho(X,Y)] \leq e^{-\alpha/n} \rho(\sigma, \tau) 
\ee
whenever $|S_n(\sigma) |<\varepsilon n$.
\end{lemma}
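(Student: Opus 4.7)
The plan is to combine a path decomposition with the observation that the neighbor-pair estimate in Corollary \ref{cor:meanpath} telescopes, so that $\mathbb{E}_{\sigma,\tau}[\rho(X,Y)]$ is controlled by the magnetizations of $\sigma$ and $\tau$ alone, and then to close the argument with a geometric bound that identifies the metastable critical value $K_1(\beta)$ with the maximal secant slope of the map $z \mapsto c_\beta'(2\beta K z)$.

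First, I fix a $\rho$-minimizing path $\sigma = x_0 \sim x_1 \sim \cdots \sim x_r = \tau$ of length $r = \rho(\sigma,\tau)$ and write $m_i := S_n(x_i)/n$. Using the triangle inequality for the transportation metric $d_K$, together with the coupling of Section~\ref{pc} and Corollary~\ref{cor:meanpath} applied to each neighboring pair, I obtain
\[
\mathbb{E}_{\sigma,\tau}[\rho(X,Y)] \;\leq\; \sum_{i=1}^{r} \mathbb{E}_{x_{i-1},x_i}[\rho(X,Y)] \;=\; r\cdot\frac{n-1}{n} \;+\; \frac{n-1}{n}\bigl[c_\beta'(2\beta K m_\tau)-c_\beta'(2\beta K m_\sigma)\bigr] \;+\; O(r/n^2),
\]
because the middle sum telescopes regardless of the intermediate states. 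Thus the only remaining task is to bound the endpoint difference $c_\beta'(2\beta K m_\tau)-c_\beta'(2\beta K m_\sigma)$ by $(1-\alpha)r/n$ up to lower-order terms.

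The key geometric ingredient is the identity
\[
\sup_{z \in [-1,1]\setminus\{0\}} \frac{c_\beta'(2\beta K z)}{z} \;=\; \frac{K}{K_1(\beta)} \;<\; 1 \qquad \text{for } K < K_1(\beta).
\]
To prove it, I substitute $w = 2\beta K z$ to rewrite the supremum as $2\beta K \cdot \sup_{w>0} c_\beta'(w)/w$. By the defining tangency $G_{\beta,K_1}'(z^*) = G_{\beta,K_1}''(z^*) = 0$ of $K_1(\beta)$ (Lemma~3.9 of \cite{EOT}), the line $y=z$ is tangent to the graph of $z \mapsto c_\beta'(2\beta K_1 z)$ at some $z^*>0$, so the corresponding supremum equals $1$ at $K=K_1$, forcing $\sup_{w>0} c_\beta'(w)/w = 1/(2\beta K_1)$. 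The scaling identity $\sup_{z>0} c_\beta'(2\beta K z)/z = K/K_1$ is then immediate, and oddness of $c_\beta'$ extends it from $z>0$ to all $z\neq 0$.

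With this in hand, I define the secant slope $s(u,v) := [c_\beta'(2\beta K v) - c_\beta'(2\beta K u)]/(v-u)$, continuously extended along the diagonal by $s(u,u) := 2\beta K c_\beta''(2\beta K u)$; this is continuous on the compact square $[-1,1]^2$. At $u=0$ the claim gives $\sup_v s(0,v) = K/K_1$, since the diagonal value $s(0,0) = K/K_c^{(2)}(\beta)$ is at most $K/K_1$ because $K_1(\beta) < K_c^{(2)}(\beta)$ for $\beta>\beta_c$. By continuity and compactness, for every $\alpha \in (0,(K_1-K)/K_1)$ I can choose $\varepsilon > 0$ so that $\sup_{v\in[-1,1]} s(u,v) \leq 1-\alpha$ whenever $|u| < \varepsilon$. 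Combined with $|m_\tau - m_\sigma| \leq r/n$ (from the triangle inequality applied to $S_n(\tau)-S_n(\sigma)$) and the monotonicity of $c_\beta'$, this yields $c_\beta'(2\beta K m_\tau) - c_\beta'(2\beta K m_\sigma) \leq (1-\alpha)r/n$; substituting into the displayed bound gives $\mathbb{E}_{\sigma,\tau}[\rho(X,Y)] \leq r[1 - \alpha/n + O(1/n^2)] \leq r\,e^{-\alpha/n}$ for all $n$ sufficiently large. The principal obstacle is the geometric identification $\sup_z c_\beta'(2\beta K z)/z = K/K_1(\beta)$: it is precisely this translation of the metastable tangency of $G_{\beta,K_1}$ into a uniform aggregate secant bound from near-zero magnetization that converts the failure of standard path coupling in the discontinuous regime into the contraction stated in the lemma.
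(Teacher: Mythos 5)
Your argument is essentially the paper's own proof: decompose along a geodesic, telescope the neighbor estimate of Corollary \ref{cor:meanpath} to the endpoint difference $c_\beta'\!\left(2\beta K S_n(\tau)/n\right)-c_\beta'\!\left(2\beta K S_n(\sigma)/n\right)$, and bound that difference by a secant slope at most $1-\alpha$ valid when $|S_n(\sigma)|<\varepsilon n$, using exactly the two inputs the paper uses, namely the metastable tangency bound $c_\beta'(w)\le w/(2\beta K_1(\beta))$ for $w\ge 0$ and $c_\beta''(0)=1/(2\beta K_c^{(2)}(\beta))<1/(2\beta K_1(\beta))$. The only differences are cosmetic --- you get the uniform secant bound near zero magnetization by continuity and compactness of $s(u,v)$, where the paper constructs $\varepsilon$ explicitly from an intermediate slope $\alpha'\in\left(1/(2\beta K_1(\beta)),\,(1-\alpha)/(2\beta K)\right)$ and the mean value theorem --- so just take your secant bound strictly below $1-\alpha$ (as the paper's choice of $\alpha'$ does) to leave room to absorb the $O(1/n^2)$ term into $e^{-\alpha/n}$.
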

\bp
Observe that for $\beta > \beta_c$ and $K < K_1(\beta)$, $$|c_\beta '(x)| \leq {|x| \over  2\beta K_1(\beta)} \hsp \mbox{for all} \ x $$ 
We will show that for a given $\alpha' \in \left({1 \over 2\beta K_1(\beta)},{1-\alpha \over 2\beta K} \right)$, there exists $\varepsilon>0$ such that
\begin{equation}\label{cb}
c_\beta '(x)-c_\beta '(x_0) \leq \alpha' (x-x_0)\quad \text{ whenever}~ |x_0|<\varepsilon
\end{equation}
as $c_\beta '(x)$ is a continuously differentiable increasing odd function and $c_\beta '(0)=0$.

\vskip 0.2 in
\noi 
In order to show (\ref{cb}), observe that $c''_{\beta}(0)={1 \over 2\beta K_c^{(2)}(\beta)} < {1 \over 2 \beta K_1(\beta)}$, and since $c''_{\beta}$ is continuous, there exists a $\delta>0$ such that
$$c_\beta ''(x)<\alpha'  \quad \text{ whenever}~ |x|<\delta$$
The mean value theorem implies that
$$c_\beta '(x)-c_\beta '(x_0) <\alpha' (x-x_0)\qquad \mbox{for all} \  x_0,x \in (-\delta,\delta)$$
Now, let $\varepsilon={\alpha'-1/(2\beta K_1(\beta)) \over \alpha'+1/(2\beta K_1(\beta))} \delta < \delta$.  Then for any $|x_0|<\varepsilon$ and $|x|\geq \delta$,
$$|c_\beta '(x)-c_\beta '(x_0)| \leq {|x|+|x_0| \over 2\beta K_1(\beta)}
\leq {(1+{\varepsilon/\delta})|x| \over 2\beta K_1(\beta)} 
= {|x-x_0| \over 2\beta K_1(\beta)}\cdot {1+{\varepsilon/\delta} \over |1-x_0/x|} 
\leq {|x-x_0| \over 2\beta K_1(\beta)}\cdot {1+{\varepsilon/\delta} \over 1-\varepsilon/\delta}
=\alpha' |x-x_0|$$

\noindent
Without loss of generality suppose that $S_n(\sigma)< S_n(\tau)$.
Let $(\sigma = x_0, x_1, \ldots, x_r = \tau)$ be a path connecting $\sigma$ to $\tau$ and monotone increasing  in $\rho$ such that $(x_{i-1}, x_i)$ are neighboring configurations.  Here $r=\rho(\sigma,\tau)$. Then by Corollary \ref{cor:meanpath} of Lemma \ref{lemma:meanpath} and (\ref{cb}), we have for $|S_n(\sigma) |<\varepsilon n$ and asymptotically as $n \rightarrow \infty$, 
\beas
 \mathbb{E}_{\sigma, \tau} [ \rho(X,Y)]  & \leq & \sum_{i=1}^r \mathbb{E}_{x_{i-1}, x_i} [ \rho(X_{i-1},X_i)] \\
& = &  \frac{(n-1)}{n} \rho(\sigma, \tau) +  \frac{(n-1)}{n} \left[c_\beta'\left( \frac{2\beta K}{n} S_n(\tau) \right) - c_\beta' \left(\frac{2\beta K}{n} S_n(\sigma) \right) \right]  +\rho(\sigma, \tau) \cdot O\left({1 \over n^2}\right)\\
& \leq & \frac{(n-1)}{n} \rho(\sigma, \tau) +  \frac{(n-1)}{n} (S_n(\tau)-S_n(\sigma)) \frac{2\beta K \alpha'}{n}  +\rho(\sigma, \tau) \cdot O\left({1 \over n^2}\right)\\
& \leq & \rho (\sigma, \tau) \left[ 1 - \left(\frac{1-2\beta K \alpha'}{n} \right)  + O\left({1 \over n^2}\right) \right] \\
& \leq & e^{-\alpha/n} \rho(\sigma, \tau)
\eeas
This completes the proof.
\ep

\begin{thm}
\label{thm:mfBC}
Let $t_{\text{{\em \mix}}}(\ve)$ be the mixing time for the Glauber dynamics of the mean-field Blume-Capel model on $n$ vertices and $K_1(\beta)$ be the metastable critical point.  Then, for $\beta > \beta_c$ and $K < K_1(\beta)$,
\[ t_{\text{{\em \mix}}}(\ve) \leq {n \over \alpha} (\log n + \log (2/\ve)) \]
for any $\alpha \in \left(0,{K_1(\beta)-K \over K_1(\beta)}\right)$ and $n$ sufficiently large.
\end{thm}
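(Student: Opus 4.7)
The plan is to couple the Glauber dynamics $X_t$ started in an arbitrary configuration with a chain $Y_t$ started from the stationary distribution $\pi = P_{n,\beta,K}$, and to exploit the fact that $\pi$ is concentrated near magnetization zero (so that the hypothesis of Lemma \ref{lemma:aggregate} is satisfied at essentially every step). Since $K < K_1(\beta) < K_c^{(1)}(\beta)$, Theorem \ref{thm:firstorder}(a) gives $\tilde{\mathcal{E}}_{\beta,K} = \{0\}$, so Theorem \ref{thm:weakconv} applies; moreover, the LDP in Theorem \ref{thm:ldppnbetak} with rate function $I_{\beta,K}$ vanishing only at $0$ gives the quantitative estimate $\pi\{|S_n|/n \geq \varepsilon\} \leq e^{-c n}$ for some $c = c(\beta,K,\varepsilon) > 0$ and all $n$ sufficiently large.

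First, fix $\alpha \in \left(0, (K_1(\beta)-K)/K_1(\beta)\right)$ and let $\varepsilon > 0$ be the constant furnished by Lemma \ref{lemma:aggregate} for this choice of $\alpha$. Construct a Markovian coupling $(X_t, Y_t)$ as follows: draw $Y_0 \sim \pi$ (so $Y_t \sim \pi$ for all $t$) and at each step couple the transitions along a $\rho$-geodesic path from $X_t$ to $Y_t$ by concatenating the single-step couplings from Section \ref{pc}. Let $A_t = \{|S_n(Y_t)| < \varepsilon n\}$. On $A_t$, Lemma \ref{lemma:aggregate} (applied with the role of ``$\sigma$'' played by whichever of $X_t, Y_t$ has smaller magnetization, which on $A_t$ has $|S_n|<\varepsilon n$) yields
\[
\mathbb{E}\bigl[\rho(X_{t+1}, Y_{t+1}) \,\big|\, X_t, Y_t\bigr] \leq e^{-\alpha/n}\, \rho(X_t, Y_t).
\]
On $A_t^c$, use the trivial bound $\rho(X_{t+1}, Y_{t+1}) \leq \mathrm{diam}(\Omega^n) \leq 2n$, together with $P(A_t^c) = \pi\{|S_n| \geq \varepsilon n\} \leq e^{-cn}$. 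Taking total expectation gives the recursion
\[
\mathbb{E}[\rho(X_{t+1}, Y_{t+1})] \leq e^{-\alpha/n}\, \mathbb{E}[\rho(X_t, Y_t)] + 2n\, e^{-cn}.
\]

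Iterating from $\mathbb{E}[\rho(X_0,Y_0)] \leq 2n$ and summing the geometric series yields
\[
\mathbb{E}[\rho(X_t,Y_t)] \leq 2n\, e^{-\alpha t/n} + \frac{2n\, e^{-cn}}{1-e^{-\alpha/n}}.
\]
Plugging in $t = (n/\alpha)(\log n + \log(2/\ve))$ makes the first term equal to $\ve$, while the second term is $O(n^2 e^{-cn})$ and is absorbed by enlarging $\alpha$ slightly within the admissible interval (equivalently, using any strict sub-$\alpha$ rate is enough for $n$ large). By the coupling inequality and Markov's inequality,
\[
\|P^t(x,\cdot) - \pi\|_{\TV} \leq P(X_t \neq Y_t) = P(\rho(X_t, Y_t) \geq 1) \leq \mathbb{E}[\rho(X_t,Y_t)] \leq \ve
\]
for all initial states $x$ and all $n$ sufficiently large, which gives the stated mixing-time bound.

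The only conceptually nontrivial step is the first one: without Lemma \ref{lemma:aggregate} the neighbor-to-neighbor contraction fails on some part of the state space, so the standard path coupling bound cannot be invoked directly. The key idea is that coupling against a stationary chain confines one endpoint to the region where the aggregated contraction does hold, and the exponential smallness of $\pi\{|S_n| \geq \varepsilon n\}$ (a consequence of the equilibrium analysis in Section \ref{eps}) makes the excursions off this good region negligible compared with the mixing rate $e^{-\alpha/n}$.
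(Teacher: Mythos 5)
Your proposal is correct and follows essentially the same route as the paper's proof: couple against a stationary chain, apply the aggregate contraction of Lemma \ref{lemma:aggregate} on the event that the stationary chain has magnetization below $\varepsilon n$, bound the complementary event by the diameter times the LDP-exponentially-small probability from Theorem \ref{thm:ldppnbetak}, and iterate before plugging in $t = \frac{n}{\alpha}(\log n + \log(2/\ve))$. The only differences are cosmetic (summing the geometric series rather than the paper's $nt\,e^{-cn}$ bound, and the $2n$ versus $n$ diameter constant), neither of which affects the argument.
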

\begin{proof}  Let $(X_t, Y_t)$ be a coupling of the Glauber dynamics of the BC model such that $Y_0 \overset{dist}{=} P_{n, \beta, K}$, the stationary distribution.  
For a given $\alpha \in \left(0,{K_1(\beta)-K \over K_1(\beta)}\right)$, let  $\varepsilon$ be as in Lemma \ref{lemma:aggregate}.  For sufficiently large $n$,
\beas
\| P^t(X_0, \cdot) - P_{n, \beta, K} \|_{TV} & \leq & P \{ X_t \neq Y_t \}  \\
& = & P \{ \rho(X_t, Y_t ) \geq 1 \} \\
& \leq & \mathbb{E} [ \rho(X_t,Y_t)] \\
& = & \mathbb{E} [ {\scriptstyle \mathbb{E}[\rho(X_t,Y_t)~|X_{t-1},Y_{t-1}]} ]\\
& \leq & \mathbb{E} [ {\scriptstyle \mathbb{E}[\rho(X_t,Y_t)~|X_{t-1},Y_{t-1}]}~|~|S_n(Y_{t-1})|<\varepsilon n] \cdot P\{|S_n(Y_{t-1})|<\varepsilon n\} \\
& & + nP\{|S_n(Y_{t-1})| \geq \varepsilon n\} 
\eeas

\noi
By iterating (\ref{eqn:contract}), it follows that 
\beas
\| P^t(X_0, \cdot) - P_{n, \beta, K} \|_{TV} 
& \leq & e^{-\alpha /n} \mathbb{E} [ \rho(X_{t-1},Y_{t-1})~|~|S_n(Y_{t-1})|<\varepsilon n] \cdot P\{|S_n(Y_{t-1})|<\varepsilon n\} \\
& & + nP\{|S_n(Y_{t-1})| \geq \varepsilon n\} \\
& \leq & e^{-\alpha /n} \mathbb{E} [ \rho(X_{t-1},Y_{t-1})]+nP\{|S_n(Y_{t-1})| \geq \varepsilon n\} \\
& \vdots & \vdots\\
& \leq & e^{-\alpha t/n} \mathbb{E} [ \rho(X_0,Y_0)] +n \sum_{s=0}^{t-1} P\{|S_n(Y_s)| \geq \varepsilon n\} \\
& = & e^{-\alpha t/n} \mathbb{E} [ \rho(X_0,Y_0)]+ntP_{n, \beta, K}\{|S_n/n| \geq \varepsilon\} \\
& \leq & n e^{-\alpha t/n}+ntP_{n, \beta, K}\{|S_n/n| \geq \varepsilon\} 
\eeas

\noi
We recall the result in Theorem \ref{thm:weakconv} that for $\beta > \beta_c$ and $K < K_1(\beta)$ 
\[ P_{n, \beta, K}\{ S_n/n \in dx \} \Longrightarrow \delta_0 \hsp \mbox{as $n \goto \infty$.} \]
Moreover, for any $\gamma>1$ and $n$ sufficiently large, the LDP stated in Theorem \ref{thm:ldppnbetak} implies that
\beas
\|P^t(X_0, \cdot) - P_{n, \beta, K} \|_{TV} & \leq & n e^{-\alpha t/n}+ntP_{n, \beta, K}\{|S_n/n| \geq \varepsilon\}  \\
&  <  & n e^{-\alpha t/n} +tne^{-{n \over \gamma}I_{\beta,K}(\varepsilon)} 
\eeas
For $t = {n \over \alpha} (\log n + \log (2/\ve))$, the above right-hand side converges to $\ve/2$ as $n \goto \infty$. 
\ep

\medskip

\subsection{Slow Mixing}  \label{slow}

In \cite{KOT}, the slow mixing region of the parameter space was determined for the mean-field Blume-Capel model.  Since the method used to prove the slow mixing, called the bottleneck ratio or Cheeger constant method, is not a coupling method, we simply state the result for completeness.

\begin{thm}
\label{thm:slowmix}
Let $t_{\text{{\em \mix}}} = t_{\text{{\em \mix}}}(1/4)$ be the mixing time for the Glauber dynamics of the mean-field Blume-Capel model on $n$ vertices.  For {\em (a)} $\beta \leq \beta_c$ and $K>K_c^{(2)}(\beta)$, and {\em (b)} $\beta > \beta_c$ and $K > K_1(\beta)$, there exists a positive constant $b$ and a strictly positive function $r(\beta,K)$ such that
\[ t_{\text{{\em \mix}}} \geq be^{r(\beta,K)n} \]
\end{thm}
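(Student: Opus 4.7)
The plan is to invoke the standard bottleneck (Cheeger) lower bound on mixing times: if
\[ \Phi_\star = \min_{S:\, \pi(S)\leq 1/2} \frac{Q(S, S^c)}{\pi(S)} \]
with $Q(\sigma,\tau) = \pi(\sigma) P(\sigma,\tau)$ and $\pi = P_{n,\beta,K}$, then $t_{\mix}(1/4) \geq 1/(4\Phi_\star)$. Because a single Glauber update changes $S_n$ by at most $2$, any set of the form $S = \{\sigma : S_n(\sigma)/n \geq a\}$ has the convenient feature that $Q(S, S^c)$ is supported on an $O(1/n)$-thin shell near $\{S_n/n \approx a\}$. Combined with the LDP of Theorem \ref{thm:ldppnbetak}, the problem reduces to choosing a threshold $a = z^\ast$ so that the large-deviation gap $I_{\beta,K}(z^\ast) - \inf_S I_{\beta,K}$ is strictly positive.

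The threshold is dictated by the landscape of $G_{\beta,K}$ established in subsection \ref{eps}, transported to $I_{\beta,K}$ via Lemma \ref{lemma:equalmin}. In case (a), Theorem \ref{thm:secondorder} gives $\tilde{\mathcal{E}}_{\beta,K} = \{\pm z(\beta,K)\}$; by the $\omega \mapsto -\omega$ symmetry of $H_{n,K}$ I choose $S = \{\sigma : S_n(\sigma) > 0\}$, so $\pi(S) \leq 1/2$ and the barrier at $z^\ast = 0$ has height $I_{\beta,K}(0) > 0$. For case (b) with $K > K_c^{(1)}(\beta)$ the same symmetric cut works, since $0$ is then only a local minimum of $G_{\beta,K}$ and Lemma \ref{lemma:equalmin} transfers this to $I_{\beta,K}(0)>0$. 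The delicate sub-region is $K_1(\beta) < K < K_c^{(1)}(\beta)$, where $0$ remains the unique global minimizer of $I_{\beta,K}$; here Lemma \ref{lemma:equalmin} produces a positive local minimizer $z_0$ of $I_{\beta,K}$, and since $I_{\beta,K}$ is continuous and one-dimensional there must be a strict local maximum $z^\ast \in (0, z_0)$ with $I_{\beta,K}(z^\ast) > I_{\beta,K}(z_0)$. I then take $S = \{\sigma : S_n(\sigma)/n \geq z^\ast\}$, which has $\pi(S) \leq 1/2$ for large $n$ because the bulk of the Gibbs mass concentrates near $0$.

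With $S$ in hand, the LDP upper bound gives $Q(S, S^c) \leq e^{-n(I_{\beta,K}(z^\ast) - o(1))}$, while the LDP lower bound applied to a small open neighborhood of the relevant minimizer inside $S$ yields $\pi(S) \geq e^{-n(I_{\min}(S) + o(1))}$, with $I_{\min}(S) = 0$ in case (a) and in case (b) for $K \geq K_c^{(1)}(\beta)$, and $I_{\min}(S) = I_{\beta,K}(z_0) < I_{\beta,K}(z^\ast)$ in the metastable sub-region. Combining,
\[ \Phi(S) \leq \exp\bigl\{-n\, r(\beta,K) + o(n)\bigr\}, \qquad r(\beta,K) := I_{\beta,K}(z^\ast) - I_{\min}(S) > 0, \]
and Cheeger's inequality produces $t_{\mix} \geq b\, e^{r(\beta,K) n}$ for some $b>0$ and all $n$ large.

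The main obstacle is the metastable sub-region $K_1(\beta) < K < K_c^{(1)}(\beta)$: one needs quantitative knowledge that the rate function separates its global minimum at $0$ from the metastable minimum $z_0$ by a \emph{strict} barrier. This is exactly where Lemma \ref{lemma:equalmin} is indispensable — it transfers the full barrier structure of $G_{\beta,K}$, completely pinned down in \cite{EOT}, to that of $I_{\beta,K}$, which is the object controlling Gibbs probabilities through the LDP. A secondary (but routine) point is justifying that the $O(1/n)$-thin boundary shell of $\{S_n/n \geq z^\ast\}$ really captures the entire flow $Q(S, S^c)$; this follows from the fact that each Glauber update shifts $S_n$ by at most $2$, so configurations outside the shell contribute zero flow across the cut.
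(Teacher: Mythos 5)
Your proposal is correct and follows precisely the route the paper itself indicates: the paper does not supply a proof of Theorem \ref{thm:slowmix} but states it for completeness, naming the bottleneck-ratio (Cheeger constant) method of \cite{KOT} and introducing Lemma \ref{lemma:equalmin} exactly so that the metastable barrier structure of $G_{\beta,K}$ transfers to the rate function $I_{\beta,K}$, which is how you use it. The only minor patches needed are the boundary case $K=K_c^{(1)}(\beta)$, which your metastable cut already handles since $I_{\beta,K}(z^\ast)>0=I_{\beta,K}(z_0)$ there, and the strictness of the barrier $I_{\beta,K}(z^\ast)>I_{\beta,K}(z_0)$, which requires the smoothness of $I_{\beta,K}$ (equivalently the strict-convexity-at-minima hypothesis of Lemma \ref{lemma:equalmin}) rather than continuity alone.
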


\noi
We summarize the mixing time results for the mean-field Blume-Capel model and its relationship to the model's thermodynamic phase transition structure in Figure \ref{mixingphase}.  As shown in the figure, in the second-order, continuous phase transition region ($\beta \leq \beta_c$) for the BC model, the mixing time transition coincides with the equilibrium phase transition.  This is consistent with other models that exhibit this type of phase transition.  However, in the first-order, discontinuous phase transition region ($\beta > \beta_c$) the mixing time transition occurs below the equilibrium phase transition at the metastable critical value.

\begin{figure}[t]
\begin{center}
\includegraphics[height=3in]{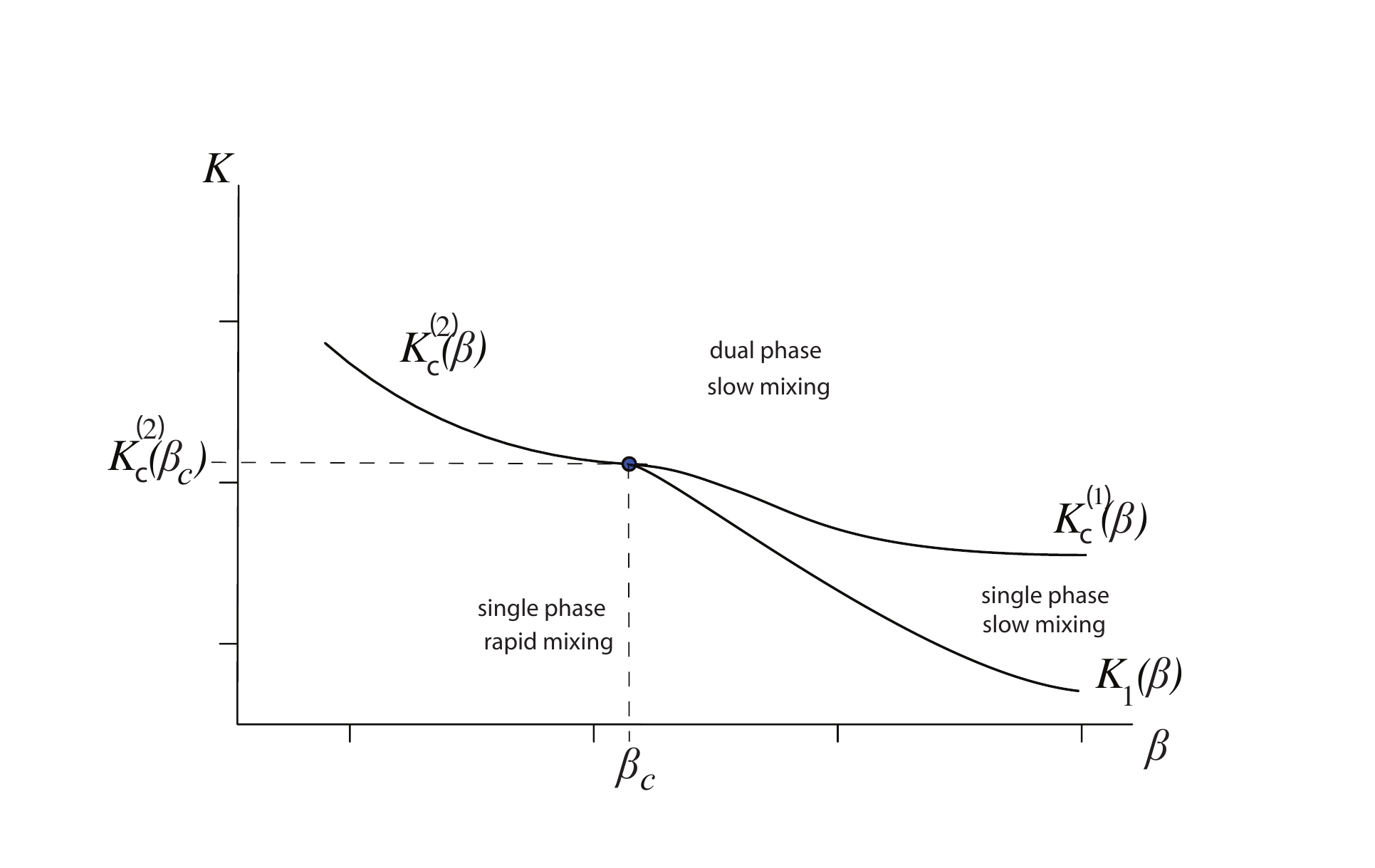}
\caption{\footnotesize Mixing times and equilibrium phase transition structure of the mean-field Blume-Capel model} \label{mixingphase}
\end{center}
\end{figure} 

\bigskip

\section{Aggregate Path Coupling for General Class of Gibbs Ensembles} \label{genspinmodels}

\medskip

In this section, we extend the aggregate path coupling technique derived in the previous section for the Blume-Capel model to a large class of statistical mechanical models that is disjoint from the mean-field Blume-Capel model.  The aggregate path coupling method presented here extends the classical path coupling method for Gibbs ensembles in two directions.  First, we consider macroscopic quantities in higher dimensions and find a monotone contraction path by considering a related variational problem in the continuous space. We also do not require the monotone path to be a nearest-neighbor path. In fact, in most situations we consider, a nearest-neighbor path will not work for proving contraction.  Second, the aggregation of the mean path distance along a monotone path  is shown to contract for some but not all pairs of configurations.  Yet, we use measure concentration and large deviation principle to show that showing contraction for pairs of configurations, where at least one of them is close enough to the equilibrium, is sufficient for establishing rapid mixing.

\medskip

Our main result is general enough to be applied to statistical mechanical models that undergo both types of phase transitions and to models whose macroscopic quantity are in higher dimensions.  Moreover, despite the generality, the application of our results requires straightforward conditions that we illustrate in Section \ref{GCWP}.  This is a significant simplification for proving rapid mixing for statistical mechanical models, especially those that undergo first-order, discontinuous phase transitions.  Lastly, our results also provide a link between measure concentration of the stationary distribution and rapid mixing of the corresponding dynamics for this class of statistical mechanical models.  This idea has been previously studied in \cite{MJLuczak} where the main result showed that rapid mixing implied measure concentration defined in terms of Lipschitz functions.  In our work, we prove a type of converse where measure concentration, in terms of a large deviation principle, implies rapid mixing.

\bigskip

\subsection{Class of Gibbs Ensembles} \label{sec:gibbs} 

\medskip

We begin by defining the general class of statistical mechanical spin models for which our results can be applied.  In Section \ref{GCWP}, we illustrate the application of our main result for the generalized Curie-Weiss-Potts model for which the mixing times has not been previously obtained.  

\medskip

Let $q$ be a fixed integer and define $\Lambda = \{ e^1, e^2, \ldots, e^q \}$, where $e^k$ are the $q$ standard basis vectors of $\R^q$.  A {\it configuration} of the model has the form $\omega = (\omega_1, \omega_2, \ldots, \omega_n) \in \Lambda^n$.  We will consider a configuration on a graph with $n$ vertices and let $X_i(\omega) = \omega_i$ be the {\it spin} at vertex $i$.  The random variables $X_i$'s for $i=1, 2, \ldots, n$ are independent and identically distributed with common distribution $\rho$.  

\medskip

In terms of the microscopic quantities, the spins at each vertex, the relevant macroscopic quantity is the {\it magnetization vector} (a.k.a empirical measure or proportion vector) 
\be
\label{eqn:empmeasure} 
L_n(\omega) = (L_{n,1}(\omega), L_{n,2}(\omega), \ldots, L_{n,q}(\omega)) 
\ee
where the $k$th component is defined by 
\[ L_{n, k}(\omega) = \frac{1}{n} \sum_{i=1}^n \delta(\omega_i, e^k) \]
which yields the proportion of spins in configuration $\omega$ that take on the value $e^k$.  The magnetization vector $L_n$ takes values in the set of probability vectors 
\be
\label{eqn:latticesimplex} 
\mathcal{P}_n = \left\{ \frac{n_k}{n} : \mbox{each} \ n_k \in \{0, 1, \ldots, n\} \ \mbox{and} \ \sum_{k=1}^q n_k = n \right\} 
\ee
inside the continuous simplex
$$ \mathcal{P} = \left\{ \nu \in \R^q : \nu = (\nu_1, \nu_2, \ldots, \nu_q), \mbox{each} \ \nu_k \geq 0, \ \sum_{k=1}^q \nu_k = 1 \right\}.$$

\medskip

\begin{remark}
For $q=2$, the empirical measure $L_n$ yields the empirical mean $S_n(\omega)/n$ where $S_n(\omega) = \sum_{i=1}^n \omega_i$.  Therefore, the class of models considered in this paper includes those where the relevant macroscopic quantity is the empirical mean, like the Curie-Weiss (mean-field Ising) model.
\end{remark}

As discussed in section \ref{gibbs}, statistical mechanical models are defined in terms of the Hamiltonian function, denoted by $H_n(\omega)$, which encodes the interactions of the individual spins and the total energy of a configuration. The link between the microscopic interactions to the macroscopic quantity, in this case $L_n(\omega)$, is the interaction representation function, which we define again for convenience below.

\begin{defn}
\label{defn:irf}
For $z \in \R^q$, we define the {\bf interaction representation function}, denoted by $H(z)$, to be a differentiable function satisfying 
\[ H_n (\omega) = n H(L_n(\omega)) \] 
\end{defn}

\medskip

\noi
Throughout the paper we suppose the interaction representation function $H(z)$ is a finite concave $\mathcal{C}^3(\R^q)$ function that has the form
$$H(z)=H_1(z_1)+H_2(z_2)+\ldots +H_q(z_q)$$
For example, for the Curie-Weiss-Potts (CWP) model \cite{CDLLPS},
$$H(z)=-{1 \over 2} \big<z,z \big>=-{1 \over 2} z_1^2-{1 \over 2} z_2^2-\ldots -{1 \over 2} z_q^2.$$

\medskip

The class Gibbs measures or Gibbs ensemble considered in this section is defined by
\be 
\label{eqn:gibbs}
P_{n, \beta} (B) = \frac{1}{Z_n(\beta)} \int_B \exp \left\{ -\beta H_n(\omega) \right\} dP_n = \frac{1}{Z_n(\beta)} \int_B \exp \left\{  -\beta n \, H\left(L_n(\omega) \right) \right\} dP_n 
\ee
where $P_n$ is the product measure with identical marginals $\rho$ and $Z_n(\beta) = \int_{\Lambda^n} \exp \left\{ -\beta H_n(\omega) \right\} dP_n$ is the {\bf partition function}.  The positive parameter $\beta$ represents the inverse temperature of the external heat bath.

\medskip

\begin{remark}
To simplify the presentation, we take $\Lambda = \{ e^1, e^2, \ldots, e^q \}$, where $e^k$ are the $q$ standard basis vectors of $\R^q$.  But our analysis has a straight-forward generalization to the case where $\Lambda = \{ \theta^1, \theta^2, \ldots, \theta^q \}$, where $\theta^k$ is any basis of $\R^q$.  In this case, the product measure $P_n$ would have identical one-dimensional marginals equal to 
\[ \bar{\rho} = \frac{1}{q} \sum_{i=1}^q \delta_{\theta^i} \]
\end{remark}

\medskip

An important tool we use to prove rapid mixing of the Glauber dynamics that converge to the Gibbs ensemble above is the large deviation principle of the empirical measure with respect to the Gibbs ensemble.  This measure concentration is precisely what drives the rapid mixing.  The large deviation principle for our class of Gibbs ensembles $P_{n, \beta}$ is presented next.

\bigskip

\subsection{Large Deviations} \label{ldp} 

\medskip

By Sanov's Theorem, the empirical measure $L_n$ satisfies the large deviation principle (LDP) with respect to the product measure $P_n$ with identical marginals $\rho$ and the rate function is given by the {\bf relative entropy}
\[ R(\nu | \rho) = \sum_{k=1}^q \nu_k \log \left( \frac{\nu_k}{\rho_k} \right) \]
for $\nu \in \mathcal{P}$.  Theorem 2.4 of \cite{EHT} yields the following result for the Gibbs measures (\ref{eqn:gibbs}).
\begin{thm}
\label{thm:sanov}
The empirical measure $L_n$ satisfies the LDP with respect to the Gibbs measure $P_{n, \beta}$ with rate function
\[ I_\beta(z) = R(z | \rho) + \beta H(z) - \inf_t \{ R(t | \rho) + \beta H(t) \}. \]
\end{thm}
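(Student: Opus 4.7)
The plan is to recognize $P_{n,\beta}$ as an exponential tilt of the product measure $P_n$ whose density depends on $\omega$ only through $L_n(\omega)$, and then combine Sanov's theorem (the LDP for $L_n$ under $P_n$) with Varadhan's integral lemma (or a direct Laplace-type covering argument, since $H$ is continuous on a compact simplex).

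First, I would write the Radon--Nikodym derivative
\[
\frac{dP_{n,\beta}}{dP_n}(\omega) \;=\; \frac{1}{Z_n(\beta)} \exp\bigl\{-\beta n\, H(L_n(\omega))\bigr\},
\]
so that for any Borel $A \subset \mathcal{P}$,
\[
P_{n,\beta}\{L_n \in A\} \;=\; \frac{\mathbb{E}_{P_n}\!\left[\exp(-\beta n H(L_n));\, L_n \in A\right]}{\mathbb{E}_{P_n}\!\left[\exp(-\beta n H(L_n))\right]}.
\]
By Sanov's theorem, $L_n$ satisfies an LDP under $P_n$ on the compact simplex $\mathcal{P}$ with rate function $R(\,\cdot\,\mid \rho)$. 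Since $H$ is continuous (indeed $\mathcal{C}^3$) and $\mathcal{P}$ is compact (so $H$ is automatically bounded and the Varadhan tail condition is trivial), Varadhan's integral lemma yields the partition-function asymptotics
\[
\lim_{n \to \infty} \frac{1}{n} \log Z_n(\beta) \;=\; -\inf_{t \in \mathcal{P}} \bigl\{R(t \mid \rho) + \beta H(t)\bigr\} \;=:\; -\Phi(\beta).
\]

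Next, I would transfer the LDP to the tilted measure. For the upper bound on a closed $F \subset \mathcal{P}$, cover $F$ by finitely many small open sets $U_1, \dots, U_N$ on each of which $H$ oscillates by at most $\varepsilon$; using $-H \le -H(z_j) + \varepsilon$ on $U_j$ together with the Sanov upper bound gives
\[
\limsup_{n \to \infty} \frac{1}{n} \log \mathbb{E}_{P_n}\!\bigl[e^{-\beta n H(L_n)}; L_n \in F\bigr] \;\le\; -\inf_{z \in F}\{R(z\mid\rho) + \beta H(z)\} + \beta\varepsilon.
\]
The matching lower bound on an open $G$ comes from restricting to a small ball around a near-minimizer in $G$ and applying the Sanov lower bound there. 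Dividing the numerator estimate by $Z_n(\beta)$, using the asymptotics above, and letting $\varepsilon \downarrow 0$ yields both halves of the LDP with rate function $I_\beta(z) = R(z\mid\rho) + \beta H(z) - \Phi(\beta)$.

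The main technical point I expect is not the algebra but the standard Laplace covering; since $\mathcal{P}$ is compact and $H$ is continuous, it is routine. The only mild nuisance is behavior at $\partial\mathcal{P}$, where $R(\,\cdot\,\mid \rho)$ may equal $+\infty$, but such boundary points contribute to neither the infimum nor the exponential estimates and can be excised. Alternatively, since the result is essentially Theorem 2.4 of \cite{EHT} specialized to the finite-alphabet product reference measure, one can simply invoke that theorem once the decomposition $H_n(\omega) = n H(L_n(\omega))$ and continuity of $H$ have been verified.
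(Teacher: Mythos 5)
Your proposal is correct and in the end takes the same route as the paper: the paper gives no independent argument for this theorem, simply recording Sanov's theorem for $L_n$ under $P_n$ and then invoking Theorem 2.4 of \cite{EHT} for the tilted measure, which is exactly your closing alternative. The tilting/Varadhan--Laplace sketch you give first is the standard argument underlying that citation and is sound here, since $\mathcal{P}$ is compact and $H$ is continuous, so the covering and boundary issues you flag are indeed routine.
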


\medskip

As discussed in section \ref{LDP}, the LDP upper bound stated in the previous theorem yields the following natural definition of {\bf equilibrium macrostates} of the model.
\be
\label{eqn:eqmac} 
\mathcal{E}_\beta := \left\{ \nu \in \mathcal{P} : \nu \ \mbox{minimizes} \ R(\nu | \rho) + \beta H(\nu) \right\} 
\ee
For our main result, we assume that there exists a positive interval $B$ such that for all $\beta \in B$, $\mathcal{E}_\beta$ consists of a single state $z_\beta$.  We refer to this interval $B$ as the single phase region.

\medskip

Again from the LDP upper bound, when $\beta$ lies in the single phase region, we get
\be
\label{eqn:ldplimit}
P_{n, \beta} (L_n \in dx) \Longrightarrow \delta_{z_\beta} \hsp \mbox{as} \hsp n \goto \infty. 
\ee
The above asymptotic behavior will play a key role in obtaining a rapid mixing time rate for the Glauber dynamics corresponding to the Gibbs measures (\ref{eqn:gibbs}).

\medskip

An important function in our work is the free energy functional defined below.  It is defined in terms of the interaction representation function $H$ and the logarithmic moment generating function of a single spin; specifically, for $z \in \R^q$ and $\rho$ equal to the uniform distribution, the {\bf logarithmic moment generating function} of $X_1$, the spin at vertex $1$, is defined by 
\be 
\label{eqn:lmgf}
\Gamma(z) = \log \left(  \frac{1}{q} \sum_{k=1}^q \exp\{z_k\}\right). 
\ee

\begin{defn}
\label{defn:fef}
The {\bf free energy functional} for the Gibbs ensemble $P_{n, \beta}$ is defined as
\be 
\label{eqn:fef}
G_{\beta} (z) = \beta (-H)^\ast (-\nabla H(z)) - \Gamma( -\beta \nabla H(z))
\ee
where for a finite, differentiable, convex function $F$ on $\R^q$, $F^\ast$ denotes its Legendre-Fenchel transform defined by 
\[ F^\ast(z) = \sup_{x \in \R^q} \{ \langle x, z \rangle - F(x) \} \]
\end{defn}

The following lemma yields an alternative formulation of the set of equilibrium macrostates of the Gibbs ensemble in terms of the free energy functional.  The proof is a straightforward generalization of Theorem A.1 in \cite{CET}.  

\begin{lemma}
\label{lemma:fefeqstates}
Suppose $H$ is finite, differentiable, and concave.  Then 
\[ \inf_{z \in \mathcal{P}} \{ R(z | \rho) + \beta H(z) \} = \inf_{z \in \R^q}  \{ G_\beta(z) \} \]
Moreover, $z_0 \in \mathcal{P}$ is a minimizer of $R(z | \rho) + \beta H(z)$ if and only if $z_0$ is a minimizer of $G_\beta(z)$.
\end{lemma}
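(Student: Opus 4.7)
The plan is to exploit Legendre-Fenchel duality in two places: the standard pairing between the log-moment generating function $\Gamma$ and the relative entropy $R(\cdot|\rho)$, which for the uniform $\rho$ are conjugates of each other so that $\Gamma(t) = \sup_{y \in \mathcal{P}}\{\langle y, t\rangle - R(y|\rho)\}$ with unique maximizer $y = \nabla\Gamma(t) \in \mathcal{P}$; and the pairing between the concave $H$ and $(-H)^*$, which under differentiability gives the identity $(-H)^*(-\nabla H(z)) = H(z) - \langle z, \nabla H(z)\rangle$, the supremum in the definition of $(-H)^*$ at argument $-\nabla H(z)$ being attained at $z$ itself. The bridge between the domains $\R^q$ and $\mathcal{P}$ will be the auxiliary map $y(z) := \nabla\Gamma(-\beta\nabla H(z))$, which automatically lies in $\mathcal{P}$ since $\nabla\Gamma$ always produces a probability vector.

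First I would prove that $G_\beta(z) \geq F(y(z))$ for every $z \in \R^q$, where $F := R(\cdot|\rho) + \beta H$. Fenchel-Young applied to the convex $-H$ yields $(-H)^*(-\nabla H(z)) \geq -\langle y(z), \nabla H(z)\rangle + H(y(z))$, while the choice of $y(z)$ as the Legendre-dual maximizer for $\Gamma$ at $-\beta\nabla H(z)$ gives $-\Gamma(-\beta\nabla H(z)) = R(y(z)|\rho) + \beta\langle y(z), \nabla H(z)\rangle$. Multiplying the first by $\beta$, adding the second, and matching terms with the definition of $G_\beta$ produces $G_\beta(z) \geq R(y(z)|\rho) + \beta H(y(z)) = F(y(z))$, hence $\inf_{\R^q} G_\beta \geq \inf_{\mathcal{P}} F$. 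Conversely, for $z \in \mathcal{P}$ I would show $F(z) \geq G_\beta(z)$ by applying $R(z|\rho) \geq \langle z, t\rangle - \Gamma(t)$ at $t = -\beta\nabla H(z)$ and invoking the identity $(-H)^*(-\nabla H(z)) = H(z) - \langle z, \nabla H(z)\rangle$; a one-line algebraic rearrangement then gives the result, so $\inf_{\mathcal{P}} F \geq \inf_{\R^q} G_\beta$, and the two infima coincide.

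For the correspondence of minimizers, if $z_0 \in \mathcal{P}$ minimizes $F$, its Lagrange stationarity condition with multiplier for $\sum_k z_k = 1$ reduces (using $\partial_k R(z|\rho) = \log(z_k q) + 1$) to the fixed-point equation $z_0 = \nabla\Gamma(-\beta\nabla H(z_0)) = y(z_0)$, whence $G_\beta(z_0) = F(y(z_0)) = F(z_0) = \inf F = \inf G_\beta$ and $z_0$ minimizes $G_\beta$. Conversely, if $z_0$ minimizes $G_\beta$, I would track equality throughout the Fenchel-Young chain of the first paragraph to conclude $y(z_0) = z_0$, which places $z_0$ in $\mathcal{P}$ and yields $F(z_0) = G_\beta(z_0) = \inf G_\beta = \inf F$.

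The main obstacle will be this last equality-case analysis: when $H$ is merely concave (not strictly concave), the maximizer in Fenchel-Young for $-H$ need not be unique, so one cannot directly conclude $y(z_0) = z_0$ from $G_\beta(z_0) = F(y(z_0))$. My fallback would be to instead compute $\nabla G_\beta(z_0) = 0$ via the chain rule, obtaining the condition $\mathrm{Hess}(H(z_0))\,(\nabla\Gamma(-\beta\nabla H(z_0)) - z_0) = 0$; a mild nondegeneracy of the Hessian of $H$ then still forces $z_0 = y(z_0) \in \mathcal{P}$ and completes the argument.
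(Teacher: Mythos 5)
The paper never writes out a proof of this lemma: it is stated with a pointer to Theorem A.1 of \cite{CET}, whose argument (for the Curie--Weiss--Potts case $H(z)=-\tfrac{1}{2}\langle z,z\rangle$) is a completing-the-square duality computation. Your Fenchel--Young argument is exactly the natural generalization of that computation to concave $H$, and its core is correct: the inequality $G_\beta(z)\ge F(y(z))$ for all $z\in\R^q$, with $y(z)=\nabla\Gamma(-\beta\nabla H(z))\in\mathcal{P}$ and $F=R(\cdot\,|\rho)+\beta H$, and the inequality $F(z)\ge G_\beta(z)$ for $z\in\mathcal{P}$ both hold as you derive them, and together they give the equality of the infima. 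One simplification you should notice: the direction ``$z_0$ minimizes $F$ $\Rightarrow$ $z_0$ minimizes $G_\beta$'' needs no Lagrange stationarity at all (your Lagrange step also silently assumes the minimizer is interior to the simplex, which requires the standard entropy-gradient argument you omit); it is immediate from your second inequality, since $G_\beta(z_0)\le F(z_0)=\inf_{\mathcal{P}}F=\inf_{\R^q}G_\beta$.

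The obstacle you flag in the converse direction is genuine, and it cannot be argued away under the lemma's literal hypotheses: if $H$ is affine, then $\nabla H$ is constant, so $G_\beta$ is constant on $\R^q$ and every point of $\mathcal{P}$ minimizes it, while $F$ has a unique minimizer by strict convexity of the relative entropy; the stated equivalence then fails even though the infima still agree. So some nondegeneracy beyond ``finite, differentiable, concave'' is needed exactly where you put it: either strict concavity of $H$, in which case equality in Fenchel--Young already forces $y(z_0)=z_0$ and your primary route closes, or your fallback $\nabla G_\beta(z_0)=\beta\,\mathrm{Hess}\,H(z_0)\,(y(z_0)-z_0)=0$ with $\mathrm{Hess}\,H(z_0)$ nonsingular (this also uses $H\in\mathcal{C}^2$, which the paper does assume). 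In the paper's applications the issue is invisible because $H$ is strictly concave on the relevant region (e.g.\ $H^r(z)=-\tfrac{1}{r}\sum_j z_j^r$ on the interior of the simplex), so your proof, with the strict-concavity version of the equality-case step, fully covers what the paper actually uses; but as a proof of the lemma as literally stated it has this one unavoidable caveat, which is really a gap in the lemma's hypotheses rather than in your argument.
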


\noi 
Therefore, the set of equilibrium macrostates can be expressed in terms of the free energy functional as 
\be
\label{eqn:fefeqst}
\mathcal{E}_\beta = \left\{ z \in \mathcal{P} : z \ \mbox{minimizes} \ G_\beta(z) \right\} 
\ee

As mentioned above, we consider only the single phase region of the Gibbs ensemble; i.e.\ values of $\beta$ where $G_\beta(z)$ has a unique global minimum.  For example, for the Curie-Weiss-Potts model \cite{CET}, the single phase region are values of $\beta$ such that $0 < \beta < \beta_c := (2(q-1)/(q-2)) \log(q-1)$.  At this critical value $\beta_c$, the model undergoes a first-order, discontinuous phase transition in which the single phase changes to a multiple phase discontinuously.  

\medskip

As we will show, the geometry of the free energy functional $G_\beta$ not only determines the equilibrium behavior of the Gibbs ensembles but it also yields the condition for rapid mixing of the corresponding Glauber dynamics.

\bigskip

\subsection{Glauber Dynamics} \label{sec:glauber} 

\medskip

On the configuration space $\Lambda^n$, we define the Glauber dynamics, defined in general in subsection \ref{mt}, for the class of Gibbs ensembles $P_{n, \beta}$ defined in (\ref{eqn:gibbs}).  These dynamics yield a reversible Markov chain $X^t$ with stationary distribution being the Gibbs ensemble $P_{n, \beta}$.

\medskip

(i) Select a vertex $i$ uniformly, 

\smallskip

(ii) Update the spin at vertex $i$ according to the distribution $P_{n, \beta}$, conditioned on the event that the spins at all vertices not equal to $i$ remain unchanged.  

\medskip

For a given configuration $\sigma = (\sigma_1, \sigma_2, \ldots, \sigma_n)$, denote by $\sigma_{i, e^k}$ the configuration that agrees with $\sigma$ at all vertices $j \neq i$ and the spin at the vertex $i$ is $e^k$; i.e. 
\[ \sigma_{i, e^k} = (\sigma_1, \sigma_2, \ldots, \sigma_{i-1}, e^k, \sigma_{i+1}, \ldots, \sigma_n) \]

Then if the current configuration is $\sigma$ and vertex $i$ is selected, the probability the spin at $i$ is updated to $e^k$, denoted by $P(\sigma \goto \sigma_{i, e^k})$, is equal to 
\be
\label{eqn:tranprob1} 
P(\sigma \goto \sigma_{i, e^k}) = \frac{\exp\big\{-\beta n H(L_n(\sigma_{i, e^k}))\big\}}{ \sum_{\ell=1}^q \exp\big\{-\beta n H(L_n(\sigma_{i, e^\ell}))\big\}}. 
\ee

\medskip

Next, we show that the update probabilities of the Glauber dynamics above can be expressed in terms of the derivative of the logarithmic moment generating function of the individual spins $\Gamma$ defined in (\ref{eqn:lmgf}). The partial derivative of $\Gamma$ in the direction of $e^\ell$ has the form 
\[ \left[\partial_{\ell} \Gamma \right] (z) = \frac{\exp\{z_\ell\}}{\sum_{k=1}^q \exp\{z_k\}} \]

\medskip

\noi
We introduce the following function that plays the key role in our analysis.
\be
\label{eqn:gell}
g_{\ell}^{H, \beta}(z) = \left[ \partial_{\ell} \Gamma \right] (-\beta \nabla H(z)) = \frac{\exp\left( -\beta \, [\partial_\ell H](z) \right)}{ \sum_{k=1}^q \exp \left( -\beta \, [\partial_k H](z)  \right) }.
\ee
Denote
\be
\label{eqn:littleG}
g^{H, \beta}(z):=\Big(g_1^{H, \beta}(z),\ldots,g_q^{H, \beta}(z) \Big).
\ee
Note that $g^{H, \beta}(z)$ maps the simplex
$$ \mathcal{P} = \left\{ \nu \in \R^q : \nu = (\nu_1, \nu_2, \ldots, \nu_q), \mbox{each} \ \nu_k \geq 0, \ \sum_{k=1}^q \nu_k = 1 \right\} $$
into itself and it can be expressed in terms of the free energy functional $G_\beta$ defined in (\ref{eqn:fef}) by 
\[ \nabla G_\beta(z) = \beta [ \nabla (-H)^\ast (-\nabla H(z)) - g^{H, \beta}(z) ] \]

\medskip

\begin{lemma}
\label{lemma:transprob}
Let $P(\sigma \goto \sigma_{i, e^k})$ be the Glauber dynamics update probabilities given in (\ref{eqn:tranprob1}).  
Then, for any $k \in \{1,2,\ldots, q \}$,
$$P(\sigma \goto \sigma_{i, e^k}) =  \left[ \partial_k \Gamma \right] \Big(-\beta \nabla H(L_n(\sigma))-{\beta \over 2n}\mathcal{Q}H(L_n(\sigma))+{\beta \over n} \Big< \sigma_i, \mathcal{Q}H(L_n(\sigma))\Big> \sigma_i \Big) + O\left(\frac{1}{n^2} \right),$$
where $\mathcal{Q}$ is the following linear operator:
$$\mathcal{Q} F(z):=\left(\partial_1^2 F(z), ~\partial_2^2 F(z), ~\ldots , ~\partial_q^2 F(z)\right),$$
for any $~F: \mathbb{R}^q \rightarrow \mathbb{R}~$ in $\mathcal{C}^2$.
\end{lemma}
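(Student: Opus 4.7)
The plan is a direct second-order Taylor expansion, exploiting the separable form $H(z)=H_1(z_1)+\ldots+H_q(z_q)$. First I would note that since $\sigma_{i, e^\ell}$ differs from $\sigma$ only at vertex $i$, where the spin changes from $\sigma_i$ to $e^\ell$, the empirical measures satisfy
\[
L_n(\sigma_{i, e^\ell}) = L_n(\sigma) + \tfrac{1}{n}(e^\ell - \sigma_i),
\]
an $O(1/n)$ perturbation. Because $H\in\mathcal{C}^3$, Taylor expanding and multiplying through by $n$ yields
\[
nH(L_n(\sigma_{i,e^\ell})) = nH(L_n(\sigma)) + \langle \nabla H(L_n(\sigma)),\, e^\ell-\sigma_i\rangle + \tfrac{1}{2n}\langle e^\ell-\sigma_i,\, \nabla^2 H(L_n(\sigma))(e^\ell-\sigma_i)\rangle + O(1/n^2),
\]
uniformly in $\ell$ and $\sigma$ by compactness of $\mathcal{P}$ and boundedness of the third derivatives of $H$ on it. Separability of $H$ makes $\nabla^2 H(z)$ the diagonal matrix with entries $\mathcal{Q}H(z)$, so the quadratic form reduces to $\sum_k \partial_k^2 H(L_n(\sigma))\cdot (e^\ell-\sigma_i)_k^2$.

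Writing $\sigma_i=e^m$ for brevity, the linear term becomes $\partial_\ell H - \partial_m H$ and the quadratic form becomes $\partial_\ell^2 H + \partial_m^2 H - 2\delta_{\ell,m}\partial_m^2 H$. Multiplying by $-\beta$ and splitting, the exponent in the numerator of (\ref{eqn:tranprob1}) equals an $\ell$-independent piece, $-\beta nH(L_n(\sigma)) + \beta\,\partial_m H - \tfrac{\beta}{2n}\partial_m^2 H$, plus an $\ell$-dependent piece
\[
a_\ell := -\beta\,\partial_\ell H(L_n(\sigma)) - \tfrac{\beta}{2n}\partial_\ell^2 H(L_n(\sigma)) + \tfrac{\beta}{n}\,\delta_{\ell,m}\,\partial_m^2 H(L_n(\sigma)),
\]
modulo $O(1/n^2)$. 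The key identification is that $\delta_{\ell,m}$ is the $\ell$-th coordinate of $\sigma_i=e^m$ and $\partial_m^2 H = \langle \sigma_i,\mathcal{Q}H(L_n(\sigma))\rangle$, so the last summand of $a_\ell$ is precisely the $\ell$-th coordinate of the vector $\tfrac{\beta}{n}\langle \sigma_i,\mathcal{Q}H(L_n(\sigma))\rangle\,\sigma_i$. Hence $(a_1,\ldots,a_q)$ is exactly the argument appearing in the statement of the lemma.

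Finally, the $\ell$-independent piece factors out of both numerator and denominator of (\ref{eqn:tranprob1}) and cancels, leaving
\[
P(\sigma\to \sigma_{i,e^k}) = \frac{e^{a_k}}{\sum_{\ell=1}^q e^{a_\ell}} + O(1/n^2) = [\partial_k\Gamma]\bigl(a_1,\ldots,a_q\bigr) + O(1/n^2),
\]
by the explicit formula $[\partial_k\Gamma](z)=e^{z_k}/\sum_\ell e^{z_\ell}$. The $O(1/n^2)$ Taylor error propagates through the exponential and the ratio to $O(1/n^2)$ because $\partial_k\Gamma$ is smooth and bounded on bounded sets. The main obstacle is purely bookkeeping: one must correctly separate $\ell$-dependent from $\ell$-independent contributions in the second-order expansion and recognize the cross term $-2\delta_{\ell,m}\partial_m^2 H$ as the vectorial expression $\langle \sigma_i,\mathcal{Q}H\rangle\,\sigma_i$. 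Apart from this, the argument is a routine application of Taylor's theorem together with the explicit form of $\partial_k\Gamma$.
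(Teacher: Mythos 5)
Your proposal is correct and follows essentially the same route as the paper: a second-order Taylor expansion of $H$ at $L_n(\sigma)$ under the perturbation $\tfrac{1}{n}(e^\ell-\sigma_i)$ (the paper does this coordinate-wise using the separable form $H=H_1+\cdots+H_q$, you equivalently use the diagonal Hessian), followed by cancellation of the $\ell$-independent part of the exponent in the ratio (\ref{eqn:tranprob1}), identification of the remaining vector with $-\beta\nabla H-\tfrac{\beta}{2n}\mathcal{Q}H+\tfrac{\beta}{n}\langle\sigma_i,\mathcal{Q}H\rangle\sigma_i$, and propagation of the $O(1/n^2)$ error via $\exp\{O(1/n^2)\}=1+O(1/n^2)$. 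No gaps.
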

\bp
Suppose $\sigma_i=e^m$. By Taylor's theorem, for any $k \not= m$, we have 
\beas
H(L_n(\sigma_{i, e^k})) & = & H(L_n(\sigma)) +H_m\big(L_{n, m} (\sigma) -1/n \big)-H_m\big(L_{n, m} (\sigma) \big) \\
&  & + ~H_k\big(L_{n, k} (\sigma) +1/n \big)-H_k\big(L_{n, k} (\sigma) \big) \\
& = & H(L_n(\sigma)) + \frac{1}{n} \left[ \partial_{k} H(L_n(\sigma)) - \partial_m H(L_n(\sigma)) \right]\\
& & +~{1 \over 2n^2} \left[ \partial^2_{k} H(L_n(\sigma)) + \partial^2_m H(L_n(\sigma)) \right]+ O\left(\frac{1}{n^3} \right). 
\eeas
Now, if $k=m$,
\beas
H(L_n(\sigma_{i, e^k}))  & = & H(L_n(\sigma))\\
& = & H(L_n(\sigma)) + \frac{1}{n} \left[ \partial_{k} H(L_n(\sigma)) -\partial_m H(L_n(\sigma)) \right] \\ 
& & \hsp +{1 \over 2n^2} \left[ -\partial^2_{k} H(L_n(\sigma)) +\partial^2_m H(L_n(\sigma)) \right]. 
\eeas
This implies that the transition probability (\ref{eqn:tranprob1}) has the form
\[
P(\sigma \goto \sigma_{i, e^k}) = \left[ \partial_k \Gamma \right] \Big(-\beta \nabla H(L_n(\sigma))-{\beta \over 2n}\mathcal{Q}H(L_n(\sigma))+{\beta \over n}\partial^2_{m}H(L_n(\sigma))e^m \Big) + O\left(\frac{1}{n^2} \right) 
\]
as $\exp\big\{O\left(\frac{1}{n^2} \right) \big\}=1+O\left(\frac{1}{n^2} \right)$.
\ep

\medskip

\noindent
The above Lemma \ref{lemma:transprob} can be restated as follows using Taylor expansion.
\begin{cor} \label{cor:tp}
Let $P(\sigma \goto \sigma_{i, e^k})$ be the Glauber dynamics update probabilities given in (\ref{eqn:tranprob1}).  
Then, for any $k \in \{1,2,\ldots, q \}$,
$$P(\sigma \goto \sigma_{i, e^k}) = g_k^{H, \beta}(L_n(\sigma)) +{ \beta \over n} \varphi_{k, \sigma_i}^{H, \beta}(L_n(\sigma))+O\left(\frac{1}{n^2} \right),$$
where
$$\varphi_{k, e^r}^{H, \beta}(z):=-{1 \over 2}\Big<\mathcal{Q}H(z),  \left[\nabla \partial_{\ell} \Gamma \right] (-\beta \nabla H(z)) \Big>+\Big< e^r, \mathcal{Q} H(z)\Big> \Big<e^r,  \left[\nabla \partial_{\ell} \Gamma \right] (-\beta \nabla H(z)) \Big>  .$$
\end{cor}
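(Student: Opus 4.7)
The plan is to obtain the corollary as a direct first-order Taylor expansion of the expression from Lemma \ref{lemma:transprob}. Writing
$$\Delta_n := -\frac{\beta}{2n}\mathcal{Q}H(L_n(\sigma)) + \frac{\beta}{n}\bigl\langle \sigma_i, \mathcal{Q}H(L_n(\sigma))\bigr\rangle \sigma_i,$$
Lemma \ref{lemma:transprob} gives
$$P(\sigma \goto \sigma_{i, e^k}) = \bigl[\partial_k \Gamma\bigr]\bigl(-\beta \nabla H(L_n(\sigma)) + \Delta_n\bigr) + O(1/n^2).$$
Since $H \in \mathcal{C}^3$ and $\mathcal{Q}H$ is bounded on the compact simplex $\mathcal{P}$, we have $\|\Delta_n\| = O(1/n)$. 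The function $\Gamma$ defined in (\ref{eqn:lmgf}) is real analytic, so $\partial_k \Gamma$ is smooth with bounded derivatives on any compact set containing the argument, and Taylor's theorem applies with controlled remainder.

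First I would identify the zeroth-order term: $\bigl[\partial_k \Gamma\bigr](-\beta \nabla H(L_n(\sigma)))$, which by definition (\ref{eqn:gell}) is exactly $g_k^{H,\beta}(L_n(\sigma))$. Next, I would write out the first-order term as an inner product,
$$\bigl\langle \Delta_n, \bigl[\nabla \partial_k \Gamma\bigr](-\beta \nabla H(L_n(\sigma)))\bigr\rangle.$$
Substituting the two summands of $\Delta_n$ and using bilinearity of $\langle \cdot, \cdot \rangle$, this becomes
$$\frac{\beta}{n}\left[-\frac{1}{2}\bigl\langle \mathcal{Q}H(L_n(\sigma)), \bigl[\nabla \partial_k \Gamma\bigr](-\beta \nabla H(L_n(\sigma)))\bigr\rangle + \bigl\langle \sigma_i, \mathcal{Q}H(L_n(\sigma))\bigr\rangle \bigl\langle \sigma_i, \bigl[\nabla \partial_k \Gamma\bigr](-\beta \nabla H(L_n(\sigma)))\bigr\rangle\right],$$
which matches $\frac{\beta}{n}\varphi^{H,\beta}_{k,\sigma_i}(L_n(\sigma))$ as defined in the corollary (reading the subscript in the definition as $k$ rather than $\ell$, which appears to be a typo).

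Finally, I would bound the Taylor remainder. Since the second derivatives of $\partial_k \Gamma$ are continuous and hence uniformly bounded on the image of $-\beta \nabla H$ restricted to $\mathcal{P}$, the Taylor remainder is $O(\|\Delta_n\|^2) = O(1/n^2)$, which is absorbed into the existing error term inherited from Lemma \ref{lemma:transprob}. There is no substantive obstacle here; the only care needed is in confirming the $O(1/n^2)$ control of the remainder uniformly in $\sigma$, which follows from the compactness of $\mathcal{P}$ and smoothness of $\Gamma$ and $H$.
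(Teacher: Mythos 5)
Your proposal is correct and is exactly the paper's route: the corollary is obtained from Lemma \ref{lemma:transprob} by a first-order Taylor expansion of $\partial_k\Gamma$ around $-\beta\nabla H(L_n(\sigma))$ in the $O(1/n)$ perturbation, with the remainder absorbed into the $O(1/n^2)$ term (the paper states this in one line, while you supply the uniformity details and correctly note that the subscript $\ell$ in the definition of $\varphi_{k,e^r}^{H,\beta}$ should read $k$).
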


\medskip

In the next section, we define the specific coupling used to bound the mixing time of the Glauber dynamics for the class of Gibbs ensembles defined in subsection \ref{sec:gibbs}.
 
\bigskip

\subsection{Coupling of Glauber Dynamics} \label{glauber}

\medskip

We begin by defining a metric on the configuration space $\Lambda^n$.  For two configurations $\sigma$ and $\tau$ in $\Lambda^n$, define
\be
\label{eqn:pathmetric} 
d(\sigma, \tau) = \sum_{j=1}^n 1\{ \sigma_j \neq \tau_j \} 
\ee
which yields the number of vertices at which the two configurations differ.

\medskip

Let $X^t$ and $Y^t$ be two copies of the Glauber dynamics. Here, we use the standard greedy coupling of $X^t$ and $Y^t$.  At each time step a vertex is selected at random, uniformly from the $n$ vertices.     Suppose $X^t=\sigma$, $~Y^t=\tau$, and the vertex selected is denoted by $j$. Next, we erase the spin at location $j$ in both processes, and replace it with a new one according to the following update probabilities. For all $\ell = 1, 2, \ldots, q$, define
\[ p_\ell = P(\sigma \goto \sigma_{j, e^\ell}) \quad \mbox{and} \quad q_\ell = P(\tau \goto \tau_{j, e^\ell}) \]
and let
\[ P_\ell = \min\{ p_\ell, q_\ell \} \hsp \mbox{and} \hsp P = \sum_{\ell = 1}^q P_\ell . \]
Now, let $B$ be a Bernoulli random variable with probability of success $P$.  If $B = 1$, we update the two chains equally with the following probabilities
\[ P(X_j^{t+1} = e^\ell, Y_j^{t+1} = e^\ell \, | \, B=1) = \frac{P_\ell}{P} \]
for $\ell = 1, 2, \ldots, q$.  On the other hand, if $B= 0$, we update the chains differently according to the following probabilities
\[ P(X_j^{t+1} = e^\ell, Y_j^{t+1} = e^m \, | \, B=0) = \frac{p_\ell - P_\ell}{1-P} \cdot \frac{q_m - P_m}{1-P} \] 
for all pairs $\ell \neq m$.  Then the total probability that the two chains update the same is equal to $P$ and
the total probability that the chains update differently is equal to $1-P$.

\medskip

\noi
Observe that once $X^t=Y^t$, the processes remain matched (coupled) for the rest of the time. In the coupling literature, the time 
$$\min\{t \geq 0 ~:~ X^t=Y^t\}$$
is refered to as the {\it coupling time}.

\medskip

As discussed in section \ref{mixtimepathcoupling}, the mean coupling distance  $\mathbb{E}[d(X^t, Y^t) ]$ is tied to the total variation distance via the following inequality: 
\be \label{coupling_ineq} 
\| P^t(x, \cdot) - P^t(y, \cdot) \|_{{\scriptsize \mbox{TV}}} \leq P(X^t \neq Y^t)  \leq \mathbb{E}[d(X^t, Y^t) ] 
\ee
The above inequality implies that the order of the mean coupling time is an upper bound on the order of the mixing time. See \cite{L} and \cite{LPW} for details on coupling and coupling inequalities.

\medskip

\subsection{Mean Coupling Distance} \label{sec:mean} 

\medskip
Fix $~\varepsilon>0$. Consider two configurations $\sigma$ and $\tau$ such that
$$d(\sigma,\tau)=d,$$
where $d(\sigma, \tau) \in \mathbb{N}$ is the metric defined in (\ref{eqn:pathmetric}) and $\ve \leq \|L_n(\sigma)-L_n(\tau)\|_1 < 2\varepsilon$.

\medskip
Let $\mathcal{I}=\{i_1,\hdots,i_d\}$ be the set of vertices at which the spin values of the two configurations $\sigma$ and $\tau$ disagree.  Define $\kappa(e^\ell)$ to be the probability that the coupled processes update differently when the chosen vertex $j\not\in \mathcal{I}$ has spin $e^\ell$. If the chosen vertex $j$ is such that $\sigma_j=\tau_j=e^\ell$, then by Corollary \ref{cor:tp} of  Lemma \ref{lemma:transprob},

\bea\label{eqn:true} \nonumber
 \kappa(e^\ell) & :=  & {1 \over 2}\sum_{k=1}^q \Big|P(\sigma \goto \sigma_{j, e^k}) -P(\tau \goto \tau_{j, e^k}) \Big|\\ \nonumber
 & & \\ \nonumber
& =  & {1 \over 2}\sum_{k=1}^q \Big|  \Big(g_k^{H, \beta}(L_n(\sigma)) +{ \beta \over n} \varphi_{k, e^\ell}^{H, \beta}(L_n(\sigma)) \Big)
- \Big(g_k^{H, \beta}(L_n(\tau)) +{ \beta \over n} \varphi_{k, e^\ell}^{H, \beta}(L_n(\tau)) \Big)  \Big| +O\left( \frac{1}{n^2} \right)\\ 
 & & \\ \nonumber
& =  & {1 \over 2}\sum_{k=1}^q \Big|  g_k^{H, \beta}(L_n(\sigma)) - g_k^{H, \beta}(L_n(\tau))   \Big| +O\left(\frac{\varepsilon}{n}+ \frac{1}{n^2} \right). \nonumber
\eea

\medskip
\noindent
Next, we observe that for any $\mathcal{C}^2$ function $f:~\mathcal{P} \rightarrow \mathbb{R}$, there exists $C>0$ such that
\be
\left| f(z') - f(z) - \Big<z' - z, \nabla f(z) \Big>~\right| < C\varepsilon^2
\ee
for all $z, z' \in \mathcal{P}$ satisfying $\varepsilon \leq \|z'-z\|_1 < 2\varepsilon$.

\bigskip
\noindent
Therefore for $n$ large enough, there exists $C'>0$ such that
\begin{equation}\label{eqn:rho}
\left| \kappa(e^\ell) ~-~ {1 \over 2}\sum_{k=1}^q \Big| \Big<L_n(\tau) - L_n(\sigma), \nabla g_k^{H, \beta}(L_n(\sigma)) \Big> \Big| \right| ~<C'\varepsilon^2.
\end{equation}
The above result holds regardless of the value of $\ell \in \{1,2,\dots,q\}$. 

\medskip
\noindent
Similarly, when the chosen vertex $j \in \mathcal{I}$, the probability of not coupling at $j$ satisfies (\ref{eqn:rho}).

\bigskip
\noindent
We conclude that in terms of $\kappa_{\sigma,\tau} := {1 \over 2}\sum_{k=1}^q \Big| \Big<L_n(\tau) - L_n(\sigma), \nabla g_k^{H, \beta}(L_n(\sigma)) \Big> \Big|$, the mean distance between a coupling of the Glauber dynamics starting in $\sigma$ and $\tau$ with $d(\sigma, \tau) = d$ after one step has the form
\begin{eqnarray} \label{eqn:grad}
\mathbb{E}_{\sigma,\tau}[d(X,Y)] & \leq  & d-{d \over n} (1-\kappa_{\sigma,\tau})+{n-d \over n}\kappa_{\sigma,\tau} +c \, \varepsilon^2 \nonumber \\
&  = & d \cdot \left[1-{1 \over n}\left(1-{\kappa_{\sigma,\tau}+c \, \varepsilon^2  \over d/n} \right)  \right]
\end{eqnarray} 
for a fixed $c>0$ and all $\ve$ small enough.

\medskip

\subsection{Aggregate Path Coupling} \label{sec:agg}

\medskip

In the previous section, we derived the form of the mean distance between a coupling of the Glauber dynamics starting in two configurations whose distance is bounded.  We next derive the form of the mean coupling distance of a coupling starting in two configurations that are connected by a path of configurations where the distance between successive configurations are bounded.   

\begin{defn}
Let $\sigma$ and $\tau$ be configurations in $\Lambda^n$. We say that a path $\pi$ connecting configurations $\sigma$ and $\tau$ denoted by 
$$\pi: \ \sigma = x_0, x_1, \ldots, x_r = \tau ,$$
is a {\bf monotone path} if
\begin{itemize}
\item[(i)] $~\sum\limits_{i=1}^r d(x_{i-1},x_i) = d(\sigma,\tau)$

\item[(ii)] for each $k=1, 2, \ldots, q$, the $k$th coordinate of $L_n(x_i)$, $L_{n,k}(x_i)$ is monotonic as $i$ increases from $0$ to $r$;\\
\end{itemize}
\end{defn}
\noi
Observe that here the points $x_i$ on the path are not required to be nearest-neighbors.

\medskip

\noi
A straightforward property of monotone paths is that 
\[ \sum_{i=1}^r \sum_{k=1}^q L_{n,k}(x_i) - L_{n,k}(x_{i-1}) = L_n(\sigma) - L_n(\tau)   \]
Another straightforward observation is that for any given path
$$L_n(\sigma)=z_0, z_1,\hdots,z_r=L_n(\tau)$$
in $\mathcal{P}_n$, monotone in each coordinate, with $~\|z_i-z_{i-1}\|_1>0$ for all $i \in \{1,2,\hdots,r\}$, there exists a monotone path
$$\pi: \ \sigma = x_0, x_1, \ldots, x_r = \tau$$
such that $L_n(x_i)=z_i$ for each $i$.

\medskip

Let $\pi : \sigma = x_0, x_1, \hdots, x_r = \tau$ \ be a monotone path connecting configurations $\sigma$ and $\tau$ such that $\varepsilon \leq \|L_n(x_i) - L_n(x_{i-1}) \|_1 < 2\varepsilon $ for all $i = 1, \ldots, r$.   Equation (\ref{eqn:grad}) implies the following bound on the mean distance between a coupling of the Glauber dynamics starting in configurations $\sigma$ and $\tau$:

{\small
\bea \label{eqn:meandist}
\lefteqn{ \mathbb{E}_{\sigma, \tau}[d(X,Y)] } \nonumber \\
& \leq & \sum_{i=1}^r \mathbb{E}_{x_{i-1}, x_i}[d(X_{i-1}, X_i)]   \nonumber \\
 & \leq &  \sum\limits_{i=1}^r \left\{ d(x_{i-1},x_i) \cdot \left[1 -{1 \over n} \left(1-{{1 \over 2} \sum\limits_{k=1}^q \Big| \Big<L_n(x_i) - L_n(x_{i-1}), \nabla g_k^{H, \beta}(L_n( x_{i-1})) \Big> \Big|+c \varepsilon^2 \over d(x_{i-1},x_i)/n} \right)  \right]\right\}  \nonumber \\ 
& = & d(\sigma,\tau) \left[ 1-{1 \over n}\left(1-{\sum\limits_{k=1}^q  \sum\limits_{i=1}^r \Big| \Big<L_n(x_i) - L_n(x_{i-1}), \nabla g_k^{H, \beta}(L_n( x_{i-1})) \Big> \Big|+c \varepsilon^2 \over  2d(\sigma,\tau)/n }\right) \right]  \nonumber \\ 
 & & \\
& \leq & d(\sigma,\tau) \left[ 1-{1 \over n}\left(1-{\sum\limits_{k=1}^q  \sum\limits_{i=1}^r \Big| \Big<L_n(x_i) - L_n(x_{i-1}), \nabla g_k^{H, \beta}(L_n( x_{i-1})) \Big> \Big|+c \varepsilon^2 \over  \|L_n(\sigma)-L_n(\tau)\|_1 }\right) \right], \nonumber
\eea
}

\noindent
as $~\sum\limits_{i=1}^r d(x_{i-1},x_i) = d(\sigma,\tau)$.

\medskip

From inequality (\ref{eqn:meandist}), if there exists monotone paths between all pairs of configurations such that there is a uniform bound less than $1$ on the ratio 
\[ {\sum_{k=1}^q  \sum_{i=1}^r \Big| \Big<L_n(x_i) - L_n(x_{i-1}), \nabla g_k^{H, \beta}(L_n( x_{i-1})) \Big> \Big| \over  \|L_n(\sigma)-L_n(\tau)\|_1 } \]
then the mean coupling distance contracts which yields a bound on the mixing time via coupling inequality (\ref{coupling_ineq}).

\medskip

Although the Gibbs measure are distributions of the empirical measure $L_n$ defined on the discrete space $\mathcal{P}_n$, proving contraction of the mean coupling distance is often facilitated by working in the continuous space, namely the simplex $\mathcal{P}$.  We begin our discussion of aggregate path coupling by defining distances along paths in $\mathcal{P}$.  

\medskip

Recall the function $g^{H,\beta}$ defined in (\ref{eqn:littleG}) which is dependent on the Hamiltonian of the model through the interaction representation function $H$ defined in Definition \ref{defn:irf}. 

\begin{defn}
Define the {\bf aggregate $g$-variation} between a pair of points $x$ and $z$ in $\mathcal{P}$ along a continuous monotone  (in each coordinate) path $\rho$ to be 
$$ D_\rho^g (x, z) := \sum\limits_{k=1}^q\int\limits_{\rho} \Big| \Big<\nabla g_k^{H, \beta}(y), dy \Big> \Big| $$
Define the corresponding {\bf pseudo-distance} between a pair of points points $x$ and $z$ in $\mathcal{P}$ as
$$d_g(x,z) :=\inf_{\rho}D_\rho^g (x, z),$$
where the infimum is taken over all continuous monotone paths in $\mathcal{P}$ connecting $x$ and $z$. 
\end{defn}

Notice if the monotonicity restriction is removed, the above infimum would satisfy the triangle inequality.
We will need the following condition.
\begin{cond} \label{uniform}
Let $z_\beta$ be the unique equilibrium macrostate. There exists $\delta \in (0,1)$ such that
$${d_g(z,z_\beta) \over \|z-z_\beta\|_1} \leq 1-\delta$$
for all $z$ in $\mathcal{P}$.
\end{cond}
\noindent
Observe that  if it is shown that $~d_g(z,z_\beta) < \|z-z_\beta\|_1$ for all $z$ in $\mathcal{P}$, then by continuity the above condition is equivalent to
$$\limsup_{z \rightarrow z_\beta} {d_g(z,z_\beta) \over \|z-z_\beta\|_1} <1$$

\medskip
\noindent
Suppose Condition \ref{uniform} is satisfied. Then let denote by ${\bf NG}_\delta$ the family of  smooth curves, monotone in each coordinate such that for each $z \not= z_\beta$ in $\mathcal{P}$, there is exactly one curve $\rho=\rho_z$ in the family  ${\bf NG}_\delta$ connecting $z_\beta$ to $z$, and
$${D_\rho^g (z, z_\beta) \over \|z-z_\beta\|_1} \leq 1-\delta/2.$$
Such family of smooth curves will be referred to as {\bf neo-geodesic}.
\medskip
\noindent
\begin{cond}\label{RS}
For $\varepsilon>0$ small enough, there exists a neo-geodesic family ${\bf NG}_\delta$ such that for each $z$ in $\mathcal{P}$ satisfying $\|z-z_\beta\|_1 \geq \varepsilon$ , the curve  $\rho=\rho_z$ in the family  ${\bf NG}_\delta$ that connects $z_\beta$ to $z$ satisfies
$${\sum_{k=1}^q  \sum_{i=1}^r \Big| \Big<z_i - z_{i-1}, \nabla g_k^{H, \beta}(z_{i-1}) \Big> \Big| \over \|z-z_\beta\|_1} \leq 1-\delta/3$$
for a sequence of points $z_0=z_\beta,z_1,\hdots,z_r=z$ interpolating $\rho$ such that
$$\varepsilon \leq \|z_i - z_{i-1}\|_1  < 2\varepsilon \quad \text{ for } i=1,2,\hdots, r.$$
\end{cond}

\medskip
\noindent
It is important to observe that Condition \ref{uniform} is often simpler to verify than Condition \ref{RS}. Moreover,  under certain simple additional prerequisites, Condition \ref{uniform} implies Condition \ref{RS}. For example, this is achieved if there is a uniform bound on the Cauchy curvature at every point of every curve in ${\bf NG}_\delta$. So it will be demonstrated on the example of the generalized Curie-Weiss-Potts model in section \ref{GCWP} that  the natural way for  establishing  Condition \ref{RS} for the model is via first establishing Condition \ref{uniform}.

\medskip

In addition to Condition \ref{RS} that will be shown to imply contraction when one of the two configurations in the coupled processes is at the equilibrium, i.e. $L_n(\sigma)=z_\beta$ , we need a condition that will imply contraction between two configurations within a neighborhood of the equilibrium configuration.  We state this assumption next.  

\begin{cond}
\label{ass:localbound}
Let $z_\beta$ be the unique equilibrium macrostate. Then,
$$\limsup\limits_{z \rightarrow z_\beta} {\|g^{H, \beta}(z)-g^{H, \beta}(z_\beta)\|_1 \over \|z-z_\beta\|_1} <1.$$
\end{cond}

\noindent
Since $H(z) \in \mathcal{C}^3$, the above Condition \ref{ass:localbound} implies that for any $\ve>0$ sufficiently small, there exists $\gamma \in (0,1)$ such that
$$ {\|g^{H, \beta}(z)-g^{H, \beta}(w)\|_1 \over \|z-w\|_1} <1-\gamma$$
for all $z$ and $w$ in $\mathcal{P}$ satisfying
$$\|z-z_\beta\|_1<\ve \quad \text{ and }\quad \|w-z_\beta\|_1<\ve.$$

\bigskip

\subsection{Main Result} \label{sec:main} 

\medskip

As discussed in section \ref{mixtimepathcoupling}, a sufficient condition for rapid mixing of the Glauber dynamics of Gibbs ensembles is contraction of the mean coupling distance $\mathbb{E}_{\sigma, \tau}[d(X,Y)]$ between coupled processes starting in all pairs of configurations in $\Lambda^n$.  The classical path coupling argument stated in Proposition \ref{classicalPC} is a method of obtaining this contraction by only proving contraction between couplings starting in \underline{neighboring} configurations.  However for some classes of models (e.g. models that undergo a first-order, discontinuous phase transition) there are situations when Glauber dynamics exhibits rapid mixing, but coupled processes do not exhibit contraction between some neighboring configurations.  A major strength of the {\bf aggregate path coupling} method is that it yields a proof for rapid mixing even in those cases when contraction of the mean distance between couplings starting in all pairs of neighboring configurations does not hold.

\medskip

The strategy is to take advantage of the large deviations estimates discussed in section \ref{ldp}.  Recall from that section that we assume that the set of equilibrium macrostates $\mathcal{E}_\beta$, which can be expressed in the form given in (\ref{eqn:fefeqst}), consists of a single point $z_\beta$.  Define an {\it equilibrium configuration} $\sigma_\beta$ to be a configuration such that 
\[ L_n(\sigma_\beta) = z_\beta = ((z_\beta)_1, (z_\beta)_2, \ldots, (z_\beta)_q). \]  
First we observe that in order to use the coupling inequality (\ref{coupling_ineq}) we need to show contraction of the mean coupling distance $\mathbb{E}_{\sigma, \tau}[d(X,Y)]$ between a  Markov chain initially distributed according to the stationary probability distribution $ P_{n, \beta}$ and a Markov chain starting at any given configuration.
Using large deviations we know that with high probability the former process starts near the equilibrium and stays near the equilibrium for long duration of time.

Our main result Theorem \ref{thm:main}  states that once we establish contraction of the mean coupling distance between two copies of a Markov chain where one of the coupled dynamics starts near an equilibrium configuration in Lemma \ref{lemma:contract}, then this contraction, along with the large deviations estimates of the empirical measure $L_n$, yields rapid mixing of the Glauber dynamics converging to the Gibbs measure. 

\medskip

Now, the classical path coupling relies on showing contraction along any monotone path connecting two configurations, in one time step. Here we observe that we only need to show contraction along \underline{one} monotone path connecting two configurations in order to have the mean coupling distance $\mathbb{E}_{\sigma, \tau}[d(X,Y)]$ contract in a single time step. However, finding even one monotone path with which we can show contraction in the equation (\ref{eqn:meandist}) is not easy. The answer to this is in finding a monotone path $\rho$ in $\mathcal{P}$ connecting the $L_n$ values of the two configurations, $\sigma$ and $\tau$, such that 
$${\sum\limits_{k=1}^q\int\limits_{\rho} \Big| \Big<\nabla g_k^{H, \beta}(y), dy \Big> \Big| \over  \|L_n(\sigma)-L_n(\tau)\|_1 } ~<1 $$
Although $\rho$ is a continuous  path in continuous space $\mathcal{P}$, it serves as Ariadne's thread for finding a monotone path
$$\pi:~\sigma=x_0,~x_1,\hdots,x_r=\tau$$
such that $L_n(x_0),~L_n(x_1),\hdots,L_n(x_r)$ in $\mathcal{P}_n$ are positioned along $\rho$, and
$$\sum\limits_{k=1}^q  \sum\limits_{i=1}^r \Big| \Big<L_n(x_i) - L_n(x_{i-1}), \nabla g_k^{H, \beta}(L_n( x_{i-1})) \Big> \Big|$$
is a Riemann sum approximating $~\sum\limits_{k=1}^q\int\limits_{\rho} \Big| \Big<\nabla g_k^{H, \beta}(y), dy \Big> \Big|$. Therefore we obtain
$${\sum\limits_{k=1}^q  \sum\limits_{i=1}^r \Big| \Big<L_n(x_i) - L_n(x_{i-1}), \nabla g_k^{H, \beta}(L_n( x_{i-1})) \Big> \Big| \over  \|L_n(\sigma)-L_n(\tau)\|_1 } ~<1,$$
that in turn implies contraction in (\ref{eqn:meandist}) for $\ve$ small enough and $n$ large enough. See Figure \ref{fig:simplex}.

\begin{figure} 
\includegraphics[scale=0.4]{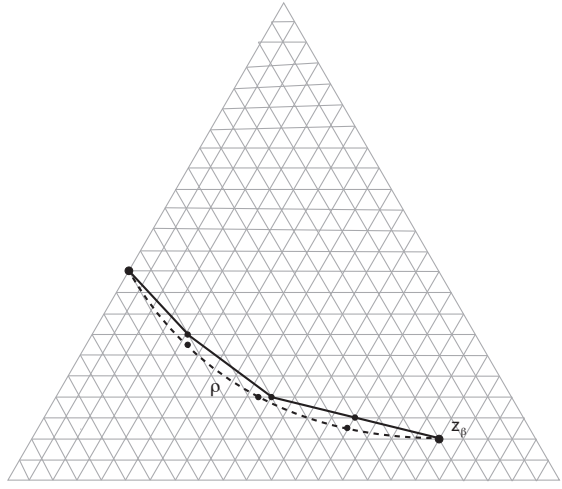}
\caption{Case $q=3$. Dashed curve is the continuous monotone path $\rho$. Solid lines represent the path $L_n(x_0),~L_n(x_1),\hdots,L_n(x_r)$ in $\mathcal{P}_n$.}
\label{fig:simplex}
\end{figure}

Observe that in order for the above argument to work, we need to spread points $L_n(x_i) \in \mathcal{P}_n$ along a continuous path $\rho$ at intervals of fixed order $\ve$.  Thus $\pi$ has to be {\bf not a nearest-neighbor path} in the space of configurations, another significant deviation from the classical path coupling.

\medskip

\begin{lemma}
\label{lemma:contract}
Assume Condition \ref{RS} and Condition \ref{ass:localbound}. Let $(X,Y)$ be a coupling of the Glauber dynamics as defined in Section \ref{glauber}, starting in configurations $\sigma$ and $\tau$ and let $z_\beta$ be the single equilibrium macrostate of the corresponding Gibbs ensemble. 
Then there exists an $\alpha>0$ and an $\varepsilon'>0$ small enough such that for $n$ large enough,
\[ \mathbb{E}_{\sigma,\tau} [ d(X,Y)] \leq e^{-\alpha/n} d(\sigma, \tau) \]
whenever $\|L_n(\sigma) -z_\beta \|_1<\varepsilon' $.
\end{lemma}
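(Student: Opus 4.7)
The plan is to bound the mean coupling distance via the aggregate formula (\ref{eqn:meandist}) applied along a monotone configuration path from $\sigma$ to $\tau$ whose $L_n$-image traces a continuous curve in $\mathcal{P}$, chosen to exploit Condition~\ref{ass:localbound} when $L_n(\tau)$ also lies near $z_\beta$ and Condition~\ref{RS} otherwise. Fix $\varepsilon > 0$ small enough for Condition~\ref{RS}, and take $\varepsilon' \in (0, \varepsilon)$ to be specified.

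First I would handle the local case $\|L_n(\tau) - z_\beta\|_1 < 2\varepsilon$, in which both $L_n(\sigma)$ and $L_n(\tau)$ lie within an $\varepsilon$-scale neighborhood of $z_\beta$. By the remark following Condition~\ref{ass:localbound}, there exists $\gamma \in (0,1)$ with $\|g^{H,\beta}(z) - g^{H,\beta}(w)\|_1 \leq (1-\gamma)\|z-w\|_1$ throughout that neighborhood. Take the Euclidean segment from $L_n(\sigma)$ to $L_n(\tau)$, which is monotone in each coordinate, interpolate it at $\ell_1$-spacing in $[\varepsilon, 2\varepsilon)$, and lift it to a monotone configuration path. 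Taylor-expanding each $g_k^{H,\beta}$ along the segment and then invoking the Riemann-sum approximation yields
\[
\sum_{k,i}\bigl|\langle L_n(x_i)-L_n(x_{i-1}),\,\nabla g_k^{H,\beta}(L_n(x_{i-1}))\rangle\bigr| \leq \|g^{H,\beta}(L_n(\tau)) - g^{H,\beta}(L_n(\sigma))\|_1 + O(\varepsilon\,\|L_n(\tau)-L_n(\sigma)\|_1),
\]
so the ratio appearing in (\ref{eqn:meandist}) is bounded by $1 - \gamma/2$ once $\varepsilon$ is small.

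Next I would handle the far case $\|L_n(\tau) - z_\beta\|_1 \geq 2\varepsilon$, in which $\|L_n(\sigma) - L_n(\tau)\|_1 \geq \varepsilon$. Apply Condition~\ref{RS} to obtain the neo-geodesic $\rho_{L_n(\tau)}$ from $z_\beta$ to $L_n(\tau)$, together with interpolating points $z_0 = z_\beta, z_1,\ldots,z_r = L_n(\tau)$ satisfying the $(1-\delta/3)$ bound. Prepend a short monotone-in-each-coordinate polygonal segment from $L_n(\sigma)$ to $z_\beta$ of $\ell_1$-length less than $\varepsilon'$, adjust the join so that overall coordinate-monotonicity is preserved and step sizes stay in $[\varepsilon/2, 2\varepsilon)$, and lift to a monotone configuration path from $\sigma$ to $\tau$. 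Using the Lipschitz bound on $\nabla g_k^{H,\beta}$, the prepended segment contributes at most $C\varepsilon'$ to the aggregate sum, while the main part is at most $(1-\delta/3)\|L_n(\tau)-z_\beta\|_1 \leq (1-\delta/3)(\|L_n(\sigma)-L_n(\tau)\|_1+\varepsilon')$. Dividing through by $\|L_n(\sigma)-L_n(\tau)\|_1 \geq \varepsilon$ and taking $\varepsilon'/\varepsilon$ sufficiently small bounds the ratio by $1 - \delta/4$.

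In either case, substituting into (\ref{eqn:meandist}) gives $\mathbb{E}_{\sigma,\tau}[d(X,Y)] \leq d(\sigma,\tau)(1 - \alpha_0/n + O(1/n^2))$ for some $\alpha_0 > 0$ independent of $n$, and $1-\alpha_0/n + O(1/n^2) \leq e^{-\alpha/n}$ for any $0 < \alpha < \alpha_0$ and $n$ large enough. The main technical obstacle lies in the far case: since $L_n(\sigma)$ is only within $\varepsilon'$ of $z_\beta$ rather than equal to it, naively concatenating a detour with $\rho_{L_n(\tau)}$ can violate coordinate-monotonicity in some components. The remedy is to shift the join from $z_\beta$ to a nearby point on $\rho_{L_n(\tau)}$ whose coordinates match the sign pattern of $L_n(\sigma)-L_n(\tau)$; this is possible because $\varepsilon' \ll \varepsilon$ and $g^{H,\beta}$ is $\mathcal{C}^3$, and it changes the aggregate sum by only $O(\varepsilon')$, which is already absorbed in the estimate above.
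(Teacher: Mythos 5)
Your overall architecture (split into a near case handled by Condition \ref{ass:localbound} and a far case handled by Condition \ref{RS}, plug the resulting ratio bound into (\ref{eqn:meandist}), and convert $1-\alpha_0/n$ into $e^{-\alpha/n}$) is the same as the paper's, but your far-case path construction has a genuine gap. You concatenate a short detour from $L_n(\sigma)$ to a join point $z^*$ on the neo-geodesic $\rho_{L_n(\tau)}$ and claim that choosing $z^*$ ``whose coordinates match the sign pattern of $L_n(\sigma)-L_n(\tau)$'' restores coordinate-monotonicity. Such a $z^*$ need not exist: since $L_n(\sigma)\neq z_\beta$, there can be coordinates $k$ with $(z_\beta)_k < L_n(\tau)_k < L_n(\sigma)_k$ (or the mirror situation), which happens precisely when $|L_n(\tau)_k-(z_\beta)_k|<\varepsilon'$. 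For such $k$ the whole curve $\rho$ moves coordinate $k$ upward toward $L_n(\tau)_k$, while the path from $\sigma$ to $\tau$ must move it downward; no choice of join point on $\rho$ (short of one where that coordinate has already reached its terminal value, which generically is only the endpoint) makes the concatenation monotone, and monotonicity is exactly what guarantees $\sum_i d(x_{i-1},x_i)=d(\sigma,\tau)$ in (\ref{eqn:meandist}). The paper avoids this by not concatenating: it perturbs the entire interpolating sequence $z_\beta=L_n(x_0'),\dots,L_n(x_r')=L_n(\tau)$ into a path started at $L_n(\sigma)$ whose increments for $i=2,\dots,r$ are same-sign sub-increments of the reference ones (freezing the problematic coordinates and absorbing the mismatch in the first step), which costs $O(r\varepsilon'/\varepsilon)$ in the ratio and motivates the choice $\varepsilon'=\varepsilon^2\delta/M$ rather than merely $\varepsilon'\ll\varepsilon$.

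There is also a smaller hole in your near case: you prescribe interpolating the segment from $L_n(\sigma)$ to $L_n(\tau)$ at $\ell_1$-spacing in $[\varepsilon,2\varepsilon)$, which is impossible when $\|L_n(\sigma)-L_n(\tau)\|_1<\varepsilon$ --- e.g.\ for neighboring configurations, a regime the lemma must cover for the iteration in Theorem \ref{thm:main}. No path decomposition is needed there: as in the paper's Case II, one applies the one-step estimate coming from (\ref{eqn:true}) with the exact difference $\|g^{H,\beta}(L_n(\sigma))-g^{H,\beta}(L_n(\tau))\|_1$ and the Lipschitz consequence of Condition \ref{ass:localbound}, which you already have at hand; with that fix, and with the far-case construction replaced by a path perturbation as above, your argument goes through.
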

\bp
Let $\ve$ and $\delta$ be as in Condition \ref{RS}, and let $\ve'=\ve^2 \delta/M$ with a constant $M\gg0$. 

\medskip
\noindent
{\bf Case I.} Suppose $L_n(\tau)=z$ and $L_n(\sigma)=w$, where $\|z-z_\beta\|_1 \geq \ve$ and $\|w-z_\beta\|_1< \ve'$.

\medskip
\noindent
Then there is an equlibrium configuration $\sigma_\beta$ with $L_n(\sigma_\beta)=z_\beta$ such that there is a {\it monotone path}
$$\pi':~\sigma_\beta=x'_0,~x'_1,\hdots,x'_r=\tau$$
connecting configurations $\sigma_\beta$ and $\tau$ on $\Lambda^n$ such that
$\ve  \leq\|L_n(x'_i) - L_n(x'_{i-1})\|_1<2\ve$, and by Condition \ref{RS}, 
$${\sum\limits_{k=1}^q  \sum\limits_{i=1}^r \Big| \Big<L_n(x'_i) - L_n(x'_{i-1}), \nabla g_k^{H, \beta}(L_n( x'_{i-1})) \Big> \Big| \over  \|L_n(\sigma_\beta)-L_n(\tau)\|_1 } \leq 1-\delta/4$$
for $n$ large enough. Note that the difference between the above inequality and Condition \ref{RS} is that here we take $L_n(x'_i) \in \mathcal{P}_n$.

\medskip
\noindent
Now,  there exists a monotone path with from $\sigma$ to $\tau$
$$\pi:~\sigma=x_0,~x_1,\hdots,x_r=\tau$$
such that 
$$\|L_n(x_i)-L_n(x'_i)\|_1 \leq \ve' \qquad \text{ for all }i=0,1,\hdots,r.$$
The new monotone path $\pi$ is constructed from $\pi'$ by insuring that either
$$0 \leq \Big<L_n(x_i)-L_n(x_{i-1}),e^k\Big> \leq  \Big<L_n(x'_i)-L_n(x'_{i-1}),e^k\Big> $$
or
$$ \Big<L_n(x'_i)-L_n(x'_{i-1}),e^k\Big>  \leq  \Big<L_n(x_i)-L_n(x_{i-1}),e^k\Big>  \leq 0$$
for $i=2,\hdots,r$ and each coordinate $k \in \{1,2,\hdots,q\}$.

\medskip
\noindent
Then
{\footnotesize
$$\left|{\sum\limits_{k=1}^q  \sum\limits_{i=1}^r \Big| \Big<L_n(x_i) - L_n(x_{i-1}), \nabla g_k^{H, \beta}(L_n( x_{i-1})) \Big> \Big| \over  \|L_n(\sigma)-L_n(\tau)\|_1 }
-{\sum\limits_{k=1}^q  \sum\limits_{i=1}^r \Big| \Big<L_n(x'_i) - L_n(x'_{i-1}), \nabla g_k^{H, \beta}(L_n( x'_{i-1})) \Big> \Big| \over  \|L_n(\sigma_\beta)-L_n(\tau)\|_1 }\right| \leq C'' r \ve' / \ve$$
}
\noi
for a fixed constant $C''>0$. Noticing that $~r\ve'/\ve \leq \delta /M~$ as $~r \leq 1/\ve$, and taking $M$ large enough, we obtain
$${\sum\limits_{k=1}^q  \sum\limits_{i=1}^r \Big| \Big<L_n(x_i) - L_n(x_{i-1}), \nabla g_k^{H, \beta}(L_n( x_{i-1})) \Big> \Big| \over  \|L_n(\sigma)-L_n(\tau)\|_1 } \leq 1-\delta/4.$$

\medskip
\noindent
Thus equation (\ref{eqn:meandist}) will imply 

{\small
\beas
\mathbb{E}_{\sigma, \tau}[d(X,Y)] & \leq & d(\sigma,\tau) \left[ 1-{1 \over n}\left(1-{\sum\limits_{k=1}^q  \sum\limits_{i=1}^r \Big| \Big<L_n(x_i) - L_n(x_{i-1}), \nabla g_k^{H, \beta}(L_n( x_{i-1})) \Big> \Big|+c \, \varepsilon^2 \over  \|L_n(\sigma)-L_n(\tau)\|_1 }\right) \right] \\
& \leq &  d(\sigma,\tau) \left[ 1 -{1 \over n}\big(1- (1-\delta/4)-\delta/20 \big) \right]\\
& = &  d(\sigma,\tau) \left[ 1 -{1 \over n}\delta/5 \right]
\eeas
}
as $~{c \, \varepsilon^2 \over  \|L_n(\sigma)-L_n(\tau)\|_1 } ~\leq ~c \, \ve ~\leq ~\delta/20~$ for 
$\ve$ small enough.\\

\bigskip
\noindent
{\bf Case II.} Suppose $L_n(\tau)=z$ and $L_n(\sigma)=w$, where $\|z-z_\beta\|_1 < \ve$ and $\|w-z_\beta\|_1< \ve'$.

\medskip
\noindent
Similarly to (\ref{eqn:grad}), equation (\ref{eqn:true}) implies for $n$ large enough,
\beas
\mathbb{E}[d(X,Y)] &  \leq & d(\sigma,\tau) \cdot \left[1-{1 \over n}\left(1-{\|g^{H,\beta}\big(L_n(\sigma)\big)-g^{H,\beta}\big(L_n(\tau)\big)\|_1 \over  \|L_n(\sigma)-L_n(\tau)\|_1} \right)  \right] + O\left( {1 \over n^2}\right)\\
&  \leq & d(\sigma,\tau) \cdot \left[1-{\gamma \over n} \right] + O\left( {1 \over n^2}\right)\\
&  \leq & d(\sigma,\tau) \cdot \left[1-{\gamma \over 2n} \right] 
\eeas
 by Condition \ref{ass:localbound} (see also discussion following Condition \ref{ass:localbound}).
\ep

We now state and prove the main theorem of the paper that yields sufficient conditions for rapid mixing of the Glauber dynamics of the class of statistical mechanical models discussed.

\begin{thm}
\label{thm:main}
Suppose $H(z)$ and $\beta>0$ are such that  Condition \ref{RS} and Condition \ref{ass:localbound} are satisfied. Then the mixing time of the Glauber dynamics satisfies
$$t_{\text{{\em \mix}}} = O(n \log n)$$
\end{thm}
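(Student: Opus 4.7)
The plan is to mirror the strategy used for the mean-field Blume-Capel model in the discontinuous regime (see the second Theorem \ref{thm:mfBC}), now driven by Lemma \ref{lemma:contract}, which supplies the aggregate contraction near equilibrium, together with the large deviation estimate of Theorem \ref{thm:sanov} and the concentration (\ref{eqn:ldplimit}). The key idea is that although the mean coupling distance need not contract between all pairs of configurations, it does contract when one of the two coupled chains is close to the equilibrium macrostate $z_\beta$; and a chain started from stationarity stays close to $z_\beta$ with overwhelming probability.

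Concretely, let $(X_t,Y_t)$ be the greedy coupling of Section \ref{glauber}, with $X_0$ arbitrary and $Y_0$ distributed according to $P_{n,\beta}$, so that $Y_t \sim P_{n,\beta}$ for every $t$. Let $\varepsilon'$ and $\alpha$ be as in Lemma \ref{lemma:contract} and define the event $A_{t-1} := \{\|L_n(Y_{t-1}) - z_\beta\|_1 < \varepsilon'\}$. On $A_{t-1}$, Lemma \ref{lemma:contract} gives
$$\mathbb{E}\!\left[d(X_t,Y_t) \,\big|\, X_{t-1},Y_{t-1}\right] \leq e^{-\alpha/n}\, d(X_{t-1},Y_{t-1}),$$
while on $A_{t-1}^c$ we have the trivial bound $d(X_t,Y_t) \leq n$. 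Taking expectations and using stationarity of $Y_{t-1}$ yields
$$\mathbb{E}[d(X_t,Y_t)] \leq e^{-\alpha/n}\,\mathbb{E}[d(X_{t-1},Y_{t-1})] + n\, P_{n,\beta}\{\|L_n - z_\beta\|_1 \geq \varepsilon'\}.$$
Iterating this recursion from $t$ down to $0$ and using $\mathbb{E}[d(X_0,Y_0)] \leq n$ gives
$$\mathbb{E}[d(X_t,Y_t)] \leq n\, e^{-\alpha t/n} + n\, t\, P_{n,\beta}\{\|L_n - z_\beta\|_1 \geq \varepsilon'\}.$$

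To finish, I would bound the rare-event probability via the LDP. Since $z_\beta$ is the unique zero of the rate function $I_\beta$, and $I_\beta$ is lower semicontinuous with compact level sets, the closed set $F_{\varepsilon'} := \{z \in \mathcal{P} : \|z - z_\beta\|_1 \geq \varepsilon'\}$ has $r := \inf_{z \in F_{\varepsilon'}} I_\beta(z) > 0$. The large deviation upper bound in Theorem \ref{thm:sanov} then yields $P_{n,\beta}(L_n \in F_{\varepsilon'}) \leq e^{-nr/2}$ for all $n$ large enough. Combining with the coupling inequality (\ref{coupling_ineq}) gives
$$\|P^t(X_0,\cdot) - P_{n,\beta}\|_{TV} \leq n\, e^{-\alpha t/n} + n\, t\, e^{-nr/2}.$$
Setting $t = \left\lceil (n/\alpha)(\log n + \log(2/\varepsilon)) \right\rceil$ forces the first term to be at most $\varepsilon/2$, and the second term decays super-polynomially in $n$ (hence is also at most $\varepsilon/2$ for $n$ sufficiently large). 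Therefore $t_{\text{{\em \mix}}}(\varepsilon) = O(n\log n)$, as claimed.

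The conceptually hard work has already been discharged inside Lemma \ref{lemma:contract} — namely, the construction of a near-equilibrium monotone path on which the Riemann sum of the aggregate $g$-variation is bounded strictly below $\|L_n(\sigma)-L_n(\tau)\|_1$, using Conditions \ref{RS} and \ref{ass:localbound}. What remains for the theorem is bookkeeping: iterating one-step contraction along the stationary $Y$-chain and absorbing the exponentially rare excursions of $L_n(Y_t)$ outside the $\varepsilon'$-neighborhood of $z_\beta$. The one subtlety to flag is the strict positivity of the gap $r$, which relies crucially on the assumption that $z_\beta$ is the \emph{unique} equilibrium macrostate in the parameter region under consideration; without uniqueness, $P_{n,\beta}$ could place nonnegligible mass near a second minimizer of $G_\beta$, breaking the exponential decay of $P_{n,\beta}(F_{\varepsilon'})$.
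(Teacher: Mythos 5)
Your proposal is correct and follows essentially the same route as the paper's own proof: couple a stationary copy with an arbitrary one, apply the one-step contraction of Lemma \ref{lemma:contract} on the event that the stationary chain's empirical measure is within $\varepsilon'$ of $z_\beta$, iterate, and kill the complementary term with the LDP upper bound of Theorem \ref{thm:sanov}, choosing $t$ of order $(n/\alpha)\log n$. The only differences are cosmetic (you make the stationary chain $Y$ rather than $X$, which is harmless by the symmetry of the greedy coupling, and you write the LDP bound as $e^{-nr/2}$ instead of $e^{-(n/\gamma')I_\beta(\varepsilon')}$).
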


\begin{proof}  
Let $\ve'>0$ and $\alpha>0$ be as in Lemma \ref{lemma:contract}.
Let $(X^t, Y^t)$ be a coupling of the Glauber dynamics such that $X^0 \overset{dist}{=} P_{n, \beta}$, the stationary distribution.  
Then, for sufficiently large $n$,
\beas
\lefteqn{ \| P^t(Y^0, \cdot) - P_{n, \beta} \|_{\TV} } \\
& \hsp \hsp \leq & P \{ X^t \neq Y^t \} \\
& \hsp \hsp = & P \{ d(X^t, Y^t ) \geq 1 \} \\
& \hsp \hsp \leq & \mathbb{E} [ d(X^t,Y^t)] \\
& \hsp \hsp = & \mathbb{E} [ {\scriptstyle \mathbb{E}[d(X^t,Y^t)~|X^{t-1},Y^{t-1}]} ]\\
& \hsp \hsp \leq & \mathbb{E} [ {\scriptstyle \mathbb{E}[d(X^t,Y^t)~|X^{t-1},Y^{t-1}]}~|~\|L_n(X^{t-1}) - z_\beta\|_1 < \varepsilon' ] \cdot P\{\|L_n(X^{t-1}) - z_\beta\|_1<\varepsilon' \} \\
& & + n P\{\|L_n(X^{t-1}) - z_\beta\|_1 \geq \varepsilon' \} .
\eeas

\noindent
By Lemma \ref{lemma:contract}, we have
\bea
\label{eqn:meandistcontract}
\lefteqn{ \mathbb{E} [ {\scriptstyle \mathbb{E}[d(X^t,Y^t)~|X^{t-1},Y^{t-1}]}~|~\|L_n(X^{t-1}) - z_\beta\|_1 < \varepsilon' ] } \nonumber \\
& \hsp \hsp \hsp & \leq e^{-\alpha /n} \mathbb{E} [ d(X^{t-1},Y^{t-1})~|~\|L_n(X^{t-1}) - z_\beta\|_1 < \varepsilon' ] 
\eea

\noi
By iterating (\ref{eqn:meandistcontract}), it follows that 
\beas
\lefteqn{ \| P^t(Y^0, \cdot) - P_{n, \beta, K} \|_{\mbox{{\small TV}}} } \\
& \hsp \hsp  \leq & e^{-\alpha /n} \mathbb{E} [ d(X^{t-1},Y^{t-1})~|~\|L_n(X^{t-1}) - z_\beta\|_1 < \varepsilon' ] \cdot P\{\|L_n(X_{t-1}) - z_\beta\|_1<\varepsilon' \} \\
& & + n P\{\|L_n(X^{t-1}) - z_\beta\|_1 \geq \varepsilon' \} \\
& \hsp \hsp  \leq & e^{-\alpha /n} \mathbb{E} [ d(X^{t-1},Y^{t-1})]+nP\{\|L_n(X^{t-1}) - z_\beta\|_1 \geq \varepsilon' \} \\
& \hsp \hsp \vdots & \vdots\\
& \hsp \hsp \leq & e^{-\alpha t/n} \mathbb{E} [ d(X^0,Y^0)] +n \sum_{s=0}^{t-1} P\{\|L_n(X^s) - z_\beta\|_1 \geq \varepsilon' \} \\
& \hsp \hsp = & e^{-\alpha t/n} \mathbb{E} [ d(X^0,Y^0)]+n t P_{n, \beta}\{\|L_n(X^0) - z_\beta\|_1 \geq \ve'\} \\
& \hsp \hsp \leq & n e^{-\alpha t/n}+ntP_{n, \beta}\{\|L_n(X^0) - z_\beta\|_1 \geq \varepsilon' \} .
\eeas

\noi
We recall the LDP limit (\ref{eqn:ldplimit}) for $\beta$ in the single phase region $B$, 
\[ P_{n, \beta}\{ L_n(X^0) \in dx \} \Longrightarrow \delta_{z_\beta} \hsp \mbox{as $n \goto \infty$}. \]
Moreover, for any $\gamma'>1$ and $n$ sufficiently large, by the LDP upper bound, we have
\beas
\|P^t(Y^0, \cdot) - P_{n, \beta} \|_{\TV} & \leq & n e^{-\alpha t/n}+n t P_{n, \beta}\{\|L_n(X^0) - z_\beta\|_1 \geq \varepsilon' \}  \\
&  <  & n e^{-\alpha t/n} +t n e^{-{n \over \gamma'}I_{\beta }(\varepsilon')} .
\eeas
For $t = {n \over \alpha} (\log n + \log (2/\ve'))$, the above right-hand side converges to $\ve'/2$ as $n \goto \infty$. 
\ep

\bigskip

\section{Aggregate Path Coupling applied to the Generalized Potts Model} \label{GCWP} 

\medskip

In this section, we illustrate the strength of our main result of section \ref{genspinmodels}, Theorem \ref{thm:main}, by applying it to the generalized Curie-Weiss-Potts model (GCWP), studied recently in \cite{JKRW}.  The classical Curie-Weiss-Potts (CWP) model, which is the mean-field version of the well known Potts model of statistical mechanics \cite{Wu} is a particular case of the GCWP model with $r=2$.  While the mixing times for the CWP model has been studied in \cite{CDLLPS, KO}, these are the first results for the mixing times of the GCWP model.

\medskip

Let $q$ be a fixed integer and define $\Lambda = \{ e^1, e^2, \ldots, e^q \}$, where $e^k$ are the $q$ standard basis vectors of $\R^q$.  A  {\it configuration} of the model has the form $\omega = (\omega_1, \omega_2, \ldots, \omega_n) \in \Lambda^n$.  We will consider a configuration on a graph with $n$ vertices and let $X_i(\omega) = \omega_i$ be the {\it spin} at vertex $i$.  The random variables $X_i$'s for $i=1, 2, \ldots, n$ are independent and identically distributed with common distribution $\rho$.  

For the generalized Curie-Weiss-Potts model, for $r \geq 2$, the interaction representation function, defined in general in (\ref{eqn:IntRepFunct}), has the form
\[ H^r(z) = - \frac{1}{r} \sum_{j=1}^q z_j^r \]
and the generalized Curie-Weiss-Potts model is defined as the Gibbs measure
\be 
\label{eqn:GCWPgibbs}
P_{n, \beta, r} (B) = \frac{1}{Z_n(\beta)} \int_B \exp \left\{  -\beta n \, H^r\left(L_n(\omega) \right) \right\} dP_n 
\ee
where $L_n(\omega)$ is the empirical measure defined in (\ref{eqn:empmeasure}).

\medskip

In \cite{JKRW}, the authors proved that there exists a phase transition critical value $\beta_c(q, r)$ such that in the parameter regime $(q, r) \in \{2\} \times [2,4]$, the GCWP model undergoes a continuous, second-order, phase transition and for $(q,r)$ in the complementary regime, the GCWP model undergoes a discontinuous, first-order, phase transition.  This is stated in the following theorem.

\begin{thm}[Generalized Ellis-Wang Theorem]
\label{thm:EW}
Assume that $q \geq 2$ and $r \geq 2$.  Then there exists a critical temperature $\beta_c(q,r) > 0$ such that in the weak limit
\[ \lim_{n \goto \infty} P_{n, \beta, r} (L_n \in \cdot) = \left\{ \begin{array}{ll} \delta_{1/q(1, \ldots, 1)} & \mbox{if} \ \beta < \beta_c(q,r) \vspace{.1in} \\   \frac{1}{q} \sum_{i=1}^q \delta_{u(\beta, q, r) e^i + (1-u(\beta, q, r))/q(1, \ldots, 1)} & \mbox{if} \ \beta > \beta_c(q,r) \end{array} \right. \]
where $u(\beta, q, r)$ is the largest solution to the so-called mean-field equation 
\[ u = \frac{1 - \exp( \Delta(u))}{1+(q-1) \exp (\Delta (u))} \]
with $\Delta (u) :=-{\beta \over q^{r-1}}\big[(1+(q-1)u)^{r-1}-(1-u)^{r-1} \big]$.  Moreover, for $(q, r) \in \{2\} \times [2,4]$, the function $\beta \mapsto u(\beta, q, r)$ is continuous whereas, in the complementary case, the function is discontinuous at $\beta_c(q,r)$.
\end{thm}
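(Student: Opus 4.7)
My plan is to apply the large deviation principle of Theorem \ref{thm:sanov} to the Gibbs ensemble $P_{n,\beta,r}$ defined in (\ref{eqn:GCWPgibbs}). Under this LDP, the empirical measure $L_n$ concentrates on the set $\mathcal{E}_\beta$ of minimizers of
\[ F_\beta(z) := R(z\mid\rho) + \beta H^r(z) = \sum_{k=1}^q z_k\log(q z_k) - \frac{\beta}{r}\sum_{k=1}^q z_k^r \]
over the simplex $\mathcal{P}$, where $\rho$ is uniform on $\{e^1,\ldots,e^q\}$. Once $\mathcal{E}_\beta$ is identified, the weak limits in the theorem follow: when $\mathcal{E}_\beta = \{(1/q)(1,\ldots,1)\}$ the LDP upper bound forces $L_n \Rightarrow \delta_{(1/q)(1,\ldots,1)}$, while when $\mathcal{E}_\beta$ consists of the $S_q$-orbit of $u e^i + \frac{1-u}{q}(1,\ldots,1)$ the weak limit is the corresponding uniform mixture by the symmetry of $P_{n,\beta,r}$ under permutation of the spin labels.

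The core of the argument is the identification of $\mathcal{E}_\beta$. I would first write the Lagrange condition for an interior critical point of $F_\beta$ under the constraint $\sum z_k = 1$, which reads
\[ \log(q z_k) + 1 - \beta z_k^{r-1} = \lambda \qquad (k=1,\ldots,q). \]
For each fixed $\lambda$ and $\beta > 0$, the scalar equation $\log(qx) + 1 - \beta x^{r-1} = \lambda$ has at most two positive solutions in $(0,1]$, since the left-hand side, viewed as a function of $x$, is either monotone or unimodal for $r\ge 2$. Hence at any critical point the coordinates $z_1,\ldots,z_q$ take at most two distinct values, say $a$ (with multiplicity $1\le m\le q$) and $b$ (with multiplicity $q-m$). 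Using the $S_q$-symmetry of $F_\beta$ together with a direct comparison of $F_\beta$ on configurations with different multiplicities $m$, I would show that the global minimizers occur either at the symmetric point $m=0$ (equivalently $a=b=1/q$) or on the orbit $m=1$. Writing $a = (1+(q-1)u)/q$ and $b = (1-u)/q$ in the case $m=1$, the Lagrange condition reduces to the scalar mean-field equation
\[ \log\frac{1+(q-1)u}{1-u} = \frac{\beta}{q^{r-1}}\Big[(1+(q-1)u)^{r-1}-(1-u)^{r-1}\Big], \]
which rearranges to the stated form $u = (1-e^{\Delta(u)})/(1+(q-1)e^{\Delta(u)})$.

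The final step is to analyze this 1D problem. Define $\psi_\beta(u) := F_\beta(z(u))$ on $[0,1)$; then $\psi_\beta(0)=R((1/q,\ldots,1/q)\mid\rho)+\beta H^r((1/q,\ldots,1/q))$ and $\psi_\beta'(0)=0$. Let $u^*(\beta)$ be the largest solution of the mean-field equation. The critical inverse temperature is defined by
\[ \beta_c(q,r) := \inf\{\beta>0 : \psi_\beta(u^*(\beta)) \le \psi_\beta(0)\}, \]
so that for $\beta<\beta_c$ the symmetric point is the unique global minimizer, and for $\beta>\beta_c$ the minimum is attained on the $S_q$-orbit of $u^*(\beta) e^i + (1-u^*(\beta))/q\,(1,\ldots,1)$. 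Existence and positivity of $\beta_c$ follow from examining the two limits $\beta\to 0^+$ (where $u=0$ is the only solution) and $\beta\to\infty$ (where $u\to 1$ is the global minimizer).

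The main obstacle is the continuity/discontinuity dichotomy in the last sentence of the theorem. This is a bifurcation question: whether $\beta_c$ coincides with the linear instability threshold $\psi_\beta''(0)=0$ (a pitchfork bifurcation, giving continuity of $u^*(\beta)$) or whether a nonzero local minimum of $\psi_\beta$ appears at some $\beta_1<\beta_c$ via a saddle-node bifurcation and only overtakes the symmetric well at $\beta_c$ (discontinuous, first-order). Computing $\psi_\beta''(0)$ gives the Hessian eigenvalue along any direction tangent to the orbit, yielding a closed expression in $(q,r)$; the continuous case is then characterized by vanishing of an appropriate third-derivative obstruction together with positivity of a fourth-derivative term at the bifurcation. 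A careful Taylor expansion of the mean-field equation around $u=0$ at the instability threshold reduces the classification to an explicit inequality in $(q,r)$, which singles out the regime $(q,r)\in\{2\}\times[2,4]$ as the only one in which the bifurcation is supercritical; all other $(q,r)$ give a subcritical bifurcation and hence a first-order jump. This classification, carried out in \cite{JKRW}, is the technical heart of the result.
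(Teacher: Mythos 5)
First, a point of comparison: the paper does not prove Theorem \ref{thm:EW} at all --- it is imported verbatim from \cite{JKRW} (the survey only uses it, e.g.\ in Lemma \ref{lemma:CritIneq}), so there is no internal proof to measure your proposal against. Your outline is the natural Ellis--Wang-type variational route: Sanov plus tilting (Theorem \ref{thm:sanov}) reduces the weak limits to identifying the minimizers of $R(z\mid\rho)+\beta H^r(z)$ on the simplex, the Lagrange condition plus unimodality of $x\mapsto\log(qx)+1-\beta x^{r-1}$ shows critical points have at most two distinct coordinate values, and the parametrization $a=(1+(q-1)u)/q$, $b=(1-u)/q$ correctly recovers the stated mean-field equation. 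The use of $S_q$-symmetry to pass from concentration on the orbit to the uniform mixture in the supercritical phase is also fine.

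However, as a proof the proposal has two genuine gaps, both at the hard points. First, the step ``a direct comparison of $F_\beta$ on configurations with different multiplicities $m$ shows the global minimizers occur at $m=0$ or $m=1$'' is asserted, not argued; ruling out minimizers with two or more ``large'' coordinates (and showing the larger of the two values $a>b$ carries multiplicity one) is a substantive analytic step even in the classical $r=2$ case, and nothing in your sketch indicates how the comparison goes for general $r$. You also need, but do not address, that the asymmetric branch strictly beats the symmetric point for \emph{all} $\beta>\beta_c$ (a monotonicity-in-$\beta$ argument), not merely above your infimum. Second, and more seriously, the ``Moreover'' clause --- the continuity of $\beta\mapsto u(\beta,q,r)$ exactly when $(q,r)\in\{2\}\times[2,4]$ and its discontinuity otherwise --- is the actual content of the theorem, and your treatment reduces to describing the super-/subcritical bifurcation dichotomy in general terms and then stating that ``this classification, carried out in \cite{JKRW}, is the technical heart of the result.'' That is circular as a proof: \cite{JKRW} is precisely the source of the theorem, so deferring the classification to it leaves the statement unproved. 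To close this you would need the explicit Taylor expansion of the scalar free energy $\psi_\beta$ (or of the mean-field equation) at $u=0$ at the instability threshold and the resulting sign analysis of the relevant higher-order coefficient as a function of $(q,r)$, carried out rather than invoked.
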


\medskip

For the GCWP model, the function $g_{\ell}^{H, \beta}(z)$ defined in general in (\ref{eqn:gell}) has the form
\[ 
g_{k}^{H, \beta}(z) = \left[ \partial_{k} \Gamma \right] (\beta \nabla H(z)) = \left[ \partial_{k} \Gamma \right] (\beta z) = {e^{\beta z_k^{r-1}} \over e^{\beta z_1^{r-1}}+ \ldots +e^{\beta z_q^{r-1}}}.
\]
For the remainder of this section, we will replace the notation $H, \beta$ and refer to $g^{H, \beta}(z)=\big(g_1^{H, \beta}(z),\hdots,g_q^{H, \beta}(z)\big)$ as simply $g^r(z)=\big(g_1^r(z),\hdots,g_q^r(z)\big)$.  As we will prove next, the rapid mixing region for the GCWP model is defined by the following value.

\bea
\label{eqn:mixcrit} 
\beta_s(q, r) := \sup \left\{ \beta \geq 0 : g_k^r(z) < z_k  \ \mbox{for all $z \in \mathcal{P}$ such that}  \ z_k \in (1/q, 1] \right\}. 
\eea

\medskip

\begin{lemma} 
\label{lemma:CritIneq}
If $\beta_c(q,r)$ is the critical value derived in \cite{JKRW} and defined in Theorem \ref{thm:EW}, then
$$\beta_s(q,r) \leq \beta_c(q,r)$$ 
\end{lemma}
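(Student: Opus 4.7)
The plan is to show that every $\beta > \beta_c(q,r)$ fails the condition that defines the set $\{\beta \geq 0 : g_k^r(z) < z_k \text{ for all } z \in \mathcal{P} \text{ with } z_k \in (1/q,1]\}$, so that $\beta_s(q,r) \leq \beta_c(q,r)$ by the supremum characterization. Concretely, I would exhibit, for each such $\beta$, a probability vector $z \in \mathcal{P}$ with $z_1 \in (1/q,1]$ at which $g_1^r(z) = z_1$.

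The first step is to identify the equilibrium macrostates of the GCWP model with fixed points of $g^r$. For $H^r(z) = -\frac{1}{r}\sum_j z_j^r$ and uniform reference measure $\rho_k = 1/q$, the constrained variational problem of minimizing $R(z\,|\,\rho) + \beta H^r(z)$ over the simplex yields, via Lagrange multipliers, the Euler--Lagrange equations
\[
\log(q z_k) + 1 - \beta z_k^{r-1} = \lambda \qquad \text{for all } k = 1,\ldots,q.
\]
Exponentiating and using $\sum_k z_k = 1$ to eliminate $\lambda$ gives $z_k = e^{\beta z_k^{r-1}}/\sum_j e^{\beta z_j^{r-1}} = g_k^r(z)$, so $\mathcal{E}_\beta \subset \{z : g^r(z) = z\}$.

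Next I invoke the Generalized Ellis--Wang Theorem (Theorem \ref{thm:EW}): for any $\beta > \beta_c(q,r)$ the weak limit of $P_{n,\beta,r}(L_n \in \cdot)$ is a uniform mixture of point masses at $v^{(i)} := u\, e^i + \frac{1-u}{q}(1,\ldots,1)$, $i=1,\ldots,q$, where $u = u(\beta,q,r) > 0$. The LDP upper bound in Theorem \ref{thm:sanov} forces the support of any such weak limit to be contained in $\mathcal{E}_\beta$, so each $v^{(i)}$ is an equilibrium macrostate and hence a fixed point of $g^r$ by the previous step. Taking $i=1$, the first coordinate is
\[
v_1^{(1)} = u + \frac{1-u}{q} = \frac{1 + (q-1)u}{q},
\]
which lies strictly in $(1/q, 1]$ precisely because $u \in (0,1]$. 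Since $g_1^r(v^{(1)}) = v_1^{(1)}$, the strict inequality $g_1^r(z) < z_1$ fails at $z = v^{(1)}$.

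Consequently no $\beta > \beta_c(q,r)$ belongs to the set defining $\beta_s(q,r)$, and the inequality $\beta_s(q,r) \leq \beta_c(q,r)$ follows at once. The only delicate point is the identification of equilibria with fixed points of $g^r$, which in any case one could verify directly by substituting $v^{(1)}$ into $g_1^r$ and using the mean-field equation $u = (1 - e^{\Delta(u)})/(1 + (q-1)e^{\Delta(u)})$ from Theorem \ref{thm:EW}; no substantive obstacle arises.
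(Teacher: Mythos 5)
Your proposal is correct, but it takes a genuinely different route from the paper's. The paper argues by contradiction: assuming $\beta_c(q,r)<\beta_s(q,r)$, it picks $\beta$ in between, extracts from Theorem \ref{thm:EW} a $u>0$ satisfying the \emph{strict} inequality $u<(1-e^{\Delta(u)})/(1+(q-1)e^{\Delta(u)})$, substitutes $\lambda=(1-u)\frac{q-1}{q}$, and shows that at $z=(1-\lambda,\lambda/(q-1),\ldots,\lambda/(q-1))$ one has $g_1^r(z)>z_1$ with $z_1>1/q$. You instead show directly that every $\beta>\beta_c(q,r)$ lies outside the set in (\ref{eqn:mixcrit}): the limit points $v^{(i)}$ of Theorem \ref{thm:EW} must, by the LDP upper bound of Theorem \ref{thm:sanov} (a portmanteau argument), lie in $\mathcal{E}_\beta$, and interior minimizers of $R(\cdot\,|\,\rho)+\beta H^r$ satisfy the first-order condition $g^r(z)=z$; since $v^{(1)}_1=(1+(q-1)u)/q>1/q$, the equality $g_1^r(v^{(1)})=v^{(1)}_1$ already defeats the strict inequality in the definition of $\beta_s$, so no contradiction setup and no strict-inequality version of the mean-field relation are needed. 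Your handling of the supremum is in fact logically cleaner: you show the defining set is contained in $[0,\beta_c(q,r)]$, whereas the paper's ``contradicting $\beta<\beta_s(q,r)$'' tacitly treats every $\beta$ below the supremum as belonging to the set. Note that your $v^{(1)}$ is exactly the paper's $z$, since $1-\lambda=(1+(q-1)u)/q$. Your fallback of substituting $v^{(1)}$ into $g_1^r$ and using the mean-field equation is the most economical path: the equation gives $1+(q-1)u=q/(1+(q-1)e^{\Delta(u)})$, hence $g_1^r(v^{(1)})=1/(1+(q-1)e^{\Delta(u)})=v^{(1)}_1$, which is essentially the paper's computation run with equality in place of inequality; it also sidesteps the two small points your main route leaves implicit (that the relevant minimizers are interior, true here since $u(\beta,q,r)<1$ for finite $\beta$, and that the support of the weak limit is contained in $\mathcal{E}_\beta$).
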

\begin{proof}
We will prove this lemma by contradiction. Suppose $\beta_c(q,r) < \beta_s(q,r)$. Then there exists $\beta$ such that 
$$\beta_c(q,r) < \beta < \beta_s(q,r).$$
Then, by Theorem \ref{thm:EW}, since $\beta_c(q,r) <\beta$, there exists $u>0$ satisfying the following inequality
\begin{equation}\label{ineq:du}
u < {1-e^{\Delta(u)} \over 1+(q-1)e^{\Delta(u)}},
\end{equation}
where $~\Delta(u):=-{\beta \over q^{r-1}}\big[(1+(q-1)u)^{r-1}-(1-u)^{r-1} \big]$. Here, the above inequality (\ref{ineq:du}) rewrites as 
\begin{equation}\label{ineq:du2}
e^{\Delta(u)}=\exp\left\{\beta\left[\left({1-u \over q} \right)^{r-1}-\left({1+(q-1)u \over q} \right)^{r-1}\right]\right\} ~<~ {1-u \over (q-1)u+1}.
\end{equation}
Next, we substitute $\lambda=(1-u){q-1 \over q}$ into the above inequality (\ref{ineq:du2}), obtaining
\begin{equation}\label{ineq:du3}
\exp\left\{\beta\left[\left({\lambda \over q-1} \right)^{r-1}-\left(1-\lambda \right)^{r-1}\right]\right\} ~<~ {\lambda \over (1-\lambda)(q-1)}.
\end{equation}
Now, consider 
$$z=\left(1-\lambda, {\lambda \over q-1},\hdots, {\lambda \over q-1} \right).$$
Observe that $~z_1=1-\lambda=1-(1-u){q-1 \over q}={1+u(q-1) \over q}>{1 \over q}~$ as $u>0$.
Here, the inequality (\ref{ineq:du3}) can be consequently rewritten in terms of the above selected $z$ as follows
$$z_1=1-\lambda ~<~{e^{\beta(1-\lambda)^{r-1}} \over e^{\beta(1-\lambda)^{r-1}}+(q-1)e^{\beta\big({\lambda \over q-1}\big)^{r-1}}}=g_1^r(z),$$
thus contradicting $\beta < \beta_s(q,r)$. Hence $~\beta_s(q,r) \leq \beta_c(q,r)$.
\end{proof}

Combining Theorem \ref{thm:EW} and Lemma \ref{lemma:CritIneq} yields that for parameter values $(q, r)$ in the continuous, second-order phase transition region $\beta_s(q,r) = \beta_c(q,r)$, whereas in the discontinuous, first-order, phase transition region, $\beta_s(q,r)$ is strictly less than $\beta_c(q,r)$. This relationship between the equilibrium transition critical value and the mixing time transition critical value was also proved for the mean-field Blume-Capel model discussed in section \ref{MFBC}.  This appears to be a general distinguishing feature between models that exhibit the two distinct type of phase transition.  We now prove rapid mixing for the generalized Curie-Weiss-Potts model for $\beta < \beta_s(q,r)$ using the aggregate path coupling method derived in section \ref{genspinmodels}.

\medskip
We state the lemmas that we prove below, and the main result for the Glauber dynamics of the generalized Curie-Weiss-Potts model, a Corollary  to Theorem \ref{thm:main}. Let $\beta_s(q)$ be the mixing time critical value for the GCWP model defined in (\ref{eqn:mixcrit}).  

\begin{lemma}\label{cwp1}
Condition \ref{uniform} and Condition \ref{RS} are satisfied for all $\beta < \beta_s(q)$.
\end{lemma}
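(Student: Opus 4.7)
The plan is to verify Condition \ref{uniform} by exploiting the permutation symmetry of the GCWP model together with the defining property of $\beta_s(q)$, and then to deduce Condition \ref{RS} via a curvature bound on the family of straight segments, as in the remark preceding Lemma \ref{cwp1}. Throughout, $z_\beta = (1/q, \ldots, 1/q)$ is the unique equilibrium macrostate for $\beta < \beta_s(q) \leq \beta_c(q)$ by Lemma \ref{lemma:CritIneq}.

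As a first step, I would establish a uniform $\ell^1$ contraction
\[
\|g^r(z) - z_\beta\|_1 \;\leq\; (1 - \delta)\,\|z - z_\beta\|_1 \qquad (z \in \mathcal{P}\setminus\{z_\beta\})
\]
for some $\delta \in (0,1)$. For $\beta < \beta_s(q)$ the definition of $\beta_s$ in (\ref{eqn:mixcrit}) yields $g_k^r(z) < z_k$ at every coordinate with $z_k > 1/q$. Combining this with the balance identity $\sum_k (g_k^r(z) - z_k) = 0$, the permutation equivariance of $g^r$, smoothness of $g^r$ on the closed simplex, and compactness of $\mathcal{P}$, the strict pointwise inequality upgrades to the stated uniform $\ell^1$ bound (with extra care on the boundary, where the symmetry argument ensures that deficits at coordinates with $z_k < 1/q$ are controlled by the excesses at coordinates with $z_k > 1/q$).

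Next, for each $z \in \mathcal{P}\setminus\{z_\beta\}$ I would take the straight segment $\rho_z(t) = z_\beta + t(z - z_\beta)$, $t \in [0,1]$, as the curve in the neo-geodesic family ${\bf NG}_\delta$; this segment is trivially monotone in each coordinate. The crucial claim is that $t \mapsto g_k^r(\rho_z(t))$ is itself monotone for each $k$, so that the total variation collapses and
\[
D^g_{\rho_z}(z, z_\beta) \;=\; \sum_{k=1}^q \int_0^1 \Bigl|\tfrac{d}{dt}\,g_k^r(\rho_z(t))\Bigr|\,dt \;=\; \sum_{k=1}^q \bigl|g_k^r(z) - \tfrac{1}{q}\bigr| \;=\; \|g^r(z) - z_\beta\|_1,
\]
which combined with the first step gives $d_g(z, z_\beta) \leq (1-\delta)\,\|z - z_\beta\|_1$, i.e., Condition \ref{uniform}. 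The monotonicity follows from differentiating the explicit formula $g_k^r(y) = e^{\beta y_k^{r-1}}/\sum_\ell e^{\beta y_\ell^{r-1}}$: writing $y = \rho_z(t)$ one obtains
\[
\tfrac{d}{dt}\,g_k^r(\rho_z(t)) \;=\; \beta(r-1)\,g_k^r(y)\!\left[y_k^{r-2}\bigl(z_k - \tfrac{1}{q}\bigr) - \sum_{\ell=1}^q y_\ell^{r-2}\,g_\ell^r(y)\,\bigl(z_\ell - \tfrac{1}{q}\bigr)\right],
\]
and at $t=0$ the bracket reduces to $(1/q)^{r-2}(z_k - 1/q)$, so its sign agrees with $z_k - 1/q$. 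I would propagate this sign to all of $(0,1]$ by using the $\ell^1$ contraction of $g^r$ from the first step to dominate the weighted competing sum inside the bracket.

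Finally, Condition \ref{RS} follows from Condition \ref{uniform} because the family $\{\rho_z\}$ of straight segments has zero Cauchy curvature: discretizing $\rho_z$ into $z_0 = z_\beta, z_1, \ldots, z_r = z$ with $\varepsilon \leq \|z_i - z_{i-1}\|_1 < 2\varepsilon$ produces a Riemann sum that approximates $D^g_{\rho_z}(z, z_\beta)$ up to an $O(\varepsilon \cdot \|z - z_\beta\|_1)$ error by the uniform $C^1$ bound on $\nabla g^r$, so the continuous ratio bound $1-\delta$ passes to the discrete bound $1 - \delta/3$ required by Condition \ref{RS} for all sufficiently small $\varepsilon$. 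The main technical obstacle is the monotonicity of $g_k^r$ along the straight segment; should it fail for some atypical $z$, the fallback is to replace $\rho_z$ by a piecewise-affine monotone path that raises the excess coordinates ($z_k > 1/q$) one at a time, along each of which the sign of the corresponding directional derivative of $g^r$ is manifest from the gradient formula above.
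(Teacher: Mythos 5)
Your overall architecture (straight segments from $z_\beta$ to $z$ as the neo-geodesic family, then a Riemann-sum/curvature argument to pass from Condition \ref{uniform} to Condition \ref{RS}) matches the paper, and the final discretization step is fine. The genuine gap is your central claim that $t \mapsto g_k^r(\rho_z(t))$ is monotone for \emph{every} coordinate $k$, so that $D^g_{\rho_z}(z,z_\beta)$ collapses to $\|g^r(z)-z_\beta\|_1$. This is false in general. Writing $\frac{d}{dt}g_k^r(z(t)) = \beta(r-1)g_k^r(z(t))\bigl[\,z(t)_k^{r-2}(z_k-\tfrac1q) - \langle z-z_\beta, g^r(z(t))\rangle_\rho\,\bigr]$, the weighted average $\langle z-z_\beta, g^r(z(t))\rangle_\rho$ starts at $0$ and is strictly increasing in $t$ (the variance argument), so for a coordinate $k$ with $z_k>1/q$ that is \emph{not} the maximal coordinate (possible as soon as $q\geq 3$), the weighted average can overtake $z(t)_k^{r-2}(z_k-\tfrac1q)$ at some interior time: $g_k^r$ increases and then decreases. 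Your proposed fix --- "propagate the sign using the $\ell^1$ contraction" --- cannot work, because the $\ell^1$ contraction of $g^r$ controls distances to $z_\beta$, not the competition inside the bracket, and the overshoot is a real phenomenon, not an artifact. The paper handles exactly this: it only proves "at most one critical point $t_k^\ast$" for $k\in A_z=\{k: z_k>1/q\}$, computes the variation on such coordinates as $2g_k^r(z(t_k^\ast)) - g_k^r(z) - \tfrac1q$, and then --- this is where $\beta<\beta_s$ is actually used --- bounds the height of the interior maximum by $g_k^r(z(t_k^\ast)) < z(t_k^\ast)_k \leq z_k$, giving $D^g_\rho(z,z_\beta) < 2\sum_{k\in A_z}(z_k-\tfrac1q)=\|z-z_\beta\|_1$. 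In your version the defining property of $\beta_s$ is applied only at the endpoint $z$, which is not enough once monotonicity fails; the fallback via coordinate-at-a-time piecewise paths is not worked out and faces the same overshoot issue, besides leaving unverified the required ratio against $\|z-z_\beta\|_1$.

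A secondary gap: in your first step, "smoothness and compactness" do not upgrade the pointwise strict inequality to a uniform $(1-\delta)$ bound, because the ratio could tend to $1$ as $z\to z_\beta$. You need the explicit linearization at $z_\beta$, namely $\limsup_{z\to z_\beta}\|g^r(z)-g^r(z_\beta)\|_1/\|z-z_\beta\|_1 = \beta(r-1)(1/q)^{r-2}/q < 1$, which is precisely the computation (\ref{zero}) used for Condition \ref{ass:localbound}; the paper invokes it at the end of the proof of Lemma \ref{cwp1} to get the uniform constant $\delta$ and hence the ${\bf NG}_\delta$ property. This part of your argument is fixable by citing that computation, but as written it is incomplete.
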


\begin{lemma}\label{cwp2}
Condition \ref{ass:localbound} is satisfied for all $\beta < \beta_s(q)$.
\end{lemma}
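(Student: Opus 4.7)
The plan is to reduce Condition~\ref{ass:localbound} to a linear-algebraic statement about the Jacobian $Dg^r$ at the equilibrium macrostate $z_\beta$. Since $g^r$ is $C^2$ on a neighborhood of the simplex $\mathcal{P}$, Taylor's theorem will show that the $\limsup$ in Condition~\ref{ass:localbound} equals the $\ell^1$ operator norm of $Dg^r(z_\beta)$ restricted to the tangent space $T = \{v \in \mathbb{R}^q : \sum_k v_k = 0\}$ of $\mathcal{P}$ at $z_\beta$. By Lemma~\ref{lemma:CritIneq}, $\beta < \beta_s(q,r) \leq \beta_c(q,r)$, so Theorem~\ref{thm:EW} gives $z_\beta = \tfrac{1}{q}(1,\ldots,1)$.

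Next, I would compute the Jacobian directly from $g_\ell^r(z) = e^{\beta z_\ell^{r-1}}/\sum_k e^{\beta z_k^{r-1}}$, obtaining
$$\frac{\partial g_\ell^r}{\partial z_m}(z_\beta) = c\,\delta_{\ell m} - \frac{c}{q}, \qquad c := \frac{\beta(r-1)}{q^{r-1}},$$
so that $Dg^r(z_\beta) = c(I - J/q)$, where $J$ is the $q\times q$ all-ones matrix. Since any $z \in \mathcal{P}$ satisfies $z - z_\beta \in T$ and $(I - J/q)$ acts as the identity on $T$, we get $Dg^r(z_\beta)(z - z_\beta) = c(z - z_\beta)$. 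A uniform $C^2$ Taylor estimate in a neighborhood of $z_\beta$ in $\mathcal{P}$ then gives
$$\|g^r(z) - g^r(z_\beta)\|_1 = c\,\|z - z_\beta\|_1 + O\bigl(\|z - z_\beta\|_1^2\bigr),$$
so the $\limsup$ in question equals $c$.

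It remains to verify $c < 1$ for $\beta < \beta_s(q,r)$. I will establish the sharper bound $\beta_s(q,r) \leq q^{r-1}/(r-1)$, which is equivalent to $c \leq 1$ whenever $\beta \leq \beta_s$. Suppose instead that $\beta > q^{r-1}/(r-1)$, so $c > 1$. Consider the curve $z(s) = z_\beta + s\bigl(e^1 - \tfrac{1}{q-1}\sum_{j \neq 1} e^j\bigr)$, which lies in $\mathcal{P}$ for $s > 0$ sufficiently small, with $z_1(s) = 1/q + s > 1/q$. The computation above specializes to $\tfrac{d}{ds}g_1^r(z(s))\big|_{s=0} = c$, so
$$g_1^r(z(s)) - z_1(s) = (c-1)s + O(s^2) > 0$$
for $s$ small, contradicting the defining property (\ref{eqn:mixcrit}) of $\beta_s$. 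Hence $\beta < \beta_s$ forces $c < 1$, completing the verification of Condition~\ref{ass:localbound}.

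The main obstacle is essentially cosmetic: the Jacobian computation and the uniformity of the Taylor remainder on $\mathcal{P}$ near $z_\beta$ are routine given the smoothness of $g^r$ on a neighborhood of the compact simplex. The one conceptual ingredient worth highlighting is the observation that $(I - J/q)$ restricts to the identity on $T$, which collapses the problem to the scalar bound $c < 1$ — a bound that $\beta_s$ itself must satisfy via a one-line linearization at $z_\beta$.
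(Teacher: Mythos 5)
Your proposal is correct, and its first half (identifying the $\limsup$ in Condition \ref{ass:localbound} with $c=\beta(r-1)/q^{r-1}$) is essentially the paper's own computation, just packaged as the Jacobian $Dg^r(z_\beta)=c(I-J/q)$ acting as $c\cdot\mathrm{id}$ on the tangent space $\{\sum_k v_k=0\}$ rather than as a coordinatewise Taylor expansion; both arguments use $\sum_k(z_k-1/q)=0$ in the same way. Where you genuinely diverge is in verifying $c<1$: the paper at this point simply cites \cite{CDLLPS} for the chain $\beta<\beta_s(q)<\beta_c(q)<q$, which literally covers only $r=2$ (where $c=\beta/q$), so as written its justification for general $r$ is incomplete. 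You instead prove the needed bound $\beta_s(q,r)\leq q^{r-1}/(r-1)$ directly from the definition (\ref{eqn:mixcrit}): if $c>1$, linearizing $g_1^r$ along $z(s)=z_\beta+s\bigl(e^1-\tfrac{1}{q-1}\sum_{j\neq 1}e^j\bigr)$ gives $g_1^r(z(s))-z_1(s)=(c-1)s+O(s^2)>0$ for small $s>0$, violating the defining property of $\beta_s$; hence every $\beta$ in that set satisfies $\beta\le q^{r-1}/(r-1)$, and $\beta<\beta_s$ forces $c<1$. This is a self-contained, one-linearization argument valid for all $r\ge 2$ and all $q$, and it buys you independence from the external citation (indeed it patches the gap in the paper's own proof for $r>2$); the paper's route, when it applies, buys only brevity. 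The remaining ingredients you use (uniform $C^2$ Taylor remainder on the compact simplex, and $z_\beta=\tfrac1q(1,\dots,1)$ for $\beta<\beta_s\le\beta_c$ via Lemma \ref{lemma:CritIneq} and Theorem \ref{thm:EW}) are sound.
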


\begin{cor}
\label{cor:CWPrapid} 
If $\beta < \beta_s(q)$, then
\[ t_{\mbox{{\em \mix}}} = O(n \log n). \]
\end{cor}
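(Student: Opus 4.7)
The plan is to apply Theorem \ref{thm:main} directly. Its hypotheses are Condition \ref{RS} and Condition \ref{ass:localbound}, and these are precisely the content of Lemma \ref{cwp1} and Lemma \ref{cwp2} for the GCWP interaction representation function $H^r(z) = -\tfrac{1}{r}\sum_j z_j^r$ in the parameter range $\beta < \beta_s(q,r)$. Once those two lemmas are in hand, Theorem \ref{thm:main} immediately yields $t_{\mix} = O(n \log n)$, which is the corollary. The substantive work is inside the two lemmas; below I outline how I would approach each.

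For Lemma \ref{cwp1} I would verify Condition \ref{uniform} first and then upgrade to Condition \ref{RS} using the remark in Section \ref{sec:agg} that a uniform bound on the Cauchy curvature of the neo-geodesic family suffices for this implication; the bound is available here because $g^r$ is smooth on the compact simplex $\mathcal{P}$. To verify Condition \ref{uniform}, I would exploit the $S_q$-symmetry of the GCWP model to reduce to $z \in \mathcal{P}$ with coordinates in decreasing order and take the straight-line path $\rho(t) = (1-t)z_\beta + tz$ from $z_\beta = (1/q,\ldots,1/q)$ to $z$. This path is monotone in every coordinate, and — assuming monotonicity of each $g_k^r$ along it, the delicate point — the aggregate $g$-variation telescopes to $D_\rho^g(z_\beta,z) = \sum_k |g_k^r(z) - 1/q|$. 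The defining inequality $g_k^r(z) < z_k$ for $z_k > 1/q$ from (\ref{eqn:mixcrit}), combined with the conservation identity $\sum_k g_k^r(z) = \sum_k z_k = 1$, then gives the strict pointwise bound $D_\rho^g(z_\beta,z) < \|z - z_\beta\|_1$; a compactness argument on $\mathcal{P} \setminus B_\eta(z_\beta)$, merged with the local analysis from Lemma \ref{cwp2} near $z_\beta$, promotes this to a uniform contraction factor $1-\delta$.

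For Lemma \ref{cwp2} I would compute the Jacobian $Dg^r(z_\beta)$ directly. A short quotient-rule calculation gives
\[ \partial_\ell g_k^r(z_\beta) = \frac{\beta(r-1)}{q^{r-1}}\left(\delta_{k\ell} - \frac{1}{q}\right), \]
so on the tangent space to the simplex, $Dg^r(z_\beta)$ acts as scalar multiplication by $\lambda_\beta := \beta(r-1)/q^{r-1}$, and its $\ell^1 \to \ell^1$ operator norm on this subspace equals $\lambda_\beta$. Testing the defining inequality (\ref{eqn:mixcrit}) on perturbations $z = z_\beta + \epsilon\bigl(e^k - \tfrac{1}{q}(1,\ldots,1)\bigr)$ and linearizing shows that the pointwise condition $g_k^r(z) < z_k$ at first order is equivalent to $\lambda_\beta < 1$, so $\beta < \beta_s(q,r)$ forces the strict inequality $\lambda_\beta < 1$. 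This gives Condition \ref{ass:localbound} in the $\ell^1$ norm.

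The main obstacle I foresee lies in Lemma \ref{cwp1}: verifying that $g_k^r$ really is monotone along the straight-line path from $z_\beta$ to an arbitrary $z \in \mathcal{P}$. If monotonicity fails near the boundary of the simplex, one would have to replace the straight line by a curve adapted to the level sets of $g^r$, or split the path at turning points and absorb the extra contributions using the strict inequality from $\beta < \beta_s(q,r)$ together with the permutation symmetry. In either case, the strictness of the inequality in (\ref{eqn:mixcrit}) and the compactness of $\mathcal{P}$ provide enough slack to close the argument.
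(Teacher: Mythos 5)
Your proof of the corollary is exactly the paper's: the result follows immediately from Theorem \ref{thm:main} once Condition \ref{RS} and Condition \ref{ass:localbound} are supplied by Lemma \ref{cwp1} and Lemma \ref{cwp2}. Your supplementary sketches also track the paper's proofs of those lemmas (straight-line neo-geodesics for Conditions \ref{uniform} and \ref{RS}, and the Jacobian computation giving $\beta(r-1)/q^{r-1}<1$ for Condition \ref{ass:localbound}); the monotonicity issue you flag is resolved in the paper by showing that along the line path $g_k^r(z(t))$ has at most one critical point $t_k^\ast$ when $z_k>1/q$, and bounding the resulting extra variation via $g_k^r(z(t_k^\ast))<z(t_k^\ast)_k\le z_k$, which is essentially the splitting-at-turning-points fallback you describe.
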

\bp
Condition \ref{RS} and Condition \ref{ass:localbound} required for Theorem \ref{thm:main} are satisfied by Lemma \ref{cwp1} and Lemma \ref{cwp2}.
\ep

\medskip
\noindent
\bp[Proof of Lemma \ref{cwp2}]
Denote $~z'=(z'_1,\hdots,z'_q)=z-z_\beta$. Then by Taylor's Theorem, we have
\bea \label{zero} 
\limsup_{z \rightarrow z_\beta} {\|g^r(z)-g^r(z_\beta)\|_1 \over \|z-z_\beta\|_1} & = & \limsup_{z \rightarrow z_\beta} {\sum\limits_{k=1}^q \left|{e^{\beta z_k^{r-1}} \over \sum\limits_{j=1}^q e^{\beta z_j^{r-1}}} -{1 \over q} \right| \over \sum\limits_{k=1}^q \left|z_k -{1 \over q} \right|} \nonumber\\
& = & \lim_{z' \rightarrow 0} \frac{ \sum\limits_{k=1}^q \left|{\beta (r-1) \left({1 \over q} \right)^{r-2} z'_k +O\big((z'_1)^2+\hdots+(z'_q)^2\big) \over q+O\big((z'_1)^2+\hdots+(z'_q)^2\big)  }  \right| }{ \sum\limits_{k=1}^q \left|z'_k  \right| } \nonumber\\
&= & \frac{\beta (r-1) \left({1 \over q} \right)^{r-2}}{q}.
\eea
In \cite{CDLLPS}, it was shown that for $r=2$, $~\beta < \beta_s(q)< \beta_c(q) < q$.  Therefore, the last expression above is less than $1$ and we conclude that 
\[ \limsup_{z \rightarrow z_\beta} {\|g^r(z)-g^r(z_\beta)\|_1 \over \|z-z_\beta\|_1} < 1. \qedhere \]
\ep

\medskip
\noindent
{\it Proof of Lemma \ref{cwp1}.} First, we prove that the family of straight lines connecting to the equilibrium point $z_\beta=\left(1/q,\hdots,1/q\right)$ is a neo-geodesic family as it was defined following Condition \ref{uniform}.  Specifically, for any $z = (z_1, z_2, \ldots, z_q) \in \mathcal{P}$ define the line path $\rho$ connecting $z$ to $z_\beta$ by 
\be
\label{eqn:linepath2} 
z(t) = {1 \over q} (1-t) + z \, t, \hsp 0 \leq t \leq 1 
\ee

\noindent
Then, along this straight-line path $\rho$, the aggregate $g$-variation has the form
\[ D_\rho^g (z,z_\beta) := \sum\limits_{k=1}^q\int\limits_\rho \Big| \Big<\nabla g_k^r (y), dy \Big> \Big| =  \sum\limits_{k=1}^q \int_0^1 \left| \frac{d}{dt} [g_k^r (z(t))] \right| \, dt \]

\medskip
\noindent
Next, for all $k = 1, 2, \ldots, q$ and $t \in [0,1]$, denote 
\[ z(t)_k = {1 \over q} (1-t) + z_k t \]
Then
\be
\label{eqn:coordg2} 
g_k^r(z(t)) = \frac{e^{\beta \big((1/q)(1-t) + z_k t \big)^{r-1}}}{\sum_{j=1}^q e^{\beta \big((1/q) (1-t) + z_j t \big)^{r-1}}}  
\ee
and 
\be
\label{eqn:dg}
\frac{d}{dt} \big[g_k^r(z(t)) \big] = \beta (r-1) g_k^r (z(t)) \Big[ \left(\frac{1}{q} (1-t) + z_k t \right)^{r-2} \left(z_k- \frac{1}{q} \right) - \langle z-z_\beta, g^r(z(t)) \rangle_\rho \Big] 
\ee
where $\langle z-z_\beta, g^r(z(t)) \rangle_\rho$ is the weighted inner product
\[ \langle z-z_\beta, g^r(z(t)) \rangle_\rho := \sum_{j=1}^q g_j^r(z(t)) \left(z_k- \frac{1}{q} \right)  \left(\frac{1}{q} (1-t) + z_k t \right)^{r-2} \]

\medskip
\noindent
Now, observe that for $z(t)$ as in (\ref{eqn:linepath2}) with $z \not= z_\beta$, the inner product $\langle (z-z_\beta), g^r(z(t)) \rangle_\rho$ is monotonically increasing in $t$ since
\[  \frac{d}{dt} \langle z-z_\beta, g^r(z(t)) \rangle_\rho \geq \beta (r-1) \, \mbox{Var}_{g^r}\left( \left(z_k-\frac{1}{q} \right) \left(\frac{1}{q} (1-t) + z_j t  \right)^{r-1} \right) > 0 \]
where $\mbox{Var}_{g^r}(\cdot)$ is the variance with respect to $g^r$.

So $\langle z-z_\beta, g^r(z(t)) \rangle_\rho$ begins at $\langle z-z_\beta, g^r(z(0)) \rangle_\rho =\langle z-z_\beta, z_\beta \rangle =0$ and increases for all $t \in (0,1)$.

\bigskip
\noindent
The above monotonicity yields the following claim about the behavior of $g_k^r(z(t))$ along the straight-line path $\rho$.

\begin{itemize}
\item[(a)] If $z_k \leq 1/q$, then $g_k^r(z(t))$ is monotonically decreasing in $t$.

\item[(b)] If $z_k > 1/q$, then $g_k^r(z(t))$ has at most one critical point $t_k^\ast$ on $(0,1)$.
\end{itemize}

\noindent
The above claim (a) follows immediately from (\ref{eqn:dg}) as $\langle z-z_\beta, g^r(z(t)) \rangle_\rho >0$ for $t>0$. Claim (b) also follows from (\ref{eqn:dg}) as its right-hand side, $z_k-1/q>0$ and $\langle z-z_\beta, g^r(z(t)) \rangle_\rho$ is increasing. Thus there is at most one point $t_k^\ast$ on $(0,1)$ such that $~\frac{d}{dt} \big[g_k^r(z(t)) \big] =0$.

\bigskip
\noi
Next, define 
\[ A_z = \{ k : z_k > 1/q \} \]
Then the aggregate $g$-variation can be split into 
\[ D_\rho^g (z,z_\beta) = \sum_{k \in A_z} \int_0^1 \left| \frac{d}{dt} [g_k^r(z(t))] \right| \, dt + \sum_{k \notin A_z} \int_0^1 \left| \frac{d}{dt} [g_k^r(z(t))] \right| \, dt \]
For $k \notin A_z$, claims (a) and (b) imply
\[ \int_0^1 \left| \frac{d}{dt} [g_k^r(z(t))] \right| \, dt = - \int_0^1 \frac{d}{dt} [g_k^r(z(t))] \, dt = g_k^r(z(0)) - g_k^r(z(1)) = \frac{1}{q} - g_k^r(z) \]
For $k \in A_z$, let $t_k = \max\{ t_k^\ast, 1\}$ ,where $t_k^\ast$ is defined in (b).  Then, we have 
\[ \int_0^1 \left| \frac{d}{dt} [g_k^r(z(t))] \right| \, dt = \int_0^{t_k^\ast} \frac{d}{dt} [g_k^r(z(t))] \, dt - \int_{t_k^\ast}^1 \frac{d}{dt} [g_k^r(z(t))] \, dt = 2 g_k^r(z(t_k^\ast)) - g_k^r(z) - \frac{1}{q} \]
Combining the previous two displays, we get
\beas
D_\rho^g (z,z_\beta) & = & \sum_{k \in A} \left( 2 g_k^r(z(t_k^\ast)) - g_k^r(z) - \frac{1}{q} \right) + \sum_{k \notin A} \left(\frac{1}{q} - g_k^r(z)  \right) \\
& = & 2 \sum_{k \in A} \left( g_k^r(z(t_k^\ast)) - \frac{1}{q} \right)
\eeas
Since $\beta < \beta_s$ and $k \in A_z$, we have 
\[ g_k^r(z(t_k^\ast)) < z(t_k^\ast)_k \leq z(1)_k = z_k \]
and we conclude that 
\[ D_\rho^g (z,z_\beta) < 2 \sum_{k \in A} \left( z_k - \frac{1}{q} \right) = \| z - z_\beta \|_1 \]
Thus
$${d_g(z,z_\beta) \over \| z - z_\beta \|_1} \leq {D_\rho^g (z,z_\beta) \over \| z - z_\beta \|_1} <1 \quad \text { for all } z \not=z_\beta \text{ in } \mathcal{P}.$$

\bigskip
\noi
Next, since we are dealing with the straight line segments $\rho$,
$$\limsup_{z \rightarrow z_\beta}{D_\rho^g (z,z_\beta) \over \| z - z_\beta \|_1}=\limsup_{z \rightarrow z_\beta} {\|g(z)-g(z_\beta)\|_1 \over \|z-z_\beta\|_1} <1$$
by (\ref{zero}), the Mean Value Theorem, and $H(z) \in \mathcal{C}^3$. This, in turn, guarantees the continuity required for Condition \ref{uniform}:
$$\limsup_{z \rightarrow z_\beta}{d_g(z,z_\beta) \over \| z - z_\beta \|_1} \leq \limsup_{z \rightarrow z_\beta}{D_\rho^g (z,z_\beta) \over \| z - z_\beta \|_1}<1$$
Thus Condition \ref{uniform} is proved for the CWP model. Moreover this proves that the family of straight line segments $\rho$ is a neo-geodesic family (see definition following Condition \ref{uniform}). Indeed, there is $\delta \in (0,1)$ such that 
$$\left\{\rho:~z(t)={1 \over q}(1-t)+zt, ~~z \in \mathcal{P} \right\} \quad \text{ is a } {\bf NG}_\delta \text{ family of smooth curves,}$$
 i.e. $\forall z\not= z_\beta$ in $\mathcal{P}$, and corresponding $\rho:~z(t)={1 \over q}(1-t)+zt$,
$$ {D_\rho^g (z,z_\beta) \over \| z - z_\beta \|_1} \leq 1-\delta/2 \qquad$$

\bigskip
\noi
Since the family of straight line segments $\rho$ is a neo-geodesic family ${\bf NG}_\delta$, the integrals
$$ D_\rho^g (x, z) := \sum\limits_{k=1}^q\int\limits_{\rho} \Big| \Big<\nabla g_k^r(y), dy \Big> \Big| $$
can be uniformly approximated by the corresponding  Riemann sums of small enough step size by the Mean Value Theorem as $H(z) \in \mathcal{C}^3$ and therefore each $g_k^r(z) \in \mathcal{C}^2$.
That is, there exists a constant $C>0$ that depends on the second partial derivatives of $g^r(z)=\big(g_1^r(z),\hdots,g_q^r(z)\big)$, such that for $\varepsilon>0$ small enough, the curve  $\rho={1 \over q}(1-t)+zt$ in the family  ${\bf NG}_\delta$ that connects $z_\beta$ to $z$ satisfies
$$\left| \sum_{k=1}^q  \sum_{i=1}^r \Big| \Big<z_i - z_{i-1}, \nabla g_k^r(z_{i-1}) \Big> \Big|-D_\rho^g (z,z_\beta) \right| < C r \ve^2
\qquad \forall z \in \mathcal{P} \text{ s.t. } \|z-z_\beta\|_1 \geq \varepsilon $$
for a sequence of points $z_0=z_\beta,z_1,\hdots,z_r=z \in \mathcal{P}$ interpolating $\rho$ such that
$$\varepsilon \leq \|z_i - z_{i-1}\|_1  < 2\varepsilon \quad \text{ for } i=1,2,\hdots, r.$$
Hence
$${\sum\limits_{k=1}^q  \sum\limits_{i=1}^r \Big| \Big<z_i - z_{i-1}, \nabla g_k^r(z_{i-1}) \Big> \Big| \over \|z-z_\beta\|_1} \leq 1-\delta/2+C\ve \leq 1-\delta/3$$
for $\ve \leq \delta/(6C)$. This concludes the proof of Condition \ref{RS}. \hfill $\square$

\newpage



\bibliographystyle{amsplain}

\end{document}